\numberwithin{equation}{section} 
\newtheorem{Theorem}{Theorem}[section]
\newtheorem{Corollary}[Theorem]{Corollary}
\newtheorem{Lemma}[Theorem]{Lemma}
\newtheorem{Proposition}[Theorem]{Proposition}
\theoremstyle{remark}
\newtheorem{rmk}{Remark}[section]
\theoremstyle{definition}
\renewcommand{\tilde}{\widetilde}
\renewcommand{\hat}{\widehat}
\newcommand{\nn}{\nonumber}
\newcommand{\R}{{\mathbb R}}
\newcommand{\N}{{\mathbb N}}
\newcommand{\Grad}{\nabla_{\!x}}
\newcommand{\del}{\partial}
\newcommand{\dx}{ \, {\rm d} x}
\newcommand{\dy}{ \, {\rm d} y}
\newcommand{\dz}{ \, {\rm d} z}
\newcommand{\dt}{ \, {\rm d} t}
\newcommand{\dv}{ \, {\rm d} v}
\newcommand{\dxi}{\, {\rm d} \xi}
\newcommand{\dtheta}{\, {\rm d} \Vtheta}
\newcommand{\dtau}{\, {\rm d} \tau}
\newcommand{\One}{\boldsymbol{1}}
\newcommand{\La}{\left\langle}
\newcommand{\Ra}{\right\rangle}
\newcommand{\Ss}{{\mathbb{S}}}
\newcommand{\Disp}{\displaystyle}
\newcommand{\Denote}{\stackrel{\triangle}{=}}
\newcommand{\CalC}{{\mathcal{C}}}
\newcommand{\CalF}{{\mathcal{F}}}
\newcommand{\CalJ}{{\mathcal{J}}}
\newcommand{\CalK}{{\mathcal{K}}}
\newcommand{\CalL}{{\mathcal{I}}}
\newcommand{\CalR}{{\mathcal{R}}}
\newcommand{\CalS}{{\mathcal{S}}}
\newcommand{\Vtheta}{{\theta}}
\newcommand{\Jac}{J}
\newcommand{\UJ}{u_{\mathcal{J}}}
\newcommand{\WJ}{w_{\mathcal{J}}}
\newcommand{\WJD}{w_{\mathcal{J}, \delta}}
\newcommand{\WJL}{w_{\mathcal{J}, \lambda}^+}
\newcommand{\abs}[1]{\left\lvert#1\right\rvert}
\newcommand{\norm}[1]{\left\lVert#1\right\rVert}
\newcommand{\vint}[1]{\langle #1 \rangle}
\newcommand{\vpran}[1]{\left( #1 \right)}
\date{\today}
\title[RTE in the forward-peaked regime]{The radiative transfer equation in the forward-peaked regime}
\author{Ricardo Alonso}
\address{R. Alonso -- Departamento de Matem\'{a}tica, PUC-Rio, Rua Marqu\^{e}s de S\~{a}o Vicente 225, Rio de Janeiro, CEP 22451-900, Brazil. }
\email{ralonso@mat.puc-rio.br}
\author{Weiran Sun}
\address{W. Sun -- Department of Mathematics, Simon Fraser University, 8888 University Drive, Burnaby, BC V5A 1S6.}
\email{weirans@sfu.ca}
\keywords{Radiative transfer equation, highly forward-peaked regime, diffusion limit, Laplace-Beltrami fractional diffusion, instantaneous regularization, averaging lemma, Henyey-Greenstein scattering}
\begin{document}
\maketitle
\begin{abstract}
In this work we study the radiative transfer equation in the forward-peaked regime in free space. Specifically, it is shown that the equation is well-posed by proving instantaneous regularization of weak solutions for arbitrary initial datum in $L^{1}$.  Classical techniques for hypo-elliptic operators such as averaging lemma are used in the argument.  Among the interesting aspect of the proof are the use of the stereographic projection and the presentation of a rigorous expression for the scattering operator given in terms of a fractional Laplace-Beltrami operator on the sphere, or equivalently, a weighted fractional Laplacian analog in the projected plane.  Such representations may be used for accurate numerical simulations of the model.  As a bonus given by the methodology, we show convergence of Henyey-Greenstein scattering models and vanishing of the solution at time algebraic rate due to scattering diffusion.    
\end{abstract}
\section{Introduction}
\subsection{Radiative transfer equation and the highly forward-peaked regime}
Radiative transfer is the physical phenomenon of energy transfer in the form of electromagnetic radiation. The propagation of radiation through a medium is described by absorption, emission, and scattering processes.  In the case that the medium is free of absorption and emission the radiative transfer equation (RTE) in free space reduces to  
\begin{equation}\label{rte1}
\left\{
\begin{array}{cl}
 \del_t u + \theta\cdot\nabla_{x} u
= \CalL(u) \,, & \text{in}\;\; (0,T)\times\mathbb{R}^{d} \times\mathbb{S}^{d-1}
\\
   u  =  u_o \,, & \text{on}\;\; \{t=0\}\times\mathbb{R}^{d} \times\mathbb{S}^{d-1}\,,
\end{array}\right.
\end{equation}
where $u=u(t,x,\theta)$ is understood as the radiation distribution in the free space $(0,T)\times\mathbb{R}^{d}\times\mathbb{S}^{d-1}$ where $\mathbb{S}^{d-1}$ stands for the unit sphere in $\mathbb{R}^{d}$.  The initial radiation distribution is assumed nonnegative and $u_o\in L^{1}\big(\mathbb{R}^{d}\times\mathbb{S}^{d-1}\big)$.  The scattering operator is global only in the radiation propagation direction $\theta$, specifically, it reads simply as
\begin{equation}\label{rte2}
\mathcal{I}(u):=\mathcal{I}_{b_{s}}(u)=\int_{\mathbb{S}^{d-1}}\big(u(\theta')-u(\theta)\big)b_s(\theta,\theta')\text{d}\theta'.
\end{equation} 
It is commonly assumed that the angular scattering kernel has the symmetry $0\leq b_s(\theta,\theta')=b_{s}(-\theta',-\theta)$ due to micro--irreversibility and has the normalized integrability condition
\begin{equation}\label{skca}
1=\int_{\mathbb{S}^{d-1}}b_s(\theta,\theta')\text{d}\theta'=\int_{\mathbb{S}^{d-1}}b_s(\theta',\theta)\text{d}\theta'.
\end{equation}
For detailed presentations of the mathematical theory of linear transport equation with the classical assumption \eqref{skca} refer to \cite[Chapter XXI]{DL}.  In this work we are interested in a different regime of propagation called \textit{highly forward-peaked regime} commonly found in neutron transport, atmospheric radiative transfer and optical imaging among others.  Refer to \cite{Bal} for a general discussion of the RTE, including the forward-peaked regime, with application to inverse problems.  In this regime and under precise scaling, see below, the angular scattering kernel is formally approximated by
\begin{equation}\label{skA1}
b_{s}(\theta,\theta')=\frac{ b(\theta\cdot\theta') }{ \big(1- \theta\cdot\theta' \big)^{\frac{d-1}{2}+s} },\quad s\in\big(0,\min\{1,\tfrac{d-1}{2}\}\big)
\end{equation}
where the function $b(z)\geq0$ enjoys some smoothness in the vicinity of $z=1$.  More precisely, in the sequel we will consider its decomposition as
\begin{equation}\label{skA2}
   b(z) = b(1) + \tilde{b}(z)\,, \quad\text{where}
\quad 
   h(z) = \frac{\tilde{b}(z)}{(1-z)^{1+s}}\in L^{1}(-1,1)\,.
\end{equation}
For instance, some H\"{o}lder continuity in the vicinity of $z=1$ will suffice for $\tilde{b}(z)$.  This decomposition is commonly used to separate the peaked regime scattering from others such as Rayleigh.  In scattering physics literature it is common to use the Henyey-Greenstein angular scattering kernel (also called phase function), first introduced in \cite{HG}, which for $d=3$ reads 
\begin{equation*}
b^{g}_{HG}(\theta,\theta')=\frac{1-g^{2}}{\big(1+g^{2}-2\,g\,\theta\cdot\theta'\big)^{ \frac{3}{2} } } \,,
\end{equation*}
where the anisotropic factor $g\in(0,1)$ measures the strength of forward-peakedness of the scattering kernel. For example, typical values for this factor in animal tissues are in the range $0.9\leq g \leq 0.99$, in such a case the regime is referred as \textit{highly peaked}.  Therefore, the model \eqref{skA1} can be viewed (but not restricted) as the limit $g\rightarrow1$ of Henyey-Greenstein scattering type (with $s=\frac{1}{2}$) after proper rescaling.  Indeed, assume that $u_{HG}$ is the solution of radiative transfer \eqref{rte1} with initial condition $u^{o}_{HG}$ and with the Henyey-Greenstein phase function.  Define the rescaled function $u^{g}$ as
\begin{equation*}
u^{g}(t,x,\theta) = \tfrac{1}{(1-g)^{d}}\,u_{HG}\big(\tfrac{t}{1-g},\tfrac{x}{1-g},\theta\big)\,,
\end{equation*}
where the time-space variables $(t,x)$ are order one quantities.  Thus, this rescaling is introduced in order to observe large spatial-time dynamics (of the original problem) so that the highly forward-peaked scattering has a visible effect.  It can be interpreted as a diffusive scaling of the type given in \cite{LK} for propagation regimes with a small mean free path.  Note that the factor $1/(1-g)^{d}$ is necessary to conserve the solution's mass.  A simple computation shows that $u^{g}$ solves the radiative transfer equation \eqref{rte1} with phase function given by
\begin{equation*}
b^{g}_{s}(\theta,\theta') = \frac{b^{g}_{HG}(\theta,\theta')}{1-g}=\frac{1+g}{\big(1+g^{2}-2\,g\,\theta\cdot\theta'\big)^{ \frac{3}{2} } } \underset{g\rightarrow1}{\longrightarrow} \frac{1}{\sqrt{2}\,\big(1-\theta\cdot\theta'\big)^{ \frac{3}{2} } }\,,
\end{equation*}
\medskip
and initial condition $u^{g}_o=u^{o}_{HG}$.  Therefore, it is expected that in some suitable sense the asymptotic limit $u=\lim_{g\rightarrow 1}u^{g}$ is given by a radiation distribution $u$ that solves \eqref{rte1} with phase function \eqref{skA1} as long as the rescaled initial condition converges towards $u_o$.  Such asymptotic limits are usually referred as Fokker Planck approximations since the distribution $u$ solves essentially a Fokker-Planck equation, see for example the references \cite{P}, \cite{LL}, \cite{Bal} and \cite{QW} which present instances of this approach.  It was noticed in \cite{P} that the sequence of solutions $u_{HG}$ cannot converge (as $g\rightarrow1$) to the solution of a Fokker-Planck equation, therefore, in principle some diffusion scaling that depends on the propagation regime is necessary for this to happen.  In fact, we will show in this work that the limiting scattering mechanism is not given by a Laplace Beltrami operator in the sphere but rather a \textit{fractional} Laplace Beltrami operator.  Thus, in the case of the classical Henyey-Greenstein scattering, Our work provides a rigorous justification of the asymptotic analysis in~\cite{P}. It also shows that a more precise name for the asymptotic limit would be ``fractional Fokker Planck approximation''.  Independent of the name used for the approximation, the important underlying issue is that using a standard Fokker-Planck equation may not be entirely appropriate for the correct modeling of the highly forward-peaked regime.\\

\noindent
Observe that assumptions \eqref{skA1} and \eqref{skA2} imply that  
\begin{equation*}
     \int_{\Ss^{d-1}} b_s(\theta,\theta') \text{d}\theta' = +\infty 
\qquad
    \text{for any $s > 0$} \,,
\end{equation*}
therefore, the operator $\mathcal{I}$ is not well defined unless the radiation distribution $u$ enjoys sufficient regularity, say having two continuous derivatives in the variable $\theta$.  Such regularity needs to be proven for solutions of the radiative transfer equation \eqref{rte1} in the highly forward-peaked regime.  Consequently, the interaction operator is defined using the weak formulation:  For any sufficiently regular functions $u$ and $\psi$
\begin{align}\label{wf}
\int_{\mathbb{S}^{d-1}}\mathcal{I}(u)(\theta)\,\psi(\theta)\,\text{d}\theta :&= -\tfrac{1}{2}\int_{\mathbb{S}^{d-1}}\int_{\mathbb{S}^{d-1}} \big(u(\theta') - u(\theta)\big) \big(\psi(\theta') - \psi(\theta) \big)\,b_{s}(\theta,\theta') \text{d}\theta' \text{d}\theta\,\nonumber\\
&=\lim_{\epsilon\rightarrow0}\int_{\mathbb{S}^{d-1}}u(\theta)\int_{\{1-\theta\cdot\theta'\geq\epsilon\}} \big(\psi(\theta') - \psi(\theta) \big)\,b_{s}(\theta,\theta') \text{d}\theta' \text{d}\theta\,.
\end{align}
Although, we are not yet precise what the space of test functions is, we observe that equations \eqref{wf} is equivalent to the strong formulation \eqref{rte2} for sufficiently regular $u$.  In Proposition \ref{main:smooth} a explicit expression in terms of the fractional laplacian will be given.
\subsection{Definition of solution, results and organization of the proof} Let the function $u_o\in L^{1}_{x,\theta}$ \footnote{The shorthand $L^{1}_{x,\theta}$ denoting $L^{1}(\mathbb{R}^{d}\times\mathbb{S}^{d-1})$ and its equivalent to other Lebesgue and Sobolev spaces will be used extensively along the paper.} be a nonnegative initial state and $T>0$ be an arbitrary time.  A nonnegative function
\begin{equation*}
u\in L^{\infty}\big([0,T);L^{1}_{x,\theta}\big)\cap \CalC\big([0,T);L^{1}_{loc}\big)
\end{equation*}
is a solution of the RTE in the (highly) forward peaked regime with initial condition $u_o$ provided that
\begin{equation*}
\partial_{t}u\,,\,\nabla_{x}u, \, \CalL_{b_s}(u) \in L^{2}\big([t_o,T); L^{2}_{x,\theta} \big) \;\quad \forall\, t_o>0\,,
\end{equation*}
and $u$ solves the RTE equation a.e.
\begin{equation}\label{DWS}
\left\{
\begin{array}{cl}
    \del_t u + \theta\cdot\nabla_{x} u
= \CalL_{b_s}(u) \,, & \text{in}\;\; (0,T)\times\mathbb{R}^{d} \times\mathbb{S}^{d-1}
\\
   u  =  u_o \,, & \text{on}\;\; \{t=0\}\times\mathbb{R}^{d} \times\mathbb{S}^{d-1}\,.
\end{array}\right.
\end{equation}
Let us state the results in one theorem.  The detailed statement of the results with precise estimates and spaces can be found in Sections 4 and 5.
\begin{Theorem}\label{TMain}
(1) (Stability and existence of solutions) Consider a sufficiently regular nonnegative initial state $u_o\in L^{2}_{x,\theta}$ and let $\{u^{g}\}_{g\geq0}$ a sequence of rescaled solutions of the RTE with Henyey-Greenstein kernels having such initial state.  Then,  $\{u^{g}\}_{g\geq0}$ converges weakly in $L^{2}\big([0,T);L^{2}_{x,\theta}\big)$ as $g\rightarrow1$ to the unique solution $u\geq0$ of the RTE in the forward-peaked regime having initial condition $u_o$.\\
(2) (Existence of solution for general initial state) Consider a nonnegative initial state $u_o \in L^{1}_{x,\theta}$.  Then, the RTE in the forward-peaked regime has a unique smooth solution $u\geq0$.  Furthermore, all higher norms of $u$ are controlled exclusively in terms of $m_o$, the mass of $u_o$, for any positive time.\\
(3) (Time asymptotic vanishing) Consider a nonnegative initial state $u_o \in L^{1}_{x,\theta}$.  Then, the solution $u$ of the RTE in the forward-peaked regime satisfies for any $t>0$
\begin{equation*}
u(t)\leq \frac{C(m_o)}{t^{a}}\,,\quad \text{for some universal }\; a\geq \tfrac{1}{2}\,,
\end{equation*}
and constant $C(m_o)$ depending only on the initial mass $m_o$.
\end{Theorem}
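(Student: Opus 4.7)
The plan is to combine the instantaneous regularization from part~(2) with mass conservation and a dispersive averaging argument to extract the algebraic time decay.  Since part~(2) already yields that, for any $t_{0}>0$, all higher norms of $u(t_{0})$ are controlled purely in terms of $m_o$, the bound $\|u(t)\|_{L^{\infty}_{x,\theta}}\leq C(m_o)$ is automatic on compact subsets of $(0,\infty)$, and by the same smoothing it is enough to prove decay for smooth, bounded data on $t\geq 1$, treating $u(1)$ as a new initial datum.

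First I would derive the $L^{2}$ energy-dissipation identity
\begin{equation*}
\frac{1}{2}\,\frac{d}{dt}\|u(t)\|_{L^{2}_{x,\theta}}^{2} = \int_{\mathbb{R}^{d}\times\mathbb{S}^{d-1}} u\,\mathcal{I}_{b_{s}}(u)\,\mathrm{d} x\,\mathrm{d}\theta \leq -\,c\,[u(t)]_{s,\theta}^{2}\,,
\end{equation*}
where $[\,\cdot\,]_{s,\theta}$ is the fractional angular seminorm produced by the weak formulation~\eqref{wf} of $\mathcal{I}_{b_{s}}$.  Applying the averaging lemma techniques already used in earlier sections to the equation $\partial_{t}u+\theta\cdot\nabla_{x}u=\mathcal{I}_{b_{s}}(u)$, this fractional angular regularity transfers to a positive order of Sobolev regularity in $(t,x)$ for velocity averages $\rho_{\psi}(t,x)=\int_{\mathbb{S}^{d-1}}u\,\psi\,\mathrm{d}\theta$, quantitatively controlled by $\|u\|_{L^{2}}$ and $\|\mathcal{I}_{b_{s}}(u)\|_{L^{2}}$.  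Combined with mass conservation $\|u(t)\|_{L^{1}_{x,\theta}}=m_o$ and the dispersive structure of free transport on $\mathbb{R}^{d}$, this forces $\|u(t)\|_{L^{2}_{x,\theta}}$ to decay at an algebraic rate in $t$.

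Once $L^{2}$-decay is in hand, I would upgrade it to $L^{\infty}$-decay by a Moser-type iteration analogous to the scheme of part~(2), interpolating at each step against the conserved $L^{1}$-norm.  This yields $\|u(t)\|_{L^{\infty}_{x,\theta}}\leq C(m_o)\,t^{-a}$ with a universal exponent $a>0$; the bound $a\geq\tfrac{1}{2}$ is expected to emerge by tracking the exponents in the averaging--dispersion estimate across the admissible range $s\in(0,\min\{1,(d-1)/2\})$.

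The main technical obstacle is quantifying the dispersion when the only smoothing mechanism available is a weak, fractional angular diffusion: the averaging-lemma gain is strictly weaker than in the classical elliptic-scattering setting, so the fractional order $s$ must be tracked carefully through the dispersion estimate both to certify that a genuine algebraic decay results (rather than a merely bounded-in-time estimate) and to verify that the resulting exponent is always at least $\tfrac{1}{2}$.
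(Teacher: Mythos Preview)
Your proposal diverges from the paper and has a genuine gap at the very step you flag as the main obstacle.

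The paper (Section~6) does \emph{not} first establish $L^{2}$-decay and then upgrade via a Moser iteration.  It runs a De~Giorgi level-set scheme \`a la Caffarelli--Vasseur: for $u_{\lambda}=(u-\lambda)_{+}$ one has the level-set energy inequality $\partial_{t}\|u_{\lambda}\|_{L^{2}}^{2}+D_{0}\|(-\Delta_{v})^{s/2}w_{\mathcal{J},\lambda}^{+}\|_{L^{2}}^{2}\leq D_{1}\|u_{\lambda}\|_{L^{2}}^{2}$, and the weak averaging lemma is applied to the equation $\partial_{t}u_{\lambda}+\theta\cdot\nabla_{x}u_{\lambda}=\mathbf{1}_{\{u>\lambda\}}\,\mathcal{I}(u)$ to gain $H^{\beta}_{x}$ regularity on $\rho_{\lambda}$.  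The right-hand side is nonlinear in $u$, and bounding it in $L^{2}$ with a useful prefactor requires $\mathcal{I}(u)\in L^{\infty}_{t,x,\theta}$; this is where the full regularity machinery of Section~4 (Proposition~\ref{prop:L-infty-I}) enters in an essential way.  Iterating over dyadic levels $\lambda_{k}=M(1-2^{-k})$ and times $t_{k}$ yields a nonlinear recursion $U_{k}\leq C\,2^{Ck}M^{-\nu}\,U_{k-1}^{1+\epsilon}$; optimising the resulting $L^{\infty}$-bound $M$ against $T$, using only the interpolation $\|u\|_{L^{2}}^{2}\leq m_{o}\,M$ with the conserved mass, produces the explicit exponent $a=(\mu_{1}-\mu_{2})/(1-\mu_{2})$ and the algebraic check that $a\geq\tfrac12$.

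The step in your outline that would not go through is the claim that ``mass conservation and the dispersive structure of free transport on $\mathbb{R}^{d}$'' force $\|u(t)\|_{L^{2}_{x,\theta}}$ to decay.  Free transport on $\mathbb{R}^{d}\times\mathbb{S}^{d-1}$ is measure-preserving and leaves every $L^{p}_{x,\theta}$-norm invariant; there is no dispersive decay of $u$ itself.  The Nash-type interpolation that underlies the $L^{1}\!\to\!L^{2}$ smoothing (Propositions~\ref{prop:interpolation}--\ref{prop:L-1-L-2}) gives, over a unit time window, only $\|u(t_{0}+1)\|_{L^{2}}^{2\omega}\leq c(m_{o})\,\|u(t_{0})\|_{L^{2}}^{2}$; iterating this in $t_{0}$ does not drive $\|u\|_{L^{2}}$ to zero but only toward the fixed point $c(m_{o})^{1/(\omega-1)}$.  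So an independent large-time $L^{2}$-decay mechanism is missing, and without it the Moser upgrade---which would in any case require a Stroock--Varopoulos inequality for the spherical fractional operator $\mathcal{I}_{b_{s}}$ that is nowhere developed---has nothing to build on.  The self-consistent closing of the loop between $M$ and $U_{0}$ in the De~Giorgi argument is precisely what circumvents this difficulty.
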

\noindent
Theorem \ref{TMain} is proved in Sections 4 (items (1) and (2)) and Section 5 (item (3)).  The proof is based on Section 3 where all the \textit{a priori} estimates are worked out.  Section 2 is of independent importance and contains the averaging lemmas that propagate regularity from the angular variable to the spatial variable.  More precisely, the proof follows the following argument:  Assume existence of a solution (as defined previously) for the RTE in the peaked regime.  For such solution, the main energy estimate \eqref{MEE2} is valid.  Such estimate essentially points out that higher angular Sobolev regularity is controlled in terms solely of the $L^{2}_{x,\theta}$-norm of the solution.  Although, the control of a higher \textit{spatial} Sobolev norm is not explicit in estimate \eqref{MEE2}, it is possible to propagate a fraction of such angular regularity to the spatial variable using hypoelliptic methods.  In particular, we choose to follow in Section 2 a flexible and powerful technique based on the so-called average lemmas, see \cite{FB} and \cite{FD} for a complete discussion and an extensive list of references in the topic.  This section ends with Corollary \ref{Cor:strong-reg-avg-time} which states precisely this fact.  In Section 3, a classical technique in parabolic PDE theory is used, namely, to show successively improved regularity in the solution starting from the lowest conserved quantity, in this case the $L^{1}_{x,\theta}$-norm, we refer to \cite{CH} to observe such technique in the context of nonlinear integral equations.  Thus, Section 3 starts proving the basic control of the $L^{2}_{x,\theta}$-norm in terms of the $L^{1}_{x,\theta}$-norm.  Such result only requires a standard version of the average lemma given in Proposition \ref{Lemma:VA-Weak}.  Improvement of regularity, involving Sobolev norms in both space and angle, is done in Proposition \ref{prop:infty-reg} by differentiating successively the equation and arguing by induction.  The initial step of the induction is given by the strong form of the average lemma proved in Theorem \ref{st-reg-lemma}.  All the results up to Section 3 are valid assumed the existence of a solution, thus, Section 4 is dedicated to show the existence of such solution.  To this end, the RTE in the peaked regime is approximated using the physical model, namely, the rescaled RTE with Henyey-Greenstein type of scattering.  Of course, it is possible to approximate the RTE in the forward peaked regime in many ways (including simpler ones), we choose the  Henyey-Greenstein type for its physical relevance.  Uniform estimates, in the anisotropic coefficient $g$, for the approximating solutions allow to show that such a sequence of solutions indeed converge to a solution of the RTE in the forward peaked regime, see Proposition \ref{l1EUS}.  Item (1) is proved in Proposition \ref{p1EUS} and item (2) is proved in Theorem \ref{t2EUS}.  Finally, in Section 5 a classical technique in elliptic and parabolic theory to obtain improved regularity by studying the level sets of the solution is used, an excellent reference for this topic is \cite{CafVas}.  Interestingly, such technique is borrowed in the present case to obtain a vanishing algebraic rate of the max-norm of the solution as described in item (4).  This result is proof of the diffusive nature of the scattering in the forward-peaked regime.

\section{Basic Properties of the Scattering Operator and Function Spaces}

In this section we show some basic properties of the scattering operator $\CalL$. These properties are fundamental to the analysis in this paper. They also motivate the function spaces that we will work within. 

\subsection{Stereographic projection and the representation of the projected scattering operator}
The results given in this work can be stated  transparently employing the stereographic projection $\mathcal{S}:\mathbb{S}^{d-1} \rightarrow \mathbb{R}^{d-1}$.  Using subscripts to denote the coordinates of a vector, we can write the stereographic projection as
\begin{equation*}
     \CalS(\theta)_{i} 
  = \frac{\theta_{i} }{ 1- \theta_{d} }\,,
\qquad 1 \leq i \leq d-1 \,. 
\end{equation*}
The stereographic projection is surjective and smooth (except in the north pole) with its inverse $\mathcal{J}:\mathbb{R}^{d-1}\rightarrow\mathbb{S}^{d-1}$ given by
\begin{equation*}
    \CalJ(v)_{i}  
 = \frac{ 2v_{i} }{\vint{v}^2}\,, \quad 1 \leq i \leq d-1\,, 
\quad\text{and} \quad    
     \CalJ(v)_{d}  = \frac{|v|^{2} - 1}{\langle v \rangle^{2}}\,,
\end{equation*}
where $\vint{v}:= \sqrt{1+ |v|^{2}}$.  The Jacobian of such transformations can be computed respectively as
\begin{equation*}
    \dv = \frac{\dtheta}{(1- \theta_{d})^{d-1}}\,, \;\;\text{and}\;\; 
    \dtheta = \frac{2^{d-1}\,\dv}{\vint{v}^{2(d-1)}}  \,.
\end{equation*}
Additionally, using the shorthanded notation $\theta = \CalJ(v)$ and $\theta' = \CalJ(v')$, one can show by simple algebra that
\begin{equation}\label{desp}
     1 - \theta \cdot \theta' 
 = 2\,\frac{\abs{v - v'}^{2}}{\vint{v}^2 \vint{v'}^2} \,.
\end{equation}
\begin{Proposition}\label{main:smooth}
Let $b_{s}$ be a scattering kernel satisfying \eqref{skA1} and \eqref{skA2} and write $\mathcal{I}_{b_s}=\mathcal{I}_{b(1)} + \mathcal{I}_{h}$.  Then, for any sufficiently regular function $u$ in the sphere the stereographic projection of the operator $\mathcal{I}_{b(1)}$ is given by
\begin{align}
     \frac{\big[\CalL_{b(1)}(u)\big]_\CalJ}{\langle \cdot \rangle^{d-1}} 
  &= \frac{2^{\frac{d-1}{2}-s}\,b(1)}{c_{d-1,s}}\langle v \rangle^{2s}
     \Big(-(-\Delta_{v})^{s}w_{\mathcal{J}} 
             + \UJ\,(-\Delta_{v})^{s}\frac{1}{\vint{\cdot}^{d-1 - 2s}} \Big)\nonumber\\
    &=\frac{2^{\frac{d-1}{2}-s}\,b(1)}{c_{d-1,s}}\langle v \rangle^{2s}
     \Big(-(-\Delta_{v})^{s}w_{\mathcal{J}} 
             + c_{d,s}\,\frac{\UJ}{\langle v \rangle^{d-1 + 2s}} \Big)\,,\label{main:smooth1}     
\end{align}
where $\UJ=u\circ\mathcal{J}$ (the projected function) and $w_{\mathcal{J}} := \frac{ u_{\mathcal{J}}}{ \langle \cdot \rangle^{d-1 - 2s} }$.  In particular, one has the formula
\begin{equation}\label{main:smooth2}
\frac{1}{b(1)}\int \mathcal{I}_{b(1)}(u)(\theta)\,\overline{u(\theta)}\,\dtheta = - \,c_{d,s}\,\big\| (-\Delta_{v})^{s/2}w_{\mathcal{J}}\big\|^{2}_{L^{2}(\mathbb{R}^{d-1})} + C_{d,s}\,\| u \|^{2}_{L^{2}(\mathbb{S}^{d-1})}\,,
\end{equation}
for some explicit positive constants $c_{d,s}$ and $C_{d,s}$ depending on $s$ and $d$.  Furthermore, defining the differential operator $(-\Delta_{\theta})^{s}$ acting on functions defined on the sphere by the formula
\begin{equation}\label{smooth8}
\big[(-\Delta_{\theta})^{s}u\big]_{\mathcal{J}} := \langle \cdot \rangle^{d-1 + 2s}\,(-\Delta_{v})^{s} w_{ \mathcal{J} }\,,
\end{equation}
the scattering operator simply writes as the sum of a singular and a $L^{2}_{\theta}$-bounded parts
\begin{equation}\label{smooth9}
\mathcal{I}_{b_s}= -\,D\,(-\Delta_{\theta})^{s} + c_{s,d}\,\textbf{1} + \mathcal{I}_{h}\,,
\end{equation}
where $D=2^{\frac{d-1}{2} - s}\frac{b(1)}{c_{d-1,s}}$ is the diffusion parameter. 
\end{Proposition}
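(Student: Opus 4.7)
The plan is to pull the scattering integral to the plane $\mathbb{R}^{d-1}$ via $\CalJ$, recognise there the Gagliardo principal-value representation of $(-\Delta_v)^{s}$ acting on a weighted version of $u$, and then repackage the result on the sphere through the definition \eqref{smooth8}.

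First I would substitute $\theta=\CalJ(v)$, $\theta'=\CalJ(v')$ in the defining integral for $\mathcal{I}_{b(1)}(u)(\theta)$; by the chordal identity \eqref{desp} together with the Jacobian $\text{d}\theta'=2^{d-1}\vint{v'}^{-2(d-1)}\text{d}v'$, the singular kernel and measure collapse to $b(1)\,2^{\frac{d-1}{2}-s}\,\vint{v}^{d-1+2s}/(|v-v'|^{d-1+2s}\,\vint{v'}^{d-1-2s})\,\text{d}v'$. Factoring $\UJ(v')=\vint{v'}^{d-1-2s}\WJ(v')$ cancels the $\vint{v'}$-weight in the moving argument, and a single add/subtract of $\WJ(v)$ decomposes the increment as $(\WJ(v')-\WJ(v))+\UJ(v)\,(\vint{v}^{-(d-1-2s)}-\vint{v'}^{-(d-1-2s)})$ divided by $|v-v'|^{d-1+2s}$. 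Each resulting principal-value integral is, by definition, $-c_{d-1,s}^{-1}(-\Delta_v)^{s}$ applied respectively to $\WJ$ and to $\vint{\cdot}^{-(d-1-2s)}$, and dividing by $\vint{v}^{d-1}$ yields the first equality in \eqref{main:smooth1}.

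The second equality reduces to the classical identity
\begin{equation*}
(-\Delta_v)^{s}\,\vint{\cdot}^{-(d-1-2s)}=c_{d,s}\,\vint{\cdot}^{-(d-1+2s)},
\end{equation*}
which I would establish by Fourier analysis on $\mathbb{R}^{d-1}$ applied to the Bessel-type kernel $\vint{\cdot}^{-(d-1-2s)}$ (or, equivalently, by the conformal intertwining between flat and spherical fractional Laplacians), the constant $c_{d,s}$ being read off as a Beta-function integral. For \eqref{main:smooth2}, a clean way is to multiply the just-derived \eqref{main:smooth1} by $\UJ$, integrate against the spherical measure $2^{d-1}\vint{v}^{-2(d-1)}\text{d}v$, and note that the two $\vint{v}$-weights match exactly so that the singular term produces $-\|(-\Delta_v)^{s/2}\WJ\|_{L^{2}(\mathbb{R}^{d-1})}^{2}$ via Plancherel, while the $c_{d,s}$-term produces the bounded $\|u\|_{L^{2}(\mathbb{S}^{d-1})}^{2}$ contribution. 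The same answer emerges symmetrically from the weak form \eqref{wf} with $\psi=u$, which provides a useful consistency check on the constants.

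Once \eqref{main:smooth1} and \eqref{main:smooth2} are in hand, the decomposition \eqref{smooth9} is immediate: definition \eqref{smooth8} merely names the singular part as $(-\Delta_\theta)^{s}$, the $c_{d,s}\UJ/\vint{v}^{d-1+2s}$ piece cancels the outer $\vint{v}^{d-1+2s}$ prefactor to leave a constant multiple of $u$, and the regular piece $\mathcal{I}_{h}$ built from \eqref{skA2} is appended without change since its kernel is already integrable. The main technical obstacle is the principal-value splitting of the first step: neither $(-\Delta_v)^{s}\WJ$ nor $(-\Delta_v)^{s}\vint{\cdot}^{-(d-1-2s)}$ is absolutely integrable at $v'=v$, so I would retain the $\varepsilon$-truncation already built into \eqref{wf}, perform the split on the truncated integrals where every piece is absolutely convergent, and pass $\varepsilon\to 0$ at the end. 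This limit is legitimate under the ``sufficiently regular'' hypothesis on $u$ (say $u\in C^{2}(\mathbb{S}^{d-1})$), which endows $\WJ$ with enough smoothness and decay for both pointwise limits to exist separately.
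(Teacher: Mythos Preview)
Your proposal is correct and follows essentially the same route as the paper: pull $\mathcal{I}_{b(1)}$ to $\mathbb{R}^{d-1}$ via the chordal identity \eqref{desp} and the Jacobian, perform the same add/subtract of $\WJ(v)$ to recognise the two principal-value integrals as $(-\Delta_v)^{s}\WJ$ and $(-\Delta_v)^{s}\vint{\cdot}^{-(d-1-2s)}$, invoke the Bessel-potential identity (proved in the appendix by the Fourier argument you describe) for the second line of \eqref{main:smooth1}, and then integrate against $\overline{u}$ with the matching weights to get \eqref{main:smooth2}. Your explicit mention of retaining the $\varepsilon$-truncation from \eqref{wf} to justify the splitting is a useful technical remark that the paper leaves implicit.
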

\begin{proof}
Given the decomposition of the scattering kernel $b_{s}$ assumed in \eqref{skA2} one certainly can write the scattering operator as $\mathcal{I} = \mathcal{I}_{b(1)} + \mathcal{I}_{h}$.  The operator $\mathcal{I}_{h}$ is a bounded operator in $L^{2}(\mathbb{S}^{d-1})$.  Indeed, assumption~\eqref{skA2} implies that
\begin{equation*}
   \theta' 
\rightarrow 
  h(\theta\cdot\theta')=\frac{\tilde{b}(\theta \cdot \theta')}{(1-\theta \cdot \theta')^{\frac{d-1}{2}+s}}
   \in L^{1}(\Ss^{d-1}) \,.
\end{equation*}
Then, using Cauchy-Schwarz inequality it follows that
\begin{equation}\label{smooth0.5}
\big\| \mathcal{I}_{h}(u) \big\|_{L^{2}(\mathbb{S}^{d-1})} \leq 2\,\big\|h\big\|_{L^{1}(\mathbb{S}^{d-1})}\|u\|_{L^{2}(\mathbb{S}^{d-1})}\,.
\end{equation}
The details can be found in the appendix, Lemma \ref{app:coercive}.  Let us concentrate on the leading term $\mathcal{I}_{b(1)}$ using the Stereographic projection and \eqref{desp}
\begin{align}\label{smooth1}
\big[\mathcal{I}_{b(1)}(u) \big]_{\mathcal{J}}(v)&=2^{\frac{d-1}{2}-s}\,b(1)\,\langle v \rangle^{d-1+2s}\int_{\mathbb{R}^{d-1}}\frac{\UJ(v') - \UJ(v)}{|v-v'|^{d-1+2s}} \frac{\dv'}{\langle v' \rangle^{d-1-2s}}\nonumber\\
&=2^{\frac{d-1}{2}-s}\,b(1)\,\langle v \rangle^{d-1+2s}\Bigg(\int_{\mathbb{R}^{d-1}}\frac{\WJ(v') - \WJ(v)}{|v-v'|^{d-1+2s}} \dv'\nonumber\\
&\hspace{3cm}+\UJ(v)\int_{\mathbb{R}^{d-1} } \frac{ \frac{1}{\langle v \rangle^{d-1-2s}} - \frac{1}{\langle v' \rangle^{d-1-2s} }}{|v-v'|^{d-1+2s}} \dv' \Bigg)\nonumber\\
&= \frac{2^{\frac{d-1}{2}-s}\,b(1)}{c_{d-1,s}}\,\langle v \rangle^{d-1+2s}\Big( -(-\Delta_{v})^{s}\WJ + \UJ(-\Delta_{v})^{s}\frac{1}{\langle \cdot \rangle^{d-1-2s}} \Big)\nonumber\\
&=\frac{2^{\frac{d-1}{2}-s}\,b(1)}{c_{d-1,s}}\langle v \rangle^{2s}
     \Big(-(-\Delta_{v})^{s}w_{\mathcal{J}} 
             + c_{d,s}\,\frac{\UJ}{\langle v \rangle^{d-1 + 2s}} \Big)\,.
\end{align}
For the last inequality we have used Lemma \ref{app:convex} on Bessel potentials in the appendix to find  that 
\begin{equation*}
(-\Delta_{v})^{s}\frac{1}{\langle\cdot\rangle^{d-1 - 2s}}(v) = \frac{c_{d,s}}{\langle v \rangle^{d-1 + 2s}}\,.
\end{equation*} 
This proves \eqref{main:smooth1} and as a direct consequence,
\begin{align}\label{smooth3}
\int_{\mathbb{S}^{d-1}}\mathcal{I}_{b(1)}&(u)(\theta)\, \overline{u(\theta)}\,\dtheta = 2^{d-1}\int_{\mathbb{S}^{d-1}}\Big[\mathcal{I}_{b(1)}(u)\Big]_{\mathcal{J}}(v)\, \overline{\UJ(v)}\,\frac{\dv}{\langle v \rangle^{2(d-1)}}\nonumber\\
& = 2^{\frac{3(d-1)}{2}-s}\,\frac{b(1)}{ c_{d-1,s} }\,\Big[ - \big\|(-\Delta)^{s/2 }w_{\mathcal{J}} \big\|^{2}_{L^{2}(\mathbb{R}^{d-1})} + \frac{c_{d,s}}{2^{d-1}}\| u \|^{2}_{L^{2}(\mathbb{S}^{d-1})} \Big]\,.
\end{align}
This completes the proof.
\end{proof}
\subsection{Functional spaces, mass conservation and main energy estimate}  Due to Proposition \ref{main:smooth}, it is convenient to introduce the Hilbert space $H^{s}(\mathbb{S}^{d-1})$ (or simply $H^{s}_{\theta}$) defined as
\begin{equation}\label{angleSpace}
H^{s}_{\theta} := \big\{u\in L^{p_s}_{\theta}\;:\,(-\Delta)^{s/2}w_{\mathcal{J}}\in L^{2}_{v}\,\big\}\,,\quad s\in(0,1)\,,
\end{equation}
where $\tfrac{1}{p_s} = \tfrac{1}{2} - \tfrac{s}{d-1}$ and endowed with inner product 
\begin{align}\label{SobSphere}
\langle u,f \rangle_{H^{s}(\mathbb{S}^{d-1})}:=\big\langle (-\Delta)^{s/2}w_{\mathcal{J}},&(-\Delta)^{s/2}g_{\mathcal{J}}\big\rangle_{L^{2}(\mathbb{R}^{d-1})}\,,\nonumber\\
\text{where }\,\, w_{\mathcal{J}} &= \frac{ u_{\mathcal{J}}}{ \langle \cdot \rangle^{d-1- 2s} }\;\;\text{and}\;\; g_{\mathcal{J}} = \frac{ f_{\mathcal{J}}}{ \langle \cdot \rangle^{d-1- 2s} }\,,
\end{align}
as the working space in the angular variable.  That \eqref{SobSphere} is an inner product follows from Hardy-Littlewood-Sobolev (HLS) inequality
\begin{align}\label{smooth10}
\langle u,u &\rangle_{H^{s}(\mathbb{S}^{d-1})}=\int_{\mathbb{R}^{d-1}} \big|(-\Delta)^{s/2}w_{\mathcal{J}}(v)\big|^{2}\dv \nonumber
\\
&\geq C^{-2}_{H}\,\Big(\int_{\mathbb{R}^{d-1}}\big|w_{\mathcal{J}}(v)\big|^{p_s}dv\Big)^{\frac{2}{p_s}} = 2^{-(d-1)\frac{2}{p_s}}\,C^{-2}_{H}\,\Big(\int_{\mathbb{S}^{d-1}}\big|u(\theta)\big|^{p_s}\,\dtheta\Big)^{\frac{2}{p_s}}\,,
\end{align}
where $C_{H}$ is the HLS constant.  Thus, $\langle u,u \rangle_{H^{s}(\mathbb{S}^{d-1})}=0$ if and only if $u\equiv0$.  The condition $u\in L^{p_s}(\mathbb{S}^{d-1})$ is imposed to prevent constants as valid choice for $w_{\mathcal{J}}$ (which may happen for example when $u\in L^{2}_{\theta}$ only).  Now, observe the following useful representation of the inner product norm in $H^{s}(\mathbb{S}^{d-1})$ which follows directly from \eqref{smooth8}, \eqref{smooth9} and the weak representation \eqref{wf} 
\begin{align}\label{smooth10.1}
D_0\,\|u\|^{2}_{H^{s}(\mathbb{S}^{d-1})}:&=D_0\,\langle u,u \rangle_{H^{s}(\mathbb{S}^{d-1})} =D\int_{\mathbb{S}^{d-1}}(-\Delta_{\theta})^{s}u(\theta)\,\overline{u(\theta)}\,\dtheta \nonumber \\
&\hspace{.5 cm}= -\int_{\mathbb{S}^{d-1}}\mathcal{I}_{b(1)}(u)(\theta)\,\overline{u(\theta)}\,\dtheta + c_{d,s}\,b(1)\,\int_{\mathbb{S}^{d-1}}|u(\theta)|^{2}\dtheta  \\
&\hspace{-1cm}= 2^{\frac{d-1}{2} + s}\,b(1)\,\int_{\mathbb{S}^{d-1}}\int_{\mathbb{S}^{d-1}}\frac{(u(\theta') - u(\theta))^{2}}{|\theta' - \theta|^{d-1+ 2s}}\dtheta'\dtheta + c_{d,s}\,b(1)\int_{\mathbb{S}^{d-1}}|u(\theta)|^{2} \dtheta\,,\nonumber
\end{align}
where $D_0=2^{d-1}D$.  In the last expression one simply uses the equality $2(1- \theta\cdot\theta') = |\theta' - \theta|^{2}$ valid for any two unitary vectors.  Equations \eqref{smooth10.1} shown the equivalence of norms in $H^{s}(\mathbb{S}^{d-1})$
\begin{equation}\label{equivnorm}
\|u\|^{2}_{H^{s}(\mathbb{S}^{d-1})} \sim \int_{\mathbb{S}^{d-1}}\int_{\mathbb{S}^{d-1}}\frac{(u(\theta') - u(\theta))^{2}}{|\theta' - \theta|^{d-1+ 2s}}\dtheta'\dtheta + \|u\|^{2}_{L^{2}(\mathbb{S}^{d-1})}\,.
\end{equation}
which can be quite useful in computation\footnote{Expression \eqref{equivnorm} proves that $\langle\cdot,\cdot\rangle_{H^{s}_{\theta}}$ is an inner product when $u\in L^{p_s}_{\theta}$ is relaxed to just $u\in L^{2}_{\theta}$.  The HLS inequality, however, does not hold in general under this relaxed assumption.}.  Note that for functions $u\in L^{p_s}_{x,\theta}$ the Sobolev inequality \eqref{smooth10} is also valid since $u(x,\cdot)\in L^{p_s}_{\theta}$ for a.e $x$.  Therefore, the inequality
\begin{equation}\label{SE}
\int_{\mathbb{R}^{d}}\|u\|^{2}_{H^{s}_{\theta}}\dx\geq \int_{\mathbb{R}^{d}}\|u\|^{2}_{L^{p_s}_\theta}\dx
\end{equation}
is valid in the space
\begin{equation} \label{def:space}
H^{s}_{x,\theta}=\big\{u\in L^{p_s}_{x,\theta}\,: (-\Delta_{v})^{s/2}w_{\mathcal{J}}\in L^{2}_{x,v}\big\}\,,\quad s\in(0,1)\,.
\end{equation}
Finally, a direct integration of the radiative transport equation shows that solutions conserve mass
\begin{equation}\label{ConMass}
\int_{\mathbb{R}^{d}}\int_{\mathbb{S}^{d-1}}u(t,x,\theta)\dtheta\dx = \int_{\mathbb{R}^{d}}\int_{\mathbb{S}^{d-1}}u_0(x,\theta)\dtheta\dx\,,\quad t\geq0\,.
\end{equation} 
They also satisfy the energy estimate
\begin{align}\label{MEE1}
\tfrac{1}{2}\int_{\mathbb{R}^{d}}\int_{\mathbb{S}^{d-1}} \big|u(t,x,\theta)\big|^{2} &\dtheta\dx - \int^{t}_{t'}\int_{\mathbb{R}^{d}}\int_{\mathbb{S}^{d-1}}\mathcal{I}_{b_s}(u)\,u\, \dtheta\dx\dtau \nonumber\\
&= \tfrac{1}{2}\int_{\mathbb{R}^{d}}\int_{\mathbb{S}^{d-1}} \big|u(t',x,\theta)\big|^{2} \dtheta\dx\,,\quad \text{for any}\;\; 0 < t'\leq t<T\,.
\end{align}
Thus, using the equivalence of norms \eqref{smooth10.1} in \eqref{MEE1} one gets
\begin{align}\label{MEE2}
  \tfrac{1}{2} 
   \int_{\mathbb{R}^{d}}\int_{\mathbb{S}^{d-1}} 
     &\big| u(t,x,\theta) \big|^{2} \dtheta\dx 
 + D_0 \int^{t}_{t'}\int_{\mathbb{R}^{d}} 
                 \|u\|^{2}_{H^{s}_\theta} \dx\dtau \nonumber
\\
  &\leq 
       \tfrac{1}{2}
       \int_{\mathbb{R}^{d}}\int_{\mathbb{S}^{d-1}} 
            \big|u(t',x,\theta)\big|^{2} \dtheta\dx 
      + D_1 \int^{t}_{t'}\int_{\mathbb{R}^{d}}\|u\|^{2}_{L^{2}(\mathbb{S}^{d-1})} \dx\dtau\,,
\end{align}
valid for any $0< t'\leq t< T$.  Here $D_0$ depends on $d,s$ and $b(1)$ while $D_1$ depends on $d,s, b(1)$ and the integrable kernel $h$.  Energy estimate \eqref{MEE2} is central and will be used extensively along the proof.

\section{Technical Lemmas: Velocity Averaging Lemmas} 
In this section two versions of the regularization mechanism in the RTE equation are shown: weak and strong versions.  The weak version is the classical velocity averaging lemma where the average of the distribution function $u$ in the angular variable $\theta$ has improved regularity in the spatial variable $x$. The strong version is related to the fact that the actual density function $u$ will enjoy higher regularity in both angular and spatial variables.  Both proofs are quite related and follow the classical framework developed in~\cite{FB} adapted to the fact that $\theta$ lies in the sphere. The reader will note that the result about $L^{1}$ to $L^{2}$ improvement of Section~\ref{section:regularity-L-2} will only need the weak version.  Let us show first the regularity for the averaged density 
\begin{align}\label{def:rho}
   \rho(t,x) = \int_{\Ss^{d-1}} u(t, x, \theta) \dtheta 
          = \int_{\R^{d-1}} \UJ(t, x, v) \Jac(v) \dv \,,
\end{align}
where $\Jac(v) = \frac{2^{d-1}}{\La v \Ra^{2(d-1)}}$ is the Jacobian. 

\subsection{Averaging Lemma} First we show the weak regularization, that is, the averaging lemma for the solution. Throughout this subsection, we use $c_{0, d}$ to denote a constant that only depends on $d$. We also use $c_{0, d,s}$ to denote a constant that only on depends on $d,s$ and $c_{d,s,\delta}$ for any constant that only depends on $d,s, \delta$ where $\delta$ is defined in~
\eqref{def:delta}. These constants may change from line to line.

The main result is
\begin{Proposition}\label{Lemma:VA-Weak}
Suppose 
\begin{align*}
    g_1\in L^2([0, T] \times \R^{d} \times \R^{d-1}) \,,
\qquad
    g_2 \in L^2((0, T) \times \R^d; H^s(\R^{d-1})) 
\end{align*} 
for $s \in (0, 1)$ and $d \geq 3$.
Suppose $\UJ$ 
is a strong solution to the transport equation
\begin{align}\label{eq:transport-orig}
     \del_t \UJ + \theta(v) \cdot \Grad \UJ = g_1 + \La v \Ra^{(d-1)+2s} (-\Delta_v)^{s/2} g \,, 
\qquad
     \UJ \big|_{t=0} = \UJ^{o}(x, v) \,.
\end{align}
Then $\rho$ defined in~\eqref{def:rho} satisfies
\begin{align*}
    (-\Delta_x)^{\beta} \rho \in L^2([0, T] \times \R^{d}) \,.
\end{align*}
for $\beta > 0$ defined in~\eqref{def:regularity-exponent}. Moreover, there exists a constant $c_{d,s,\delta} > 0$ such that
\begin{align} \label{ineq:VA-Weak}
     \|(-\Delta_x)^{\beta} \rho\|_{L^2_{t, x}}^2
\leq
    c_{d,s,\delta} \left(\|\UJ^{o}\|_{L^2_{x, v}}^2
                  + \|\UJ\|_{L^2_{t,x,v}}^2
                  + \|g_1\|_{L^2_{t, x, v}}^2
                  + \left\|g_2 \right\|_{L^2_{t, x, v}}^2\right) \,.
\end{align}
where 
$\delta$ is defined in~\eqref{def:delta}.
\end{Proposition}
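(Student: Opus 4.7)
The plan is to follow the classical Fourier-analytic velocity averaging argument of Golse--Lions--Perthame--Sentis as reformulated in~\cite{FB}, adapted to two non-standard features of the present setting: the transport direction is $\theta(v)=\CalJ(v)\in\mathbb{S}^{d-1}$ rather than $v$ itself, and the right-hand side carries the weighted fractional source $\vint{v}^{(d-1)+2s}(-\Delta_v)^{s/2}g_2$ rather than a pure $L^2$ source. First I would extend $\UJ$ to $t\in\R$ (absorbing the initial datum $\UJ^{o}$ into an additional $L^2$ forcing in the usual way) and apply the Fourier transform in $(t,x)$, which turns the transport equation into the algebraic identity
\begin{equation*}
(i\tau+i\xi\cdot\CalJ(v))\,\widehat{\UJ}(\tau,\xi,v)=\widehat{g_1}(\tau,\xi,v)+\vint{v}^{(d-1)+2s}(-\Delta_v)^{s/2}\widehat{g_2}(\tau,\xi,v)+\widehat{F}(\tau,\xi,v),
\end{equation*}
where $\widehat{F}$ lumps together the initial data and the time-cutoff error, and one reads $\widehat\rho(\tau,\xi)=\int_{\R^{d-1}}\widehat{\UJ}\,\Jac(v)\,\dv$.

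Next I would fix a threshold $\lambda=\lambda(\tau,\xi)>0$ to be optimized and a smooth cutoff $\chi_\lambda(v)$ localized to the near-resonance set $\{v:|\tau+\xi\cdot\CalJ(v)|\lesssim\lambda\}$, and split $\widehat\rho=\widehat\rho_{\rm near}+\widehat\rho_{\rm off}$ accordingly. The near-resonance contribution is handled by Cauchy--Schwarz, reducing matters to the geometric measure estimate
\begin{equation*}
\int_{\R^{d-1}}|\chi_\lambda(v)|^2\,\Jac(v)^2\,\dv\lesssim \frac{\lambda}{|\xi|},
\end{equation*}
which follows from the fact that $v\mapsto\CalJ(v)$ is a diffeomorphism of $\R^{d-1}$ onto $\mathbb{S}^{d-1}$ minus the north pole, whose image is a non-degenerate sphere in the sense of averaging lemmas (uniform transversality of the level sets of $\xi\cdot\theta$), together with the strong decay $\Jac(v)\simeq\vint{v}^{-2(d-1)}$ which tames the tails of $\R^{d-1}$.

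For the off-resonance piece I would divide by the transport symbol and pair each source against the multiplier $m_\lambda(v;\tau,\xi)=\Jac(v)(1-\chi_\lambda(v))/(i\tau+i\xi\cdot\CalJ(v))$. The $\widehat{g_1}$ contribution is immediate once one has $\|m_\lambda\|_{L^2_v}^{2}\lesssim\lambda^{-1}|\xi|^{-1}$. The principal technical obstacle is the $\widehat{g_2}$ contribution: using Plancherel in $v$ I would transfer $(-\Delta_v)^{s/2}$ onto the test multiplier, reducing matters to a uniform bound of the form
\begin{equation*}
\bigl\|(-\Delta_v)^{s/2}\!\bigl(\vint{\cdot}^{(d-1)+2s}\,m_\lambda(\cdot;\tau,\xi)\bigr)\bigr\|_{L^2_v}^{2}\lesssim \lambda^{-1-2s}|\xi|^{-1}.
\end{equation*}
This is delicate because the cutoff becomes increasingly singular as $\lambda\downarrow 0$ and because $s$ is allowed to approach $1$; the resolution is to take $\chi_\lambda$ as a smooth mollification of $\mathbf{1}_{\{|\tau+\xi\cdot\CalJ(v)|\lesssim\lambda\}}$ at scale $\lambda$, so that first-order $v$-derivatives are controlled by $|\xi|/\lambda^2$ on a set of weighted measure $\lambda/|\xi|$, and then to interpolate between the resulting $H^1_v$ estimate and the trivial $L^2_v$ estimate (via Littlewood--Paley, or via Stein's pointwise formula for $(-\Delta_v)^{s/2}$) to recover the fractional bound. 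The growing weight $\vint{v}^{(d-1)+2s}$ is compensated by the decay of $\Jac$ together with the choice of the smooth cutoff at the relevant velocity scale, which is ultimately where the loss $\delta$ of~\eqref{def:delta} enters.

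Finally I would balance the two contributions by choosing $\lambda$ as a suitable power of $|\xi|$: the near-resonance term scales as $\lambda|\xi|^{-1}\|\widehat\UJ\|_{L^2_v}^2$, while the off-resonance term scales as $\lambda^{-1-2s}|\xi|^{-1}$ times the squared $L^2$ norms of the sources. Optimizing in $\lambda$ yields a gain of $|\xi|^{2\beta}$ with $\beta$ precisely as defined in~\eqref{def:regularity-exponent}, up to the loss $\delta$ from~\eqref{def:delta}. Multiplying by $|\xi|^{4\beta}$, integrating in $(\tau,\xi)$, and invoking Plancherel then produces~\eqref{ineq:VA-Weak}, with the four terms on the right-hand side corresponding respectively to the initial datum $\UJ^{o}$, the time-cutoff remainder (which generates the $\|\UJ\|_{L^2_{t,x,v}}$ term), the $L^2$ source $g_1$, and the weighted fractional source $g_2$.
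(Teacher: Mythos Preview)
Your outline follows the \cite{FB} route (time Fourier transform, near/off resonance cutoff $\chi_\lambda$), whereas the paper follows \cite{FD} (no time Fourier transform, artificial damping $\lambda$, Duhamel representation~\eqref{def:rho-hat}). Both frameworks are legitimate, but the treatment of the fractional source term is genuinely different, and your handling of it has a gap.

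The paper does \emph{not} transfer $(-\Delta_v)^{s/2}$ onto the multiplier and interpolate. Instead it writes $(-\Delta_v)^{s/2}g_2=-\nabla_v\cdot\bigl(\mathcal{R}\,(-\Delta_v)^{-(1-s)/2}g_2\bigr)$, so that the source lies in $L^{p_1}_v$ with $p_1=\tfrac{2(d-1)}{d-3+2s}>2$, and then integrates by parts once and applies H\"older with the conjugate exponent $q=\tfrac{d-1}{1-s}$. This is what produces the specific measure estimate on $\int_{z<\theta(v)\cdot e<z+\epsilon}|\nabla_v\psi|^{q(1-\alpha)}\,\dv$, and hence the exponent $\beta_1=2(1-\delta)/q$ and the final $\beta$ in~\eqref{def:regularity-exponent}. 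The $\delta$-loss in~\eqref{def:delta} arises precisely because for $d=3$ one has $q(1-\alpha)(d-2s)=2(d-1)-2\delta$, making the weight just barely non-integrable on the sphere.

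Your interpolation step, by contrast, is not pinned down. The asserted bound $\bigl\|(-\Delta_v)^{s/2}\bigl(\vint{\cdot}^{(d-1)+2s}m_\lambda\bigr)\bigr\|_{L^2_v}^2\lesssim\lambda^{-1-2s}|\xi|^{-1}$ is not consistent with your own $L^2$ and $H^1$ endpoints: with $\|F\|_{L^2}^2\lesssim\lambda^{-1}|\xi|^{-1}$ and $\|\nabla_v F\|_{L^2}^2\lesssim|\xi|\lambda^{-3}$ (from differentiating the denominator), interpolation gives $\lambda^{-1-2s}|\xi|^{2s-1}$, not $\lambda^{-1-2s}|\xi|^{-1}$, and the resulting optimal $\lambda$ and $\beta$ then do not match~\eqref{def:regularity-exponent}. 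Worse, for $s\geq 1/2$ the $H^1_v$ bound on the multiplier is \emph{not} uniform in $(\tau,\xi)$: the term $\nabla_v\vint{v}^{-(d-1)+2s}$ leaves a factor $\vint{v}^{4s-2}\sim(1-\theta_d)^{-(2s-1)}$ after passing to the sphere, which is singular at the north pole and interacts badly with the resonance set whenever the level circle $\{\tau+\xi\cdot\theta=0\}$ passes near it. Your sentence ``the growing weight is compensated by the decay of $\Jac$ \ldots\ which is ultimately where the loss $\delta$ enters'' does not resolve this; the paper's H\"older/Sobolev mechanism with exponent $q$ is exactly the device that tames this singularity in a controlled way. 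Finally, with the scalings you actually state (near $\sim\lambda|\xi|^{-1}$, off $\sim\lambda^{-1-2s}|\xi|^{-1}$), the optimization in $\lambda$ gives $\lambda$ \emph{independent of} $|\xi|$, hence no gain beyond $\beta=1/2$, contradicting your claim that one recovers ``$\beta$ precisely as defined in~\eqref{def:regularity-exponent}''.
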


\begin{proof}
The proof is an adaption of the method in~\cite{FD}. We will focus on the second term containing $g_2$ since the part corresponding to $g_1$ follows directly from~\cite{FD}. 
Therefore we will check the regularity for $\rho$ where $u$ is a solution to 
\begin{equation}
\begin{aligned}\label{eq:transport-g-2}
     \del_t \UJ + \theta(v) \cdot \Grad \UJ &= \La v \Ra^{(d-1)+2s} (-\Delta_v)^{s/2} g_2 \,, 
\\
     \UJ \big|_{t=0} = \UJ^{o}(x, v)  \,.
\end{aligned}
\end{equation}
Let $\lambda$ be a constant (in $v$) which is to be determined. Rewrite~\eqref{eq:transport-g-2} as 
\begin{equation}
\begin{aligned}\label{eq:transport-g-3}
     \del_t \UJ  + \lambda \UJ + \theta(v) \cdot \Grad \UJ
    & \, = \lambda \UJ + \La v \Ra^{(d-1)+2s} (-\Delta_v)^{s/2} g_2 \,, 
\\
     \UJ \big|_{t=0} & \, = \UJ^{o}(x, v) \,.
\end{aligned}
\end{equation}
Let $\hat\rho(t, \xi)$ be the Fourier transform of $\rho$ in $x$ and take the Fourier transform in $x$ of~\eqref{eq:transport-g-3}. We can then directly solve for $\hat\rho$ and obtain
\begin{equation} \label{def:rho-hat}
\begin{aligned}
    \hat\rho(t, \xi) 
  =\, & e^{-\lambda t} \!
     \int_{\R^{d-1}} \!\! e^{- i \theta \cdot \xi t} \,\, \hat u_{\CalJ}^{o} J(v)\dv
       + \lambda \!
             \int_0^t \!\! \int_{\R^{d-1}} \!\!
                   e^{- (\lambda + i \theta \cdot \xi) \tau}  \,
                   \hat u_{\CalJ}(t - \tau, \xi, v) J(v) \dv \dtau
\\[3pt]
   & + \int_0^t \int_{\R^{d-1}}   
               e^{- (\lambda + i \theta \cdot \xi) \tau}  \,
               J(v) \La v \Ra^{(d-1)+2s} \,    
               (-\Delta_v)^{s/2} \hat g_2(t-\tau, \xi, v) \dv\dtau \,.         
\\[3pt]
   & \hspace{-0.5cm} \Denote I_1 + I_2 + I_3 \,.
\end{aligned}
\end{equation}
We estimate $I_1, I_2, I_3$ respectively. First, note that 
$\sqrt{J} \in L^2(\dv)$ 
\begin{align*}
   \int_{z < \theta(v) \cdot e < z + \epsilon} J(v) \dv
   = \int_{z < \theta \cdot e < z + \epsilon} 1 \dtheta 
   < c_{0,d} \, \epsilon \,,
\end{align*}
for any $z \in \R$, $e \in \Ss^{d-1}$, and $\epsilon > 0$. 
Therefore, the first two terms $I_1, I_2$ are estimated in exactly the same way as in~\cite{FD} which gives
\begin{align} \label{bound:I-1-I-2}
& \quad \,
    \int_0^T \left(|I_1|^2(t, \xi) + |I_2|^2(t, \xi) \right) \dt \nn
\\
&\leq
     c_{0,d} \left(\frac{1}{|\xi|} \int_{\R^{d-1}} |\hat u_{\CalJ}^{o}(\xi, v)|^2 J\dv
                   + \frac{\lambda}{|\xi|} \int_0^T \int_{\R^{d-1}} |\hat u_{\CalJ}(t, \xi, v)|^2 J \dv\dt          
             \right) \,,
\end{align}
In order to estimate $I_3$, we denote 
\begin{align*}
    \tilde g = (-\Delta_v)^{-\frac{1-s}{2}} g_2 \,. 
\end{align*}
Since $g_2(t, \cdot, \cdot) \in L^2_{x, \theta}$, we have  
\begin{align*}
    \tilde g(t,x, \cdot) \in 
        L^{p_1}(\R^{d-1}) \,,  
\qquad p_1 = \frac{2(d-1)}{(d-1) - 2(1-s)} > 2 \,,
\end{align*}
The forcing term in terms of $\tilde g$ has the form
\begin{align*}
      (-\Delta_v)^{s/2} g_2 = - \nabla_v \cdot (\mathcal{R} \tilde g) \,, 
\end{align*}
where $\mathcal{R} = (\mathcal{R}_1, \mathcal{R}_2, \cdots, \mathcal{R}_{d-1})$ is the Riesz transform in $\R^{d-1}$. Note that $\mathcal{R} \tilde g \in L^{p_1}(\R^{d-1})$.
The third term $I_3$ then becomes
\begin{align*}
    I_3 &= 2^{d-1} \int_0^t \int_{\R^{d-1}} e^{-\lambda \tau}    
                \left(e^{- i \theta \cdot \xi \tau}  \,
               \La v \Ra^{-(d-1)+2s} \right) \,    
                \nabla_v \cdot  \widehat{\mathcal{R} \tilde g}(t-\tau, \xi, v) \dv\dtau
\\
    & = 2^{d-1} (i \, \tau \, \xi) \cdot  \int_0^t \int_{\R^{d-1}} e^{-\lambda \tau}    
                e^{- i \theta \cdot \xi \tau}  \,
                \left(\La v \Ra^{-(d-1)+2s} \nabla_v \theta(v) \right)  \,  \cdot  
                 \widehat{\mathcal{R} \tilde g}(t-\tau, \xi, v) \dv\dtau
\\ & \quad \,
            - 2^{d-1} \int_0^t \int_{\R^{d-1}} e^{-\lambda \tau}    
                e^{- i \theta \cdot \xi \tau}  \,
                \nabla_v \left(\La v \Ra^{-(d-1)+2s} \right) \,  \cdot
                 \widehat{\mathcal{R} \tilde g}(t-\tau, \xi, v) \dv\dtau
\\
  & \Denote I_{31} - I_{32} \,.                               
\end{align*}
We will show the estimates for $I_{32}$ in details. The other term $I_{31}$ will be bounded in a similar way. For the ease of notation, let 
\begin{align*}
     \psi(v) = \La v \Ra^{-(d-1)+2s} \,.
\end{align*}
Note that 
\begin{align} \label{est:Grad-psi}
    \nabla_v \psi(v) \leq c_{0,d,s} \La v \Ra^{-d + 2s} \,.
\end{align}
The estimates for $I_{32}$ are as follows. 
\begin{equation}
\begin{aligned} \label{ineq:I-32}
& 
   \int_0^T \!\! \left|I_{32}(t, \xi) \right|^2 \!\! \dt 
 = 2^{2(d-1)} \!\! \int_0^T  \!\!\!
             \left(\int_0^t \!\! \int_{\R^{d-1}}  \!\!\! e^{-\lambda \tau} 
                e^{- i \theta \cdot \xi \tau}  \,
                \nabla_v \psi(v)   \cdot
                 \widehat{\mathcal{R} \tilde g}(t-\tau, \xi, v) \dv\dtau\right)^2 \!\!\! \dt
\\
& \leq \frac{2^{2(d-1)}}{\lambda} \int_0^T 
             \int_0^t e^{-\lambda \tau} 
                  \left|\int_{\R^{d-1}}     
                e^{- i b(v) (|\xi| \tau)}  \,
                \nabla_v \psi(v) \,  \cdot
                 \widehat{\mathcal{R} \tilde g}(t - \tau, \xi, v) \dv \right|^2\dtau \dt
\\
& \leq \frac{2^{2(d-1)}}{\lambda} \int_0^T 
             \int_0^\infty e^{-\lambda \tau} 
                  \left|\int_{\R^{d-1}}     
                e^{- i b(v) (|\xi| \tau)}  \,
                \nabla_v \psi(v) \,  \cdot
                 \widehat{\mathcal{R} \tilde g}(t, \xi, v) \dv \right|^2 \dtau \dt
\\
& = \frac{2^{2(d-1)}}{\lambda |\xi|}\int_0^T 
             \left(\int_0^\infty e^{-\frac{\lambda}{|\xi|} \tau} 
                  \left|\int_{\R^{d-1}}     
                e^{- i b(v) \tau}  \,
                \nabla_v \psi(v) \,  \cdot
                 \widehat{\mathcal{R} \tilde g}(t, \xi, v) \dv \right|^2\dtau\right) \dt \,,
\end{aligned}
\end{equation}
where $b(v) = \theta(v) \cdot \xi/|\xi|$. 
Following~\cite{FD}, we let $\zeta(y) = \One_{y>0} \, e^{-y}$
such that $\Disp\hat\zeta(z) = \frac{1}{1-iz}$.
Define
\begin{align*}
     \phi(y) 
     = \int_{\R^{d-1}} 
           \frac{1}{\gamma} \, \zeta\left(\frac{b(v) - y}{\gamma}\right) 
           \nabla_v \psi(v) \cdot \, \widehat{\mathcal{R} \tilde g}(t, \xi, v) \dv \,.
\end{align*}
Denote $\CalF_v$ as the Fourier transform in $v$. Then
\begin{align*}
    \left| \CalF_v (\phi) \left(z \right)\right| 
    = \frac{1}{\sqrt{1+ \gamma^2 z^2}} 
         \left|\int_{\R^{d-1}} 
                    e^{-i b(v) z} 
                    \nabla_v \phi(v) \cdot \widehat{\mathcal{R} \tilde g}(t, \xi, v) \dv \right| \,.
\end{align*}
Hence by Plancherel's theorem, the integrand in the last term of~\eqref{ineq:I-32} satisfies 
\begin{align*}
& \quad \,
   \int_0^\infty e^{-\frac{\lambda}{|\xi|} \tau} 
                  \left|\int_{\R^{d-1}}     
                e^{- i b(v) \tau}  \,
                \nabla_v \psi(v) \,  \cdot
                 \widehat{\mathcal{R} \tilde g}(t, \xi, v) \dv \right|^2\dtau
\\
& \leq 
   \int_0^\infty 
         \left|\frac{1}{\sqrt{1 + \frac{\lambda^2}{|\xi|^2} \tau^2}}\int_{\R^{d-1}}     
                e^{- i b(v) \tau}  \,
                \nabla_v \psi(v) \,  \cdot
                 \widehat{\mathcal{R} \tilde g}(t, \xi, v) \dv \right|^2\dtau
\\
& \leq 
   \int_{\R} 
     \left|\int_{\R^{d-1}} 
      \frac{1}{\gamma} \, \zeta\left(\frac{b(v) - y}{\gamma}\right) 
        \nabla_v \psi(v) \cdot \, \widehat{\mathcal{R} \tilde g}(t, \xi, v) \dv\right|^2 \dy \,,
\end{align*}
where $\gamma = \frac{\lambda}{|\xi|}$. Using H\"{o}lder's inequality, we have
\begin{equation}
    \label{bound:I-3-2-1}
\begin{aligned}
& \quad \,
   \left| \int_{\R^{d-1}} 
      \frac{1}{\gamma} \, \zeta\left(\frac{b(v) - y}{\gamma}\right) 
        \nabla_v \psi(v) \cdot \, \widehat{\mathcal{R} \tilde g}(t, \xi, v) \dv\right|^2
\\
&\leq
     \frac{1}{\gamma^2}
       \left(\!\int_{\R^{d-1}} \!\!
                  \zeta\left(\frac{b(v) - y}{\gamma}\right)
                  |\nabla_v \psi|^{2\alpha} \dv \! \right) \!
      \left(\!\int_{\R^{d-1}} \!\!
               \zeta^{q/2}\left(\frac{b(v) - y}{\gamma}\right)
               |\nabla_v \psi|^{q(1-\alpha)} \dv \right)^{\frac{2}{q}}
\\[3pt]
& \quad \, \times
      \|\widehat{\mathcal{R} \tilde g}(t, \xi, \cdot)\|_{L^{p_1}(\dv)}^2 \,,
\end{aligned}
\end{equation}
where $\alpha, q$ are chosen such that
\begin{align*}
     q = \frac{d-1}{1-s} \,, 
 \qquad 
    \frac{d-1}{2(d-2s)} < \alpha < 1 \,.
\end{align*}
Note that for $d \geq 3$ and $s \in (0,1)$, we indeed have $\Disp \frac{d-1}{2(d-2s)} < 1$. In this case, 
\begin{align} \label{bound:psi-1}
     \int_{\R^{d-1}} |\nabla_v \psi|^{2\alpha} \dv 
\leq 
    c_{0,d,s} \int_{\R^{d-1}} 
                \La v \Ra^{-2\alpha (d-2s)} \dv
< \infty \,,
\end{align}
Next, 
\begin{align*}
   q(1-\alpha) 
< \frac{d-1}{1-s} \left(1 - \frac{d-1}{2(d-2s)}\right)
= \left(\frac{d-1}{1-s}\right) \left( \frac{d+1-4s}{2(d-2s)} \right) \,.
\end{align*}
Take $\alpha$ close to $\frac{d-1}{2(d-2s)}$ such that 
\begin{align*}
    q(1-\alpha) 
=  \left(\frac{d-1}{1-s}\right) \left( \frac{d+1-4s}{2(d-2s)} \right) 
    - \frac{2\delta}{d-2s} \,,
\end{align*}
where $\delta$ is close to zero which is to be determined. Note that 
\begin{align*}
     q(1-\alpha)(d-2s)
& = 2(d-1) + \left(\left(\frac{d-1}{1-s}\right) \left( \frac{d+1-4s}{2} \right)
                           - 2(d-1)\right) - \delta
\\
& = 2(d-1) + \frac{(d-3)(d-1)}{2(1-s)} - 2\delta                           
\end{align*}
In particular, if $d=3$, then
\begin{align*}
    q(1-\alpha)(d-2s) = 2(d-1) - 2\delta \,.
\end{align*}
Then,
\begin{align*}
& \quad \,
    \int_{z < \theta(v) \cdot e < z + \epsilon}
        |\nabla_v \psi|^{q(1-\alpha)} \dv
\leq
     {c_{0,d,s}} \int_{z < \theta(v) \cdot e < z + \epsilon}
               \La v \Ra^{-2(d-1) + 2\delta} \dv
\\
& \leq 
      c_{0,,d,s} \int_{z < \theta \cdot e < z + \epsilon}
           \frac{1}{(1-\theta_3)^\delta} \dtheta
\leq 
      c_{d,s,\delta} \, \epsilon^{1-\delta} \,,
\end{align*}
for any $\delta \in (0, 1)$, $z \in \R$, and $e \in \Ss^{d-1}$. 
If $d \geq 4$, then we can have
\begin{align*}
    q(1-\alpha)(d-2s) > 2(d-1)
\end{align*}
by choosing $\alpha$ close enough to $\frac{d-1}{2(d-2s)}$. In this case, $\delta$ will be chosen as zero.
Therefore, 
\begin{align*}
& \quad \,
    \int_{z < \theta(v) \cdot e < z + \epsilon}
        |\nabla_v \psi|^{q(1-\alpha)} \dv
\leq
     {c_{0,d,s}} \int_{w < \theta(v) \cdot e < w + \epsilon}
               \La v \Ra^{-q(1-\alpha)} \dv
\leq c_{0,d,s} \, \epsilon \,,
\end{align*}
for any $z \in \R$, $e \in \Ss^{d-1}$. 
By the proof of Lemma 2.4 and Remark 2.5 in~\cite{FD}, we have
\begin{align} \label{bound:psi-2}
      \left(\int_{\R^{d-1}}
               \zeta^{q/2}\left(\frac{b(v) - y}{\gamma}\right)
               |\nabla_v \psi|^{q(1-\alpha)} \dv \right)^{2/q}
\leq 
    c_{d,s,\delta} \, \gamma^{2(1-\delta)/q} \,,
\end{align}
where
\begin{align} \label{def:delta}
   \text{$\delta \in (0, 1)$ arbitrary for $d=3$}, \qquad \text{$\delta = 0$ for $d \geq 4$} \,.
\end{align}
Combining~\eqref{bound:I-3-2-1}, \eqref{bound:psi-1}, and \eqref{bound:psi-2}, we have
\begin{align*}
& \quad \,
  \int_{\R} 
     \left|\int_{\R^{d-1}} 
      \frac{1}{\gamma} \, \zeta\left(\frac{b(v) - y}{\gamma}\right) 
        \nabla_v \psi(v) \cdot \, \widehat{\mathcal{R} \tilde g}(t, \xi, v) \dv\right|^2 \dy 
\\[3pt]
& \leq
     c_{d,s,\delta} \, \gamma^{-1 + 2(1-\delta)/q} \, 
        \|\widehat{\mathcal{R} \tilde g}(t, \xi, \cdot)\|_{L^{p_1}(\dv)}^2 \,.
\end{align*}
Therefore,
\begin{equation} 
    \label{bound:I-3-2}
\begin{aligned} 
    \int_0^T \left|I_{32}(t, \xi) \right|^2 \dt 
& \leq 
    \frac{2^{2(d-1)} c_{0,8} }{\lambda |\xi|} \left(\frac{\lambda}{|\xi|}\right)^{-1+2(1-\delta)/q}
       \int_0^T \|\widehat{\mathcal{R} \tilde g}(t, \xi, \cdot)\|_{L^{p_1}(\dv)}^2 \dt
\\
& \leq 
     c_{d,s,\delta} \frac{\lambda^{-2+2(1-\delta)/q}}{|\xi|^{2(1-\delta)/q}}
       \int_0^T |\hat g(t, \xi, v)|^2 \dv\dt \,.
\end{aligned}
\end{equation}
The estimate for $I_{31}$ is similar since 
\begin{align*}
    |\La v \Ra^{-(d-1)+2s}\nabla_v \theta|
\leq 
   c_{0, d} \La v \Ra^{-d+2s} \,,
\end{align*}
which is the same bound as $\nabla_v \psi$ in~\eqref{est:Grad-psi}. 
The only difference is that $I_{31}$ has an extra coefficient $i \tau \xi $, which gives an extra coefficient $\frac{|\xi|^2}{\lambda^2}$ in a similar step in~\eqref{ineq:I-32} when estimating $\int_0^T \abs{I_{31}(t, \xi)}^2 \dt$. Therefore, 
\begin{align} \label{bound:I-3-1}
   \int_0^T \left|I_{31}(t, \xi) \right|^2 \dt
\leq 
   c_{d,s,\delta} \frac{\lambda^{-4 + 2(1-\delta)/q}}{|\xi|^{-2+2(1-\delta)/q}}
          \int_0^T |\hat g(t, \xi, v)|^2 \dv\dt 
\end{align}
Combining~\eqref{bound:I-1-I-2}, \eqref{bound:I-3-2}, and~\eqref{bound:I-3-1}, we have
\begin{align*}
    \int_0^T |\hat\rho(t, \xi)|^2 \dt
&\leq 
   c_{d,s,\delta} \left(\frac{1}{|\xi|} \int_{\R^{d-1}} |\hat \UJ^{o}(\xi, v)|^2 J\dv
                 + \frac{\lambda}{|\xi|} \int_0^T \int_{\R^{d-1}} |\hat \UJ(t, \xi, v)|^2 \dv\dt    
                 \right)
\\
& \quad \,
   + c_{d,s,\delta} \left(\frac{\lambda^{-2+2(1-\delta)/q}}{|\xi|^{2(1-\delta)/q}}
                   + \frac{\lambda^{-4 + 2(1-\delta)/q}}{|\xi|^{-2+2(1-\delta)/q}}\right)
             \int_0^T |\hat g(t, \xi, v)|^2 \dv\dt \,.
\end{align*}
Choose 
\begin{align*}
     \lambda = |\xi|^{\frac{3-\beta_1}{5-\beta_1}} \,,
\qquad
     \beta_1 = \frac{2(1-\delta)}{q} \,.
\end{align*}
Since $\rho \in L^2((0, T) \times \R^d)$, we only need to integrate over $|\xi| > 1$ and obtain
\begin{align*}
    \|\rho\|_{L^2((0, T), H^\beta(\dx))}^2
\leq 
    c_{d,s,\delta} \left(\|\UJ^{o}\|_{L^2( J\dv\dx)}^2
                  + \|\UJ\|_{L^2(J\dt\dv\dx)}^2
                  + \|g_1\|_{L^2_{t, x, v}}^2
                  + \left\|g \right\|_{L^2_{t, x, v}}^2\right) \,,
\end{align*}
where recall that 
$\delta$ is defined in~\eqref{def:delta} and
\begin{align} \label{def:regularity-exponent}
    \beta = \frac{2}{5-\beta_1} = \frac{2q}{5q - (1-\delta)} \,,
\qquad
    q = \frac{d-1}{1-s} \,.
\end{align}
We thereby finish the proof of the regularization of $\rho$.
\end{proof}
\begin{rmk}
Note that although we assume $g_2 \in L^2((0, T) \times \R^d; H^s(\R^{d-1}))$ to make the proof of Proposition \ref{Lemma:VA-Weak} rigorous, the bound in~\eqref{ineq:VA-Weak} only depends on the $L^2$-norm of $g_2$. Hence a typical density argument can relax the assumption to $g_2 \in L^2((0, T) \times \R^d; L^2(\R^{d-1}))$.
\end{rmk}

\subsection{Strong Regularizing Lemma}
The objective of the following discussion is to prove a key estimate to obtain the regularizing effect in the spatial variable of a solution $u(t,x,\theta)$ satisfying the radiative transfer equation  in the highly peaked forward regime.
\begin{Theorem}\label{st-reg-lemma}
Fix any dimension $d\geq3$ and assume that $u\in \mathcal{C}\big([t_0,t_1),L^{2}(\mathbb{R}^{d}\times\mathbb{S}^{d-1})\big)$ solve the transport problem
\begin{equation}\label{sr:main}
\partial_{t}u + \theta\cdot\nabla_{x}u = \mathcal{I}(u)\,,\quad\quad t\in[t_0,t_{1})\,.
\end{equation}
Then for any $s\in(0,1)$, there exists a constant $C:=C(d,s)$ independent of time such that 
\begin{align}\label{sr:main:estimate}
\big\|(-\Delta_{x})^{\frac{s_0}{2}} u&\big\|_{L^{2}([t_0,t_1)\times\mathbb{R}^{d}\times\mathbb{S}^{d-1})}\leq C\,\Big( \big\| u(t_0)\big\|_{L^{2}(\mathbb{R}^{d}\times\mathbb{S}^{d-1})} + \big\|u \big\|_{L^{2}([t_0,t_1)\times\mathbb{R}^{d}\times\mathbb{S}^{d-1})}  \nonumber\\
&\hspace{1cm}+\big\|(-\Delta_{v})^{s/2}w_{\mathcal{J}} \big\|_{L^{2}([t_0,t_1)\times\mathbb{R}^{d}\times\mathbb{R}^{d-1})} \Big)\,,\quad\quad s_0=\frac{s/4}{2s+1}\,.
\end{align}
\end{Theorem}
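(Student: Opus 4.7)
My approach is to follow the same Fourier/damping/Duhamel machinery used in the proof of Proposition \ref{Lemma:VA-Weak}, but to keep $\theta$ (equivalently $v$ after stereographic projection) as a free variable throughout, estimating $\hat u(t,\xi,\theta)$ in $L^2_{t,\theta}$ rather than $\rho$ in $L^2_t$. The loss coming from not averaging in $\theta$ is paid for by trading a ``transport-favourable'' angular region, where the oscillatory phase $e^{-i(\theta\cdot\xi)\tau}$ yields decay, against a thin band around the great circle $\theta\perp\xi$, where the angular Sobolev regularity supplied by Proposition \ref{main:smooth} and \eqref{MEE2} controls $u$.

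Concretely, I would first take the Fourier transform of \eqref{sr:main} in $x$, add $\lambda\hat u$ to both sides, and solve by Duhamel, obtaining a decomposition $\hat u=I_1+I_2+I_3$ in exact analogy with \eqref{def:rho-hat} but with no $\int J\,dv$ applied. Using Proposition \ref{main:smooth}, the scattering term splits into a part bounded in $L^2_\theta$ and a term of the form $\langle v\rangle^{(d-1)+2s}(-\Delta_v)^{s/2}\hat g$, exactly the two types of forcings already handled in the weak lemma. Next, for a cutoff $\epsilon=\epsilon(\abs{\xi})>0$, I would set $A_\epsilon=\{\theta\in\mathbb{S}^{d-1}:\abs{\theta\cdot\xi/\abs{\xi}}>\epsilon\}$. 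On $A_\epsilon$ the $\zeta$-convolution and Plancherel computation of \eqref{ineq:I-32}--\eqref{bound:I-3-2} carries over almost word for word, provided the averaged level-set bound $\int_{z<\theta\cdot e<z+\epsilon}J\,dv\leq c\epsilon$ is replaced by the pointwise restriction $\abs{\theta\cdot\xi/\abs{\xi}}>\epsilon$; this produces an $L^2_{t,\theta}(A_\epsilon)$ bound polynomial in $(\lambda,\epsilon,\abs{\xi})$. On the complementary band $A_\epsilon^c$, which has spherical measure $\sim\epsilon$, Hölder's inequality together with the angular Sobolev embedding \eqref{smooth10} yields
\begin{equation*}
\int_{A_\epsilon^c}\abs{u}^2\,\dtheta \;\lesssim\; \epsilon^{2s/(d-1)}\,\|u\|_{H^s_\theta}^2,
\end{equation*}
and the right-hand side is absorbed into $\|(-\Delta_v)^{s/2}w_\CalJ\|_{L^2_v}^2+\|u\|_{L^2_\theta}^2$ by \eqref{smooth10.1}.

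To conclude I would multiply by $\abs{\xi}^{2s_0}$, integrate in $\xi$ after restricting to $\abs{\xi}>1$ (the low-frequency piece being trivially absorbed into $\|u\|_{L^2_{t,x,\theta}}^2$), and choose $\lambda=\abs{\xi}^a$, $\epsilon=\abs{\xi}^{-b}$ with exponents $a,b>0$ that simultaneously balance the $A_\epsilon$ and $A_\epsilon^c$ contributions against the weight $\abs{\xi}^{2s_0}$; unwinding the algebra in the spirit of the choice of $\lambda$ just after \eqref{bound:I-3-1} should deliver the stated $s_0=s/(4(2s+1))$. The main technical obstacle will be the $A_\epsilon$ estimate without averaging: losing the explicit factor of $\epsilon$ that came from $\int J\,dv\leq c\epsilon$ forces one to redo the Plancherel / Riesz-transform step with $\theta$ held fixed, and the careful bookkeeping of the powers of $\epsilon$, $\lambda$ and $\abs{\xi}$ under this modification — in particular verifying that the gain $\epsilon^{2s/(d-1)}$ on $A_\epsilon^c$ together with the angular $H^s$ regularity is enough to close the balance at a dimension-independent exponent — is what determines the precise value of $s_0$.
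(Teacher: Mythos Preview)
Your decomposition $\mathbb{S}^{d-1}=A_\epsilon\cup A_\epsilon^c$ according to the size of $|\theta\cdot\hat\xi|$ is not the device the paper uses, and on $A_\epsilon$ your argument has a real gap. The Plancherel/$\zeta$-convolution computation in Proposition~\ref{Lemma:VA-Weak} is intrinsically an \emph{angular-averaging} estimate: the function $\phi(y)$ there is defined by an integral over $v$ (equivalently $\theta$), and Plancherel in $y$ converts the oscillating phase $e^{-ib(v)\tau}$ into decay only after that integral has been performed. With $\theta$ held fixed on $A_\epsilon$, the resolvent identity $\hat u=(\lambda\hat u+\widehat{\CalL u}+\text{b.c.})/(\lambda+i(w+\theta\cdot k))$ gives, for each $\theta$, a denominator that attains the value $\lambda$ at $w=-\theta\cdot k$; integrating in $w$ alone therefore yields no gain in $|k|$, and the restriction $|\theta\cdot\hat k|>\epsilon$ is invisible to the $w$-integral. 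The decay in $|k|$ comes precisely from the fact that the ``bad'' frequency $w=-\theta\cdot k$ varies with $\theta$, and exploiting this requires some form of $\theta$-averaging that your pointwise scheme does not supply.

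The paper's proof achieves the needed \emph{localized} angular averaging by a mollification: one writes $\hat u=\rho_\epsilon\star\hat u+(\hat u-\rho_\epsilon\star\hat u)$ with $\rho_\epsilon$ an approximate identity on the sphere. The commutator $\hat u-\rho_\epsilon\star\hat u$ is controlled by $\epsilon^{s}\|u\|_{H^s_\theta}$ directly from the singular-integral form \eqref{smooth10.1}, which is a sharper gain than your H\"older estimate $\epsilon^{2s/(d-1)}$ on $A_\epsilon^c$ and is what makes the final exponent $s_0$ dimension-free. The mollified piece $\rho_\epsilon\star\hat u$ is a genuine (short-range) $\theta'$-average of the resolvent expression, so Cauchy--Schwarz produces integrals of the type $\int_{\Ss^{d-1}}\rho_\epsilon(1-\theta\cdot\theta')\,|1+i(w+k\cdot\theta')/\lambda|^{-2}\,d\theta'$, which \emph{do} decay in $|k|$ (see \eqref{sre12}--\eqref{sre13}); the singular part $(-\Delta_\theta)^s\hat u$ is handled by passing to stereographic coordinates, integrating by parts with the fractional gradient $\nabla_v^{2s-1}$, and using Sobolev embedding. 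The final balancing of $\epsilon=|k|^{-a}$ and $\lambda=|k|^{b}$ is then an optimization over these mollified estimates, not over a good/bad angular splitting. If you want to salvage your outline, the minimal fix is to replace the $A_\epsilon/A_\epsilon^c$ split by this angular mollification, at which point the argument becomes the paper's.
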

\begin{proof}
We follow the method given in \cite{FB} and adapt it to the advection operator $\theta\cdot\nabla_{x}$.  We start with an approximation of the identity in the sphere $\{\rho_{\epsilon}\}_{\epsilon>0}$ defined through an smooth function $\rho \in \mathcal{C}(-1,1)$ satisfying the properties
\begin{equation}\label{sre1}
\int^{1}_{-1}\rho(z)\,z^{\frac{d-3}{2}}\,\dz=1\,,\quad \quad 0\leq\rho(z)\lesssim \frac{1}{z^{\frac{d-1}{2}+s}}\,.
\end{equation}
Introduce the quantity
\begin{equation}\label{sre2}
C_{\epsilon}=|\mathbb{S}^{d-2}|\int^{1}_{-1}\rho(z)\,z^{\frac{d-3}{2}}\big(2-\epsilon\,z\big)^{\frac{d-3}{2}}\,\dz\,,\quad \epsilon\in(0,1]\,,
\end{equation}
and note that $\inf_{\epsilon\in(0,1]}C_{\epsilon}>0$.  Thus, define the approximation of the identity as
\begin{equation}\label{sre3}
\rho_{\epsilon}(z) = \tfrac{1}{C_{\epsilon}\,\epsilon^{ \frac{d-1}{2} }}\,\rho\Big(\frac{z}{\epsilon}\Big)\,.
\end{equation}
It is not difficult to prove that
\begin{equation}\label{sre4}
\int_{\mathbb{S}^{d-1}}\rho_{\epsilon}\big(1-\theta\cdot\theta' \big)\,\dtheta' = 1\,,\quad \epsilon>0\,.
\end{equation}
In the sequel we understand the convolution in the sphere, for any real function $\psi$ defined on the sphere, as
\begin{equation}\label{sre5}
\big(\rho \star \psi\big) (\theta) = \int_{\mathbb{S}^{d-1}} \rho\big(1 - \theta\cdot\theta' \big)\,\psi(\theta')\,\dtheta'\,.
\end{equation}
Now, we wish to analyze $u$, a solution of \eqref{sr:main}, in the interval $[t_0,t_1)$ for any $0<t_0<t_1<\infty$.  To this end, multiply \eqref{sr:main} by $\text{1}_{[t_0,t_{1})}$ and take the Fourier transform in time and spatial variables to obtain
\begin{equation}\label{sre6.5}
i(w+\theta\cdot k)\,\hat{u}(w,k,\theta) = \mathcal{I}(\hat{u})(w,k,\theta) + \hat{u}(t_0,k,\theta)\,e^{-iwt_0}\,,
\end{equation}
were we have denoted $\hat{u}(w,k,\theta)$ the Fourier transform of $u(t,x,\theta)\,\text{1}_{[t_0,t_{1})}$ in the time and spatial variables.  Note that the boundary component at $t_{1}$ is disregarded by causality of the equation.  A key step in the proof is to decompose $\hat{u}$, for any fixed $(w,k)$, as
\begin{equation}\label{sre7}
\hat{u}(w,k,\theta) = \big(\rho_{\epsilon}\star \hat{u}\big)(w,k,\theta) + \big[ \hat{u}(w,k,\theta) - \big(\rho_{\epsilon}\star \hat{u}\big)(w,k,\theta) \big]\,,
\end{equation}
and observe that, thanks to \eqref{sre1} - \eqref{sre4} and Proposition 2.1, the error can be estimated in terms of the regularity in the variable $\theta$ as
\begin{align}
& \quad \,
\big\| \hat{u}(w,k,\cdot) - \big(\rho_{\epsilon} \star \hat{u}\big)(w,k,\cdot) \big\|^{2}_{ L^{2}(\mathbb{S}^{d-1}) } \nonumber
\\
&= \int_{\mathbb{S}^{d-1}}\Big| \int_{\mathbb{S}^{d-1}}\rho_{\epsilon}\big( 1 - \theta\cdot\theta'\big)\big(\hat{u}(w,k,\theta) - \hat{u}(w,k,\theta')\big)\,\dtheta' \Big|^{2}\,\dtheta \nonumber
\\
&\leq \int_{\mathbb{S}^{d-1}} \int_{\mathbb{S}^{d-1}}\rho_{\epsilon}\big( 1 - \theta\cdot\theta'\big)\big|\hat{u}(w,k,\theta) - \hat{u}(w,k,\theta')\big|^{2}\,\dtheta'\,\dtheta \label{sre8}
\\
&\lesssim \frac{\epsilon^{s}}{C_{\epsilon}}\int_{\mathbb{S}^{d-1}} \int_{\mathbb{S}^{d-1}}\frac{\big|\hat{u}(w,k,\theta) - \hat{u}(w,k,\theta')\big|^{2}}{( 1 - \theta\cdot\theta')^{\frac{d-1}{2} + s} }\,\dtheta'\,\dtheta \nonumber
\\
& = -\frac{2\,\epsilon^{s}}{C_{\epsilon}\,b(1)}\int_{\mathbb{S}^{d-1}} \mathcal{I}_{b(1)}(\hat{u})(w,k,\theta)\,\overline{\hat{u}(w,k,\theta)}\,\dtheta \lesssim \frac{\epsilon^{s}}{C_{\epsilon}} \big\|(-\Delta_{v})^{s/2}\widehat{w_{\mathcal{J}}}(w,k,\cdot) \big\|^{2}_{L^{2}(\mathbb{R}^{d-1})}\,. \nn
\end{align}
Let us estimate the term $\rho_{\epsilon}\star \hat{u}$, for each fixed $(w,k)$, which we compute from \eqref{sre6.5}
\begin{equation}\label{sre9}
\widehat{u} = \frac{ \lambda\, \widehat{u} + \mathcal{I}(\hat{u}) + \hat{u}(t_0,k,\theta)\,e^{-iwt_0}}{\lambda + i(w + \theta\cdot k)}\,,
\end{equation}
where $\lambda>0$ is an interpolation parameter depending only on $|k|$ (the parameter $\epsilon$ will depend only on $|k|$ as well).  Formulas \eqref{smooth9} and \eqref{sre9} in turn lead to
\begin{align}
\big(\rho_{\epsilon}\star \widehat{u}\big)(w,k,\theta) &= \int_{\mathbb{S}^{d-1}}\rho_{\epsilon}\big(1 - \theta\cdot\theta' \big)\,\frac{\lambda\,\widehat{u}(w,k,\theta') + \mathcal{I}(\hat{u})(w,k,\theta') + \hat{u}(t_0,k,\theta')\,e^{-iwt_0}}{\lambda + i(w + k\cdot\theta')}\,\dtheta' \nonumber
\\
& =\int_{\mathbb{S}^{d-1}}\rho_{\epsilon}\big(1 - \theta\cdot\theta' \big)\,\frac{\widehat{u}(w,k,\theta') + \frac{1}{\lambda}\mathcal{K}(\hat{u})(w,k,\theta') }{ 1 + i(w + k\cdot\theta')/\lambda}\,\dtheta'   \nonumber
\\
& \hspace{.5cm} -\frac{D}{\lambda}\int_{\mathbb{S}^{d-1}}\rho_{\epsilon}\big(1 - \theta\cdot\theta' \big)\,\frac{(-\Delta_{\theta'})^{s}\hat{u}(w,k,\theta') }{ 1 + i(w + k\cdot\theta')/\lambda}\,\dtheta'  \label{sre10}
\\
& \hspace{.5cm} + \frac{1}{\lambda}\int_{\mathbb{S}^{d-1}}\rho_{\epsilon}\big(1 - \theta\cdot\theta' \big)\,\frac{ \hat{u}(t_0,k,\theta')\,e^{-iwt_0} }{ 1 + i(w + k\cdot\theta')/\lambda}\,\dtheta' 
\Denote \text{T}_{1} + \text{T}_{2} + \text{T}_{3}\,, \nn
\end{align}
where $\mathcal{K}: =c_{s,d}\textbf{1} + \mathcal{I}_{h}$ is the bounded part of $\mathcal{I}$.\\\\
\noindent
\textit{Estimating the term $\text{T}_{1}$}.  Simply note that
\begin{align}
\big|\text{T}_{1}(w,k,\theta)\big| \leq \Bigg(\int_{\mathbb{S}^{d-1}}&\frac{\rho_{\epsilon}(1 - \theta\cdot\theta')}{\big| 1 + i(w + k\cdot\theta')/\lambda\big|^{2}}\,\dtheta' \Bigg)^{\frac{1}{2}}\times \nonumber
\\
& \times\Bigg[\Bigg(\int_{\mathbb{S}^{d-1}}\rho_{\epsilon}(1 - \theta\cdot\theta') \big|\widehat{u_{s}}(w,k,\theta')\big|^{2}\,\dtheta' \Bigg)^{\frac{1}{2}} \label{sre10.1} 
\\
&\hspace{1.5cm}+ \frac{1}{\lambda}\Bigg(\int_{\mathbb{S}^{d-1}}\rho_{\epsilon}(1 - \theta\cdot\theta') \big|\mathcal{K}(\hat{u})(w,k,\theta')\big|^{2}\,\dtheta' \Bigg)^{\frac{1}{2}}\Bigg]\,. \nn
\end{align}
The first integral in \eqref{sre10.1} is estimated observing that
\begin{equation*}
\rho_{\epsilon}(z) \lesssim \frac{1}{C_{\epsilon}\,\epsilon^{\frac{d-1}{2}}}\,\text{1}_{\{|z|\leq\epsilon\}}\quad\quad\text{and}\quad\quad |\theta - \theta'|^{2} = 2\,\big(1-\theta\cdot\theta'\big)\,.
\end{equation*}
Additionally, choosing $\hat{k}$ as the north pole of $\mathbb{S}^{d-1}$ we can decompose any vector $\theta\in\mathbb{S}^{d-1}$ as $\theta = (\theta\cdot\hat{k})\,\hat{k} + \theta_{\perp}$ with $\theta_{\perp}\in\mathbb{D}^{d-1}$.   It follows that
\begin{align}\label{sre11}
\rho_{\epsilon}\big(1 - \theta\cdot\theta' \big) &\lesssim \frac{1}{C_{\epsilon}\,\epsilon^{\frac{d-1}{2}}}\,\text{1}_{\{|\theta - \theta'|^{2}\leq 2\epsilon\}} \nonumber\\
&= \frac{1}{C_{\epsilon}\,\epsilon^{\frac{d-1}{2}}}\,\text{1}_{\{|\theta \cdot \hat{k} - \theta'\cdot\hat{k}|^{2} + |\theta_{\perp} - \theta'_{\perp}|^{2} \leq 2\epsilon\}} \\
&\leq \frac{1}{C_{\epsilon}\,\epsilon^{\frac{d-1}{2}}}\,\text{1}_{\{|\theta \cdot \hat{k} - \theta'\cdot\hat{k}|^{2} \leq 2\epsilon\}}\,\text{1}_{ \{|\theta_{\perp} - \theta'_{\perp}|^{2} \leq 2\epsilon\}}\,. \nn
\end{align}
In this way, using \eqref{sre11} we can establish  that
\begin{align}\label{sre12}
& \quad \,
\int_{\mathbb{S}^{d-1}}\frac{\rho_{\epsilon}\big(1 - \theta\cdot\theta' \big)}{\big|1 + i(w + k\cdot\theta')/\lambda \big|^{2}}\,\dtheta' \nonumber\\
&\leq \frac{1}{C_{\epsilon}\,\epsilon^{\frac{d-1}{2}}}\int^{\pi}_{0}\frac{ \text{1}_{\{|\theta \cdot \hat{k} - \cos(\alpha)|^{2} \leq 2\epsilon\}} }{ \big|1 + i(w + |k|\cos(\alpha))/\lambda \big|^{2} } \Bigg(\int_{\mathbb{S}^{d-2}} \text{1}_{ \{|\theta_{\perp} -\sin(\alpha)\sigma |^{2} \leq 2\epsilon\}}\,\text{d}\sigma\,\sin^{d-2}(\alpha)\Bigg)\,\text{d}\alpha \nonumber\\
&\lesssim \frac{1}{C_{\epsilon}\,\sqrt{\epsilon}}\int^{\pi}_{0}\frac{ \text{1}_{\{|\theta \cdot \hat{k} - \cos(\alpha)|^{2} \leq 2\epsilon\}} }{ \big|1 + i(w + |k|\cos(\alpha))/\lambda \big|^{2} } \,\text{d}\alpha\,.
\end{align}
The last integral in \eqref{sre12} can be estimated using Parseval's theorem
\begin{align}\label{sre13}
\frac{1}{C_{\epsilon}\,\sqrt{\epsilon}}\int^{\pi}_{0}\frac{ \text{1}_{\{|\theta \cdot \hat{k} - \cos(\alpha)|^{2} \leq 2\epsilon\}} }{ \big|1 + i(w + |k|\cos(\alpha))/\lambda \big|^{2} }& \,\text{d}\alpha \lesssim \frac{1}{C_{\epsilon}\,\sqrt{\epsilon}}\int^{1}_{-1}\frac{ 1 }{ 1 + \big( ( w + |k|\,z)/\lambda \big)^{2} } \, \frac{1}{\sqrt{1 - z^{2}}}\dz \nonumber\\
& \hspace{-4cm}= \frac{1}{\sqrt{2}\,C_{\epsilon}\,\sqrt{\epsilon}}\int^{\infty}_{-\infty}\frac{ 1 }{ 1 + \big( ( w + |k|\,z)/\lambda \big)^{2} } \, \Bigg( \frac{1}{|1-z|^{\frac{1}{2}}} + \frac{1}{|1+z|^{ \frac{1}{2} } }\Bigg)\dz \\
& \hspace {-4cm} = \frac{C}{\sqrt{2}\,C_{\epsilon}\,\sqrt{\epsilon}}\frac{\lambda}{2\,|k|}\int^{\infty}_{-\infty} e^{-i\frac{w}{|k|}\xi}\,e^{-\frac{\lambda}{|k|}|\xi|}\,\frac{\cos(\xi)}{|\xi|^{\frac{1}{2}}}\,\dxi \lesssim \frac{1}{C_{\epsilon}\,\sqrt{\epsilon}}\sqrt{ \frac{\lambda}{|k|} }\,. \nn
\end{align}
Using \eqref{sre13} in \eqref{sre10.1} one obtains that the $L^{2}_{\theta}$-norm of $\text{T}_{1}$ is estimated by
\begin{align}\label{sre15}
& \quad \,
\big\| \text{T}_{1}(w,k,\cdot) \big\|_{L^{2}(\mathbb{S}^{d-1})}\nonumber
\\
& \leq C\,\Bigg(\frac{1}{\sqrt{\epsilon}}\sqrt{ \frac{\lambda}{|k|} }\Bigg)^{\frac{1}{2}}\Big( \big\|\widehat{u}(w,k,\cdot) \big\|_{L^{2}(\mathbb{S}^{d-1})} + \tfrac{1}{\lambda}\big\|\mathcal{K}(\widehat{u})(w,k,\cdot) \big\|_{L^{2}(\mathbb{S}^{d-1})} \Big)\,.
\end{align}
\textit{Estimating the term $T_{2}$}.  Let us use the stereographic projection and the definition of the operator $(-\Delta_{\theta})^{s}$ to obtain
\begin{align}\label{sre16}
\text{T}_{2}(w,k,\theta) &= -2^{d-1}\,\frac{D}{\lambda}\int_{\mathbb{R}^{d-1}}\rho_{\epsilon}\big(1 - \theta\cdot\mathcal{J}(v) \big)\,\frac{\big[(-\Delta_{\theta'})^{s}\hat{u}\big]_{\mathcal{J}}(w,k,v) }{ 1 + i(w + k\cdot\mathcal{J}(v))/\lambda}\,\frac{\dv}{\langle v \rangle^{d-1}} \nn
\\
& \hspace{-1cm}= -2^{d-1}\,\frac{D}{\lambda}\int_{\mathbb{R}^{d-1}} \frac{\rho_{\epsilon}\big(1 - \theta\cdot\mathcal{J}(v) \big)}{ \big(1 + i(w + k\cdot\mathcal{J}(v))/\lambda\big)\,\langle v \rangle^{\frac{d-1}{2} - s} }\,(-\Delta_{v})^{s}\widehat{w_{\mathcal{J}}}(w,k,v)\,\dv  
\\
& \hspace{-1cm}=-2^{d-1}\,\frac{D}{\lambda}\int_{\mathbb{R}^{d-1}} \nabla_{v}\Bigg[\frac{\rho_{\epsilon}\big(1 - \theta\cdot\mathcal{J}(v) \big)}{ \big(1 - i(w + k\cdot\mathcal{J}(v))/\lambda\big)\,\langle v \rangle^{\frac{d-1}{2} - s} }\Bigg]\cdot\nabla^{2s-1}_{v}\widehat{w_{\mathcal{J}}}(w,k,v)\,\dv\,, \nn
\end{align}
where the fractional gradient operator $\nabla^{2s-1}$ is defined by Fourier transform as
\begin{equation*}
\mathcal{F}\{\nabla^{2s-1}_{v}\psi\}(\xi)=-i|\xi|^{2s-1}\,\hat{\xi}\,\mathcal{F}\{\psi\}(\xi)\,.
\end{equation*}
Now, explicitly compute the gradient inside the last integral in \eqref{sre16} to obtain 3 terms, namely,
\begin{align*}
& \quad \,
\nabla_{v}\Bigg[\frac{\rho_{\epsilon}\big(1 - \theta\cdot\mathcal{J}(v) \big)}{ \big(1 - i(w + k\cdot\mathcal{J}(v))/\lambda\big)\,\langle v \rangle^{\frac{d-1}{2} - s} }\Bigg] \\
&= \frac{\nabla_{v}\rho_{\epsilon}\big(1 - \theta\cdot\mathcal{J}(v) \big)}{ \big(1 - i(w + k\cdot\mathcal{J}(v))/\lambda\big)\,\langle v \rangle^{\frac{d-1}{2} - s} }
+\frac{\rho_{\epsilon}\big(1 - \theta\cdot\mathcal{J}(v) \big)}{\langle v \rangle^{\frac{d-1}{2} - s}}\, \nabla_{v} \frac{1}{\big(1 - i(w + k\cdot\mathcal{J}(v))/\lambda\big) } 
\\
& \quad \,
+ \frac{\rho_{\epsilon}\big(1 - \theta\cdot\mathcal{J}(v) \big)}{\big(1 - i(w + k\cdot\mathcal{J}(v))/\lambda\big)}\,\nabla_{v}\frac{1}{\langle v \rangle^{\frac{d-1}{2} - s} }\,,
\end{align*}
which give us the decomposition $\text{T}_{2} = \text{T}^{1}_{2} + \text{T}^{2}_{2} + \text{T}^{3}_{2}$ respectively.  Additionally, note that for any vector $x\in\mathbb{R}^{d-1}$
\begin{equation}\label{sre17}
\big|\nabla_{v} \big( x \cdot \mathcal{J}(v) \big)\big|\lesssim \frac{|x|}{\langle v \rangle}\,,
\end{equation}
that leads to the estimate for $\text{T}^{1}_{2}$:
\begin{align}\label{sre18}
\big|\text{T}^{1}_{2}(w,k,\theta)\big| &\lesssim \frac{D}{\lambda}\int_{\mathbb{R}^{d-1}} \frac{\big| \rho'_{\epsilon}\big(1 - \theta\cdot\mathcal{J}(v) \big)\big| }{ \big|1 - i(w + k\cdot\mathcal{J}(v))/\lambda\big|\,\langle v \rangle^{\frac{d-1}{2} - s+1} } \Big| \nabla^{2s-1}_{v}\widehat{w_{\mathcal{J}}}(w,k,v)\Big|\,\dv \nonumber\\
&\leq \frac{D}{\lambda}\Bigg(\int_{\mathbb{R}^{d-1}} \frac{\big| \rho'_{\epsilon}\big(1 - \theta\cdot\mathcal{J}(v) \big)\big| }{ \big|1 - i(w + k\cdot\mathcal{J}(v))/\lambda\big|^{q}\,\langle v \rangle^{(\frac{d-1}{2} - s+1)q} }\,\dv\Bigg)^{\frac{1}{q}} \times \\
&\hspace{1cm}\times\Bigg(\int_{\mathbb{R}^{d-1}} \big| \rho'_{\epsilon}\big(1 - \theta\cdot\mathcal{J}(v) \big)\big|\,\Big| \nabla^{2s-1}_{v}\widehat{w_{\mathcal{J}}}(w,k,v)\Big|^{p}\,\dv\Bigg)^{\frac{1}{p}}\,,\quad \frac{1}{q}+\frac{1}{p}=1\,. \nn
\end{align}
Using Sobolev embedding one has
\begin{align}\label{sre18.5}
\big\|\nabla^{2s-1}_{v}\widehat{w_{\mathcal{J}}}(w,k,\cdot) \big\|_{L^{p}(\mathbb{R}^{d-1})} &\leq C_{d,s}\big\|\nabla^{s}_{v}\widehat{w_{\mathcal{J}}}(w,k,\cdot) \big\|_{L^{2}(\mathbb{R}^{d-1})} \nonumber\\
&\hspace{-1cm}= C_{d,s}\big\|(-\Delta_{v})^{s/2}\widehat{w_{\mathcal{J}}}(w,k,\cdot) \big\|_{L^{2}(\mathbb{R}^{d-1})}\,,
\end{align}
for $\frac{1}{2} - \frac{1-s}{d-1} = \frac{1}{p}$. This defines our choice of $p:=p(d,s)>2$ in \eqref{sre18}.  In this way, 
\begin{align*}
   \big(\tfrac{d-1}{2} - s+1\big)\,q=d-1\,,
\end{align*}
and estimate \eqref{sre18} reduces to
\begin{align}\label{sre19}
\big|\text{T}^{1}_{2}(w,k,\theta)\big| &\lesssim \frac{D}{\lambda}\Bigg(\int_{\mathbb{S}^{d-1}} \frac{\big| \rho'_{\epsilon}\big(1 - \theta\cdot\theta' \big)\big| }{ \big|1 - i(w + k\cdot\theta')/\lambda\big|^{q} }\,\dtheta'\Bigg)^{\frac{1}{q}}\times\nonumber\\
&\hspace{1cm}\times\Bigg(\int_{\mathbb{R}^{d-1}} \big| \rho'_{\epsilon}\big(1 - \theta\cdot\mathcal{J}(v) \big)\big|\,\Big| \nabla^{2s-1}_{v}\widehat{w_{\mathcal{J}}}(w,k,v)\Big|^{p}\,\dv\Bigg)^{\frac{1}{p}}\,.
\end{align}
It follows, after estimating the integral in the sphere as previously done for the term $\text{T}_{1}$, that
\begin{equation}\label{sre20}
\int_{\mathbb{S}^{d-1}}\frac{\big|\rho'_{\epsilon}\big(1 - \theta\cdot\theta' \big)\big|}{\big|1 - i(w + k\cdot\theta')/\lambda \big|^{q}}\,\dtheta' \leq \frac{C}{\epsilon\,\sqrt{\epsilon} }\sqrt{\frac{\lambda}{|k|}}\,.
\end{equation}
Thus, estimates \eqref{sre18.5}, \eqref{sre19} and \eqref{sre20} lead to
\begin{equation}\label{sre21}
\big\|\text{T}^{1}_{2}(w,k,\cdot)\big\|_{L^{p}(\mathbb{S}^{d-1})}\leq \frac{D}{\lambda\,\epsilon}\Bigg(\frac{1}{\sqrt{\epsilon} }\sqrt{\frac{\lambda}{|k|}}\Bigg)^{\frac{1}{q}}\,\big\|(-\Delta_{v})^{s/2}\widehat{w_{\mathcal{J}}}(w,k,\cdot) \big\|_{L^{2}(\mathbb{R}^{d-1})}\,.
\end{equation}
Similarly, the term $\text{T}^{2}_{2}$ is simply computed as
\begin{align*}
\big|\text{T}^{2}_{2}(w,k,\theta)\big| &\lesssim \frac{D\,|k|}{\lambda^{2}}\int_{\mathbb{R}^{d-1}} \frac{ \rho_{\epsilon}\big(1 - \theta\cdot\mathcal{J}(v) \big) }{ \big|1 - i(w + k\cdot\mathcal{J}(v))/\lambda\big|^{2}\,\langle v \rangle^{\frac{d-1}{2} - s+1} } \Big| \nabla^{2s-1}_{v}\widehat{w_{\mathcal{J}}}(w,k,v)\Big|\,\dv \nonumber\\
&\leq \frac{D\,|k|}{\lambda^{2}}\Bigg(\int_{\mathbb{S}^{d-1}} \frac{\rho_{\epsilon}\big(1 - \theta\cdot\theta' \big) }{ \big|1 - i(w + k\cdot\theta')/\lambda\big|^{2q}}\,\dtheta'\Bigg)^{\frac{1}{q}} \times \nonumber\\
&\hspace{2cm}\times\Bigg(\int_{\mathbb{R}^{d-1}} \rho_{\epsilon}\big(1 - \theta\cdot\mathcal{J}(v) \big)\,\Big| \nabla^{2s-1}_{v}\widehat{w_{\mathcal{J}}}(w,k,v)\Big|^{p}\,\dv\Bigg)^{\frac{1}{p}} \nonumber\\
&\lesssim \frac{D\,|k|}{\lambda^{2}}\Bigg(\frac{1}{\sqrt{\epsilon}}\sqrt{\frac{\lambda}{|k|}} \Bigg)^{\frac{1}{q}} \Bigg(\int_{\mathbb{R}^{d-1}} \rho_{\epsilon}\big(1 - \theta\cdot\mathcal{J}(v) \big)\,\Big| \nabla^{2s-1}_{v}\widehat{w_{\mathcal{J}}}(w,k,v)\Big|^{p}\,\dv\Bigg)^{\frac{1}{p}}\,,
\end{align*}
where the exponents $p$ and $q$ are those of the term $\text{T}^{1}_{2}$.  Previous estimate lead us to the bound
\begin{equation}\label{sre22}
\big\|\text{T}^{2}_{2}(w,k,\cdot)\big\|_{L^{p}(\mathbb{S}^{d-1})}\leq \frac{D\,|k|}{\lambda^{2}}\Bigg(\frac{1}{\sqrt{\epsilon}}\sqrt{\frac{\lambda}{|k|}} \Bigg)^{\frac{1}{q}} \,\big\|(-\Delta_{v})^{s/2}\widehat{w_{\mathcal{J}}}(w,k,\cdot) \big\|_{L^{2}(\mathbb{R}^{d-1})}\,.
\end{equation}
For the final term $\text{T}^{3}_{2}$ note that $\nabla_{v} \frac{1}{\langle v \rangle^{\frac{d-1}{2}-s}} = -(\frac{d-1}{2}-s) \frac{2\,v}{\langle v \rangle^{\frac{d-1}{2}-s+1}}$, therefore the stereographic projection leads to
\begin{align}\label{sre23}
\big|\text{T}^{3}_{2}(w,k,\theta)\big| & \lesssim \frac{D}{\lambda}\Bigg(\int_{\mathbb{S}^{d-1}} \frac{\rho_{\epsilon}\big(1 - \theta\cdot\theta' \big) }{ \big|1 - i(w + k\cdot\theta')/\lambda\big|^{q}}\Bigg(\frac{|\theta^{' \perp}|}{1-\theta'_{d}}\Bigg)^{q}\,\dtheta'\Bigg)^{\frac{1}{q}} \times \nonumber\\
&\hspace{2cm}\times\Bigg(\int_{\mathbb{R}^{d-1}} \rho_{\epsilon}\big(1 - \theta\cdot\mathcal{J}(v) \big)\,\Big| \nabla^{2s-1}_{v}\widehat{w_{\mathcal{J}}}(w,k,v)\Big|^{p}\,\dv\Bigg)^{\frac{1}{p}}\,.
\end{align}
Note that $\frac{|\theta^{'\perp}|}{1-\theta'_{d}}\leq \frac{1}{\sin(\alpha)}$, with $\alpha$ the polar angle.  Therefore, the following estimate is valid for any $d\geq3$ (recall that $q\in(0,2)$)
\begin{align}\label{sre24}
& \quad \,
\int_{\mathbb{S}^{d-1}} \frac{\rho_{\epsilon}\big(1 - \theta\cdot\theta' \big)}{\big|1 + i(w + k\cdot\theta')/\lambda \big|^{q}}\Big(\frac{|\theta^{'\perp}|}{1-\theta'_{d}}\Big)^{q}\,\text{d}\theta' \nonumber
\\
& \leq \frac{1}{C_{\epsilon}\,\epsilon^{\frac{d-1}{2}}}\int^{\pi}_{0}\frac{ \text{1}_{\{|\theta \cdot \hat{k} - \cos(\alpha)|^{2} \leq 2\epsilon\}} }{ \big|1 + i(w + |k|\cos(\alpha))/\lambda \big|^{q} } \Bigg(\int_{\mathbb{S}^{d-2}} \text{1}_{ \{|\theta_{\perp} -\sin(\alpha)\sigma |^{2} \leq 2\epsilon\}}\,\text{d}\sigma\,\sin^{d-2-q}(\alpha)\Bigg)\,\text{d}\alpha \nonumber
\\
& \lesssim \frac{1}{C_{\epsilon}\,\epsilon}\int^{\pi}_{0}\frac{ \text{1}_{\{|\theta \cdot \hat{k} - \cos(\alpha)|^{2} \leq 2\epsilon\}} }{ \big|1 + i(w + |k|\cos(\alpha))/\lambda \big|^{q} } \frac{\text{d}\alpha}{\sin(\alpha)^{q-1}} \lesssim \frac{1}{C_{\epsilon}\,\epsilon}\Bigg(\frac{\lambda}{|k|}\Bigg)^{1-\frac{q}{2}}\,.
\end{align}
With estimate \eqref{sre24} we finally conclude that
\begin{equation}\label{sre25}
\big\|\text{T}^{3}_{2}(w,k,\cdot)\big\|_{L^{p}(\mathbb{S}^{d-1})}\leq \frac{D}{\lambda\,\sqrt{\epsilon}}\,\Bigg(\frac{1}{\sqrt{\epsilon}}\sqrt{\frac{\lambda}{|k|}}\Bigg)^{\frac{2-q}{q}}\big\|(-\Delta_{v})^{s/2}\widehat{w_{\mathcal{J}}}(w,k,\cdot) \big\|_{L^{2}(\mathbb{R}^{d-1})}\,.
\end{equation}
\textit{Estimating the boundary term $\text{T}_{3}$} In the same spirit of previous calculations we have
\begin{align}\label{sre26}
\big|T_{3}(w,k,\theta)\big|& =\frac{1}{\lambda}\int_{\mathbb{S}^{d-1}}\rho_{\epsilon}\big(1 - \theta\cdot\theta' \big)\,\frac{ \big|\hat{u}(t_0,k,\theta')\big| }{ \big| 1 + i(w + k\cdot\theta')/\lambda\big|}\,\dtheta'\nonumber
\\
&\leq \frac{1}{\lambda}\Bigg(\int_{\mathbb{S}^{d-1}}\frac{ \rho_{\epsilon}\big(1 - \theta\cdot\theta' \big) }{ \big| 1 + i(w + k\cdot\theta')/\lambda\big|^{2(1-s_0)}}\,\dtheta'\Bigg)^{\frac{1}{2}} \times 
\\
& \qquad \, \times \Bigg(\int_{\mathbb{S}^{d-1}}\rho_{\epsilon}\big(1 - \theta\cdot\theta' \big)\,\frac{ \big|\hat{u}(t_0,k,\theta')\big|^{2} }{ \big| 1 + i(w + k\cdot\theta')/\lambda\big|^{2s_0}}\,\dtheta'\Bigg)^{\frac{1}{2}} \,, \nn
\end{align}
where $s_0\in(\frac{1}{2},1)$ will be chosen in a moment.  Observe that for the first integral
\begin{align}\label{sre27}
& \quad \,
\int_{\mathbb{S}^{d-1}}\frac{ \rho_{\epsilon}\big(1 - \theta\cdot\theta' \big) }{ \big| 1 + i(w + k\cdot\theta')/\lambda\big|^{2(1-s_0)}}\,\dtheta' \nonumber
\\
&\lesssim  \frac{1}{C_{\epsilon}\,\sqrt{\epsilon}}\int^{1}_{-1}\frac{ 1 }{ \big| 1 + \big( ( w + |k|\,z)/\lambda \big)^{2} \big|^{1-s_0} } \, \frac{1}{\sqrt{1 - z^{2}}}\dz 
\\
&\lesssim \frac{1}{\sqrt{\epsilon}}\sqrt{\frac{\lambda}{|k|}}\int^{\infty}_{-\infty}\mathcal{F}\big\{\mathcal{B}_{2(1-s_0)}\big\}(\xi)\,\frac{1}{|\xi|^{\frac{1}{2}}}\,\dxi \lesssim \frac{1}{\sqrt{\epsilon}}\sqrt{\frac{\lambda}{|k|}}\,. \nn
\end{align}
Recall that $\mathcal{B}_{2(1-s_0)}$ is the Bessel potential of order $2(1-s_0)$, thus, previous estimate is valid for $s_0$ sufficiently close to $\frac{1}{2}$ and such that the singularity at $\xi=0$ becomes integrable.  More precisely, from the short discussion in the appendix about Bessel potentials one notices that any $s_0\in(\frac{1}{2},\frac{3}{4})$ will do.  Plug estimate \eqref{sre27} in \eqref{sre26} and integrating in $(w,\theta)$ variables to obtain
\begin{equation}\label{sre28}
\big\|\text{T}_{3}(\cdot,k,\cdot)\big\|_{L^{2}(\mathbb{R}\times\mathbb{S}^{d-1})}\leq \frac{C}{\sqrt{\lambda}}\,\Bigg(\frac{1}{\sqrt{\epsilon}}\sqrt{\frac{\lambda}{|k|}}\Bigg)^{\frac{1}{2}}\big\|\hat{u}(t_0,k,\cdot) \big\|_{L^{2}(\mathbb{S}^{d-1})}\,.
\end{equation}
\textit{Conclusion of the proof.}  From the decomposition \eqref{sre7} and estimates \eqref{sre8}, \eqref{sre15}, \eqref{sre21}, \eqref{sre22}, \eqref{sre25} and \eqref{sre28} one concludes
\begin{align}\label{sre29}
\big\|\hat{u}(\cdot,k,\cdot)\big\|_{L^{2}(\mathbb{R}\times\mathbb{S}^{d-1})} &\leq \Bigg(\frac{1}{\sqrt{\epsilon}}\sqrt{ \frac{\lambda}{|k|} }\Bigg)^{\frac{1}{2}}\Big( \big\|\widehat{u}(\cdot,k,\cdot) \big\|_{L^{2}(\mathbb{R}\times\mathbb{S}^{d-1})} + \tfrac{1}{\lambda}\big\|\mathcal{K}(\widehat{u})(\cdot,k,\cdot) \big\|_{L^{2}(\mathbb{R}\times\mathbb{S}^{d-1})} \Big) \nonumber\\
&\hspace{-2.8cm} + \frac{C}{\sqrt{\lambda}}\,\Bigg(\frac{1}{\sqrt{\epsilon}}\sqrt{\frac{\lambda}{|k|}}\Bigg)^{\frac{1}{2}}\big\|\hat{u}(t_0,k,\cdot) \big\|_{L^{2}(\mathbb{S}^{d-1})} + D\Bigg(\frac{1}{\lambda\,\epsilon}\Bigg(\frac{1}{\sqrt{\epsilon} }\sqrt{\frac{\lambda}{|k|}}\Bigg)^{\frac{1}{q}}+\frac{|k|}{\lambda^{2}}\Bigg(\frac{1}{\sqrt{\epsilon}}\sqrt{\frac{\lambda}{|k|}} \Bigg)^{\frac{1}{q}}+ \nonumber\\
&\hspace{0.5cm}\frac{1}{\lambda\,\sqrt{\epsilon}}\,\Bigg(\frac{1}{\sqrt{\epsilon}}\sqrt{\frac{\lambda}{|k|}}\Bigg)^{\frac{2-q}{q}}+\epsilon^{\frac{s}{2}}\Bigg) \big\|(-\Delta_{v})^{s/2}\widehat{w_{\mathcal{J}}}(\cdot,k,\cdot) \big\|_{L^{2}(\mathbb{R}\times\mathbb{R}^{d-1})}\,.
\end{align}
With estimate \eqref{sre29} we are looking to find control for $|k|$ large.  Indeed, assume that $|k|\geq 1$ and set $\epsilon = \frac{1}{|k|^{a}}$ and $\lambda=|k|^{b}$ with $a, b >0$.  Since we expect that
\begin{equation*}
\frac{1}{\sqrt{\epsilon}}\sqrt{ \frac{\lambda}{|k|} } \sim \frac{1}{|k|^{s_0}}\,,\quad s_0>0\,,
\end{equation*}
we can control the term
\begin{equation*}
\frac{|k|}{\lambda^{2}}\Bigg(\frac{1}{\sqrt{\epsilon}}\sqrt{\frac{\lambda}{|k|}} \Bigg)^{\frac{1}{q}} \quad \text{by choosing} \quad \frac{|k|}{\lambda^{2}}=1\,,
\end{equation*}
that is, choosing $b=\frac{1}{2}$.  Using that $q\in(1,2)$ one concludes that the leading terms are
\begin{equation*}
\Bigg(\frac{1}{\sqrt{\epsilon}}\sqrt{ \frac{\lambda}{|k|} }\Bigg)^{\frac{1}{2}}\,,\;\;\frac{1}{\lambda\,\sqrt{\epsilon}}\,\Bigg(\frac{1}{\sqrt{\epsilon}}\sqrt{\frac{\lambda}{|k|}}\Bigg)^{\frac{2-q}{q}}\,,\;\;\text{and}\;\;\epsilon^{\frac{s}{2}}\,.
 \end{equation*}
Therefore, the best option independent of the dimension is choosing $a$ such that
\begin{equation*}
\max\Big\{\Bigg(\frac{1}{\sqrt{\epsilon}}\sqrt{ \frac{\lambda}{|k|} }\Bigg)^{\frac{1}{2}}\,,\frac{1}{\lambda\,\sqrt{\epsilon}} \Big\} = \epsilon^{\frac{s}{2}}\,.
\end{equation*}
A simple calculation shows that $a=\frac{1/2}{2s+1}$, and therefore, from \eqref{sre29} one concludes that
\begin{align*}\label{sre20}
\big\|\hat{u}(\cdot,k,\cdot)&\big\|_{L^{2}(\mathbb{R}\times\mathbb{S}^{d-1})}\leq \frac{C}{|k|^{\frac{s/4}{2s+1}}}\Bigg( \big\|\widehat{u}(\cdot,k,\cdot) \big\|_{L^{2}(\mathbb{R}\times\mathbb{S}^{d-1})} + \big\|\mathcal{K}(\widehat{u})(\cdot,k,\cdot) \big\|_{L^{2}(\mathbb{R}\times\mathbb{S}^{d-1})} \\
&+\big\|(-\Delta_{v})^{s/2}\widehat{w_{\mathcal{J}}}(\cdot,k,\cdot) \big\|_{L^{2}(\mathbb{R}\times\mathbb{R}^{d-1})} + \big\|\hat{u}(t_0,k,\cdot) \big\|_{L^{2}(\mathbb{S}^{d-1})} \Bigg)\,,\quad |k|\geq 1\,.
\end{align*}
This inequality proves the result recalling that $\mathcal{K}$ is a bounded operator in $L^{2}(\mathbb{S}^{d-1})$.
\end{proof}

\begin{Corollary}\label{Cor:strong-reg-avg-time}
Let $u$ be a solution to~\eqref{sr:main} which satisfies the conditions in Theorem \ref{st-reg-lemma}. Then for any $t_\ast \in (t_0, t_1)$, we have
\begin{equation} 
    \label{est:time-average}
\begin{aligned}
     \big\|(-\Delta_{x})^{\frac{s_0}{2}} u \big\|_{L^{2}([t_\ast,t_1)\times\mathbb{R}^{d}\times\mathbb{S}^{d-1})}
&\leq 
    C\,\Big( \frac{1}{\sqrt{t_\ast - t_0}} + 1 \Big) \big\|u \big\|_{L^{2}([t_0,t_1)\times\mathbb{R}^{d}\times\mathbb{S}^{d-1})} \\
& \quad \,
     + C\big\|(-\Delta_{v})^{s/2}w_{\mathcal{J}} \big\|_{L^{2}([t_0,t_1)\times\mathbb{R}^{d}\times\mathbb{R}^{d-1})}\,,
\end{aligned}
\end{equation}
where $s_0=\frac{s/4}{2s+1}$. 
\end{Corollary}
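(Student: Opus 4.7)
The plan is to reduce the corollary to Theorem~\ref{st-reg-lemma} by a standard averaging argument in time that absorbs the boundary term $\|u(t_0)\|_{L^2_{x,\theta}}$ into the spacetime $L^2$ norm of $u$. The key observation is that Theorem~\ref{st-reg-lemma} holds on \emph{any} sub-interval $[\tilde t_0, t_1)$, not just on $[t_0, t_1)$, so we have freedom to pick a convenient intermediate initial time $\tilde t_0 \in (t_0, t_\ast)$.

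First, by Chebyshev/mean value, since
\begin{equation*}
     \int_{t_0}^{t_\ast} \|u(\tau)\|_{L^2_{x,\theta}}^2 \dtau
   \leq \|u\|_{L^2([t_0,t_1)\times \R^d\times\Ss^{d-1})}^2 \,,
\end{equation*}
there exists $\tilde t_0 \in (t_0, t_\ast)$ such that
\begin{equation*}
    \|u(\tilde t_0)\|_{L^2_{x,\theta}}^2
  \leq \frac{1}{t_\ast - t_0} \|u\|_{L^2([t_0,t_1)\times\R^d\times\Ss^{d-1})}^2 \,.
\end{equation*}
(Strictly speaking, one may need to know $u \in \CC([t_0, t_1); L^2_{x,\theta})$ to evaluate $u$ at a specific time; this is guaranteed by the hypothesis of Theorem~\ref{st-reg-lemma}, and otherwise one can work with a mollification in time and pass to the limit.)

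Next, apply Theorem~\ref{st-reg-lemma} on the shortened interval $[\tilde t_0, t_1)$, using that $u$ still satisfies the transport equation there. Since $t_\ast > \tilde t_0$, the monotonicity $[t_\ast, t_1) \subset [\tilde t_0, t_1)$ gives
\begin{equation*}
    \|(-\Delta_x)^{s_0/2} u\|_{L^2([t_\ast, t_1) \times \R^d \times \Ss^{d-1})}
 \leq \|(-\Delta_x)^{s_0/2} u\|_{L^2([\tilde t_0, t_1) \times \R^d \times \Ss^{d-1})} \,,
\end{equation*}
and the right-hand side is controlled by
\begin{equation*}
    C \Big(\|u(\tilde t_0)\|_{L^2_{x,\theta}}
       + \|u\|_{L^2([\tilde t_0, t_1)\times \R^d\times \Ss^{d-1})}
       + \|(-\Delta_v)^{s/2} w_\CalJ\|_{L^2([\tilde t_0, t_1) \times \R^d \times \R^{d-1})}\Big) \,.
\end{equation*}
Using $\|u\|_{L^2([\tilde t_0, t_1) \cdots)} \leq \|u\|_{L^2([t_0, t_1) \cdots)}$ on the last two terms and the bound on $\|u(\tilde t_0)\|_{L^2_{x,\theta}}$ from the previous step gives exactly~\eqref{est:time-average}.

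I do not see a serious obstacle here: the proof is essentially a one-line pigeonhole combined with monotonicity in the time interval. The only mildly technical point is justifying that the Theorem~\ref{st-reg-lemma} estimate applies with $\tilde t_0$ (i.e.\ that $u(\tilde t_0, \cdot, \cdot)$ is a well-defined $L^2_{x,\theta}$ function for a.e.\ choice of $\tilde t_0$), which follows from the continuity-in-time assumption built into the hypothesis of Theorem~\ref{st-reg-lemma}. One could alternatively average the Theorem's estimate over $\tilde t_0 \in (t_0, t_\ast)$ rather than choose a single good time, which would bypass the pointwise-in-time issue; both routes yield the factor $(t_\ast - t_0)^{-1/2}$.
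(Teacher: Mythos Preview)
Your proof is correct and essentially the same approach as the paper's: the paper averages the squared Theorem~\ref{st-reg-lemma} estimate over $\tau \in [t_0, t_\ast]$ rather than picking a single good time via the mean value argument, which is exactly the alternative you mention in your final remark. Either route converts the boundary term $\|u(\tau)\|_{L^2_{x,\theta}}$ into $\frac{1}{\sqrt{t_\ast - t_0}}\|u\|_{L^2([t_0,t_1)\times\R^d\times\Ss^{d-1})}$, so there is no substantive difference.
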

\begin{proof}
Let $\tau \in [t_0, t_1)$ be arbitrary. Then~\eqref{sr:main:estimate} gives
\begin{align*}
     \big\|(-\Delta_{x})^{\frac{s_0}{2}} u &\big\|_{L^{2}([\tau,t_1)\times\mathbb{R}^{d}\times\mathbb{S}^{d-1})}^2
\leq 
    C\,\Big(  \big\| u(\tau)\big\|_{L^{2}(\mathbb{R}^{d}\times\mathbb{S}^{d-1})}^2 
                + \big\|u \big\|_{L^{2}([t_0,t_1)\times\mathbb{R}^{d}\times\mathbb{S}^{d-1})}^2  \nonumber\\
&\hspace{1cm}
               +\big\|(-\Delta_{v})^{s/2}w_{\mathcal{J}} \big\|_{L^{2}([t_0,t_1)\times\mathbb{R}^{d}\times\mathbb{R}^{d-1})}^2 \Big)\,,
\quad\quad s_0=\frac{s/4}{2s+1}\,.
\end{align*}
Taking the average of the above inequality over $[t_0, t_\ast]$, we have
\begin{align*}
& \quad \,
    \big\|(-\Delta_{x})^{\frac{s_0}{2}} u \big\|_{L^{2}([t_\ast,t_1)\times\mathbb{R}^{d}\times\mathbb{S}^{d-1})}^2
\leq    \frac{1}{t_\ast - t_0}\int_{t_0}^{t_*} \big\|(-\Delta_{x})^{\frac{s_0}{2}} u \big\|_{L^{2}([\tau,t_1)\times\mathbb{R}^{d}\times\mathbb{S}^{d-1})}^2 {\, \rm d}\tau
\\
&\leq 
    C\,\Big( \frac{1}{t_\ast - t_0}\int_{t_0}^{t_*} \big\| u(\tau)\big\|_{L^{2}(\mathbb{R}^{d}\times\mathbb{S}^{d-1})}^2  {\, \rm d}\tau
                + \norm{u}_{L^{2}([t_0,t_1)\times\mathbb{R}^{d}\times\mathbb{S}^{d-1})}^2 +
\\
& \hspace{7cm}  
               +\norm{(-\Delta_{v})^{s/2}w_{\mathcal{J}}}_{L^{2}([t_0,t_1)\times\mathbb{R}^{d}\times\mathbb{R}^{d-1})}^2 \Big)
\\
&\leq 
    C\,\Big( \frac{1}{t_\ast - t_0}\int_{t_0}^{t_1} \big\| u(\tau)\big\|_{L^{2}(\mathbb{R}^{d}\times\mathbb{S}^{d-1})}^2  {\, \rm d}\tau
                + \big\|u \big\|_{L^{2}([t_0,t_1)\times\mathbb{R}^{d}\times\mathbb{S}^{d-1})}^2  + 
\\
& \hspace{7cm}
               +\big\|(-\Delta_{v})^{s/2}w_{\mathcal{J}} \big\|_{L^{2}([t_0,t_1)\times\mathbb{R}^{d}\times\mathbb{R}^{d-1})}^2 \Big) \,.
\end{align*}
Inequality~\eqref{est:time-average} is then obtained by taking square root on both sides of the above inequality. 
\end{proof}

\section{A Priori Estimates: Smoothing}
In this section we study the regularity of the solution. In particular, we will show that solutions with $L^1$ initial data will gain immediate smoothness.  Generally speaking, the solution will enjoy higher regularity in the space and time variables.  The solution will enjoy regularity in the angular variable as well, however, this regularity will be tied to the regularity of the scattering kernel $b$.
\subsection{Regularity - From $L^1$ to $L^2$}\label{section:regularity-L-2} 
First we show that solutions with $L^1$ initial data will become $L^2$ for any positive time. In addition, we will use the work done in Section 3 to shown a gain of a fractional derivative in both $x$ and $\theta$.  We start by showing an interpolation between the total mass of the density function $u$ and its fractional derivatives in $x$ and $\Vtheta$. This will give us an $L^{2\omega}$-bound (in time) for $u$ with some $\omega > 1$. Recall the notation
\begin{align*}
    \rho(t, x) = \int_{\Ss^{d-1}} u(t, x, \Vtheta) \dtheta \,.
\end{align*}

Throughout this subsection, we use $c_{d,s,s'}$ to denote any constant that depends only on $d,s,s'$. We use  $c_{1,m_0}$ for any constant that only depends on $d,s, s',m_0$ where $m_0 = \int_{\R^d}\int_{\Ss^{d-1}} u(t, x, \Vtheta) \dtheta \dx$ is the total mass. These constants may change from line to line.

\begin{Proposition} \label{prop:interpolation}
Suppose $u \geq 0$ and $u \in L^2\big([t_0, t] \times \R^d; H^s(\Ss^{d-1})\big)$ for some $s > 0$.  Suppose $\rho \in L^2\big([t_0, t]; H^{s'}(\R^d)\big)$ for some $s' \in (0, 1)$. Then there exists $\omega > 1$ such that $u \in L^{2\omega}\big([t_0, t]; L^2(\R^d \times \Ss^{d-1})\big)$. Moreover, there exists a constant $c_{1,m_0} > 0$ such that
\begin{equation} \nn
     \int_{t_0}^t \|u\|_{L^2_{x, \Vtheta}}^{2\omega}(\tau) \dtau
\leq
     c_{1,m_0}
     \left(\int_{t_0}^t \int_{\R^d} \|u\|_{H^s_\theta}^2 (\tau, x) \dx \dtau
           + \int_{t_0}^t \|(-\Delta_x)^{s'/2} \rho\|_{L^2_x}^2 (\tau) \dtau \right) \,.
\end{equation}
\end{Proposition}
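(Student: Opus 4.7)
The plan is to combine the angular regularity $u\in L^2_{t,x}H^s_\theta$ with the spatial regularity $\rho\in L^2_t H^{s'}_x$ of the angular average through two successive $L^p$-interpolations, each time borrowing an $L^1$ bound from the nonnegativity of $u$ together with mass conservation~\eqref{ConMass}, and then to convert the resulting product estimate into the claimed time-integrated bound via Young's inequality. The crucial feature is that both Sobolev embeddings $H^s_\theta\hookrightarrow L^{p_s}_\theta$ (with $p_s=\tfrac{2(d-1)}{d-1-2s}>2$, available through~\eqref{SE}) and $H^{s'}_x\hookrightarrow L^{q_{s'}}_x$ (with $q_{s'}=\tfrac{2d}{d-2s'}>2$, since $s'\in(0,1)$ and $d\geq3$) are \textit{strict}; this is ultimately what will produce $\omega>1$ rather than just $\omega=1$.

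\textit{Step 1 (interpolation in the angle).} Because $u\geq0$, one has $\|u(\tau,x,\cdot)\|_{L^1_\theta}=\rho(\tau,x)$ pointwise in $(\tau,x)$. Interpolating $L^2_\theta$ between $L^1_\theta$ and $L^{p_s}_\theta$ gives
\begin{align*}
     \|u(\tau,x,\cdot)\|_{L^2_\theta}^{2}
\lesssim
     \rho(\tau,x)^{2(1-\lambda)}\,\|u(\tau,x,\cdot)\|_{L^{p_s}_\theta}^{2\lambda},
\qquad
     \lambda=\tfrac{p_s}{2(p_s-1)}\in(0,1).
\end{align*}
Integrating in $x$, applying H\"older with dual exponents $\tfrac{1}{1-\lambda},\tfrac{1}{\lambda}$, and invoking~\eqref{SE} would then yield
\begin{align*}
     \|u(\tau,\cdot,\cdot)\|_{L^2_{x,\theta}}^{2}
\lesssim
     \|\rho(\tau,\cdot)\|_{L^2_x}^{2(1-\lambda)}\,B(\tau)^{\lambda},
\qquad
     B(\tau):=\int_{\R^d}\|u(\tau,x,\cdot)\|_{H^s_\theta}^{2}\dx.
\end{align*}

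\textit{Step 2 (interpolation in $x$ and closing in $\tau$).} Mass conservation~\eqref{ConMass} gives $\|\rho(\tau,\cdot)\|_{L^1_x}\leq m_0$ uniformly in $\tau$; interpolating $L^2_x$ between $L^1_x$ and $L^{q_{s'}}_x$ and using $H^{s'}_x\hookrightarrow L^{q_{s'}}_x$ produces
\begin{align*}
     \|\rho(\tau,\cdot)\|_{L^2_x}^{2}
\lesssim
     m_0^{2\mu}\,\|\rho(\tau,\cdot)\|_{H^{s'}_x}^{2(1-\mu)},
\qquad
     \mu=\tfrac{q_{s'}-2}{2(q_{s'}-1)}\in(0,1).
\end{align*}
Plugging this into Step 1 and writing $A(\tau):=\|\rho(\tau,\cdot)\|_{H^{s'}_x}^{2}$ yields the pointwise-in-$\tau$ estimate
\begin{align*}
     \|u(\tau,\cdot,\cdot)\|_{L^2_{x,\theta}}^{2}
\leq
     c_{1,m_0}\,A(\tau)^{(1-\mu)(1-\lambda)}\,B(\tau)^{\lambda}.
\end{align*}
Now I would choose $\omega=[1-\mu(1-\lambda)]^{-1}$, which is strictly greater than $1$ since $\mu,1-\lambda\in(0,1)$, and which makes the exponents $\alpha:=(1-\mu)(1-\lambda)\omega$ and $\beta:=\lambda\omega$ (appearing after raising to the power $\omega$) sum to exactly $1$. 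A pointwise application of Young's inequality with dual exponents $\tfrac{1}{\alpha},\tfrac{1}{\beta}$ then converts the product $A^{\alpha}B^{\beta}$ into the sum $\alpha A+\beta B$, and integration over $[t_0,t]$ produces precisely the stated bound.

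The argument is essentially interpolation bookkeeping and presents no genuine obstacle; the one point that must be watched is that under the hypotheses $s>0$, $s'\in(0,1)$ and $d\geq3$ both exponents $p_s$ and $q_{s'}$ are indeed \textit{strictly} larger than $2$, which is exactly what forces $\lambda,\mu\in(0,1)$ and hence $\omega>1$. Were either embedding to degenerate to $L^2$, the method would only reproduce the $L^2_t$ control already present in the hypotheses.
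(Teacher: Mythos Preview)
Your argument is correct. The approach is a reordering of the paper's: the paper first combines the angular and spatial Sobolev embeddings to build an auxiliary $L^{r}_{x,\theta}$ bound (with a carefully chosen $r\in(2,3)$) and only afterwards interpolates against the total mass $m_0=\|u\|_{L^1_{x,\theta}}$, whereas you interpolate in the angle first (using $\|u\|_{L^1_\theta}=\rho$), pass through $\|\rho\|_{L^2_x}$, and then interpolate in $x$ against $\|\rho\|_{L^1_x}\le m_0$. The two routes are algebraically equivalent---a short computation shows your $\omega=[1-\mu(1-\lambda)]^{-1}$ coincides exactly with the paper's $\omega=\tfrac{2(r-1)}{r}$---so neither gains anything over the other; your organization is arguably a bit more transparent since it avoids introducing the intermediate exponent $r$ and its three auxiliary parameters.
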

\begin{proof}
By~\eqref{smooth10} and Sobolev imbedding, for each $\tau \in (t_0, t)$ we have 
\begin{equation} \nn
\begin{aligned}
     \int_{\R^d}\|u(\tau, x, \cdot)\|_{H^s_\theta}^2 \dx
&\geq 
     c_{0, d, s} \int_{\R^d} 
              \Big(\int_{\Ss^{d-1}} u^p(x, \Vtheta) \dtheta\Big)^{2/p} 
            \dx \,,
\\
    \|(-\Delta_x)^{s'/2} \rho(\tau, \cdot)\|_{L^2_x}^2
&\geq 
     c_{0, d, s} \left(\int_{\Ss^{d-1}} \rho^q(x, \Vtheta) \dtheta\right)^{2/q} 
             \,,
\end{aligned}
\end{equation}
where 
\begin{equation} \nn
     \frac{1}{q_2} = \frac 12 - \frac{s'}{d} \,, 
\qquad
     \frac{1}{p_s} = \frac 12 - \frac{s}{d-1} \,.
\end{equation}
Note that $q_2, p_s > 2$. Let
\begin{equation} \label{def:parameters-1}
            \alpha_1 = \frac{2 \, q_2 - 2}{p_s \, q_2 - 2} \in (0, 1) \,, 
\qquad \alpha_2 = \frac{p_s}{2} \, \alpha_1 \in (0, 1) \,,
\qquad  r = p_s \, \alpha_1 + (1 - \alpha_1) > 2 \,,
\end{equation} 
such that 
\begin{equation} \nn
\begin{aligned}
    \frac{\alpha_1}{\alpha_2} = \frac{2}{p_s} \,,
\qquad 
    &\frac{1-\alpha_1}{1-\alpha_2} = q_2 \,, 
\\[3pt]     
     r 
     = 2 + \frac{(p_s - 2)(q_2 - 2)}{p_s \, q_2 - 2} 
     = \, & 3 - \frac{2(p_s + q_2 - 2)}{p_s \, q_2 -2}
     \in (2, 3) \,.
\end{aligned}
\end{equation}
Then by H\"{o}der inequality, we have
\begin{equation} \nn
\begin{aligned}
   \left(\int_{\R^d} \int_{\Ss^{d-1}} u^r \dtheta\dx\right)^{2/r}
&\leq 
   \left(\int_{\R^d} 
      \left(\int_{\Ss^{d-1}} u^{p_s} \dtheta\right)^{\alpha_1} 
      \rho^{1-\alpha_1}
      \dx \right)^{2/r}
\\
&\hspace{-0.5cm}\leq 
   \left(\int_{\R^d} 
      \left(\int_{\Ss^{d-1}} u^{p_s} \dtheta\right)^{\frac{\alpha_1}{\alpha_2}} 
      \dx\right)^{2\alpha_2/r}
   \left(\int_{\R^d} \rho^{\frac{1-\alpha_1}{1-\alpha_2}} \dx \right)^{2(1-\alpha_2)/r}
\\
& \hspace{-0.5cm}= 
   \left(\int_{\R^d} 
      \left(\int_{\Ss^{d-1}} u^{p_s} \dtheta\right)^{2/p_s} 
      \dx\right)^{2\alpha_2/r}
   \left(\int_{\R^d} \rho^{q_2} \dx \right)^{2(1-\alpha_2)/r}
\\
& \hspace{-0.5cm}\leq
    c_{d,s,s'} \left(\int_{\R^d} \|u\|_{H^s_\theta}^2 \dx\right)^{2\alpha_2/r}
    \left(\int_{\R^d} \abs{(-\Delta_x)^{s'/2} \rho}^2 \dx\right)^{(1-\alpha_2)q/r}
\end{aligned}
\end{equation}
Note that by the choice of~\eqref{def:parameters-1}, the parameters satisfy 
\begin{equation} \nn
     \frac{2\alpha_2}{r} + \frac{(1-\alpha_2)q_2}{r} = 1 \,.
\end{equation}
Thus if we integrate in time, then
\begin{equation} \label{est:u-L-r}
\begin{aligned}
\int_{t_0}^t \Big(& \int_{\R^d} \int_{\Ss^{d-1}} u^r \dtheta\dx \Big)^{\frac{2}{r}}\dtau
\\
& \leq
    c_{d,s,s'} \int_{t_0}^t \left(\left(\int_{\R^d}\|u\|_{H^s_\theta}^2 \dx\right)^{\frac{2\alpha_2}{r}}
    \left(\|(-\Delta_x)^{s'/2} \rho\|_{L^2_x}^2 \right)^{\frac{(1-\alpha_2)q_2}{r}} \right)\dtau
\\
& \hspace{0.5cm}\leq
   c_{d,s,s'} \left(\int_{t_0}^t \int_{\R^d}\|u\|_{H^s_\theta}^2 \dx \dtau \right)^{\frac{2\alpha_2}{r}}
   \left(\int_{t_0}^t
    \|(-\Delta_x)^{s'/2} \rho\|_{L^2_x}^2 \dtau\right)^{\frac{(1-\alpha_2)q_2}{r}}
\\
&\hspace{1cm} \leq
   c_{d,s,s'} \left(\int_{t_0}^t \int_{\R^d}\|u\|_{H^s_\theta}^2 \dx\dtau
                + \int_{t_0}^t
    \|(-\Delta_x)^{s'/2} \rho\|_{L^2_x}^2 \dtau\right). 
\end{aligned}
\end{equation}
Let 
\begin{equation} \label{def:p-alpha}
     \omega = \frac{2(r-1)}{r} > 1\,, \qquad \alpha = \frac{1}{r-1} \in (0, 1)
\end{equation}
Then
\begin{equation} \nn
      r \alpha + (1- \alpha) = 2 \,,
\qquad
      \omega = \frac{2}{\alpha r}\,.
\end{equation}
By H\"{o}der inequality and~\eqref{est:u-L-r}, we have
\begin{equation} \nn
\begin{aligned}
   \int_{t_0}^t \left(\int_{\R^d} \int_{\Ss^{d-1}} u^2 \dtheta\dx \right)^{\omega}\dtau
&\leq
   \int_{t_0}^t 
      \left(\int_{\R^d} \int_{\Ss^{d-1}} u^r \dtheta\dx \right)^{\alpha \omega}
      \left(\int_{\R^d} \int_{\Ss^{d-1}} u \dtheta\dx \right)^{(1-\alpha) \omega}
   \dtau
\\
&\hspace{-1.5cm}\leq
   c_{1,m_0} \int_{t_0}^t 
      \left(\int_{\R^d} \int_{\Ss^{d-1}} u^r \dtheta\dx \right)^{2/r}
   \dtau
\\
&\hspace{-1cm} \leq
   c_{1,m_0} \left(\int_{t_0}^t \int_{\R^d}\|u\|_{H^s_\theta}^2 \dx \dtau
                + \int_{t_0}^t
    \|(-\Delta_x)^{s'/2} \rho\|_{L^2_x}^2 \dtau\right) \,,
\end{aligned}
\end{equation}
where $c_{1,m_0} = c_{d,s,s'} \, m_0^{(1-\alpha)\omega}$ is an increasing function in $m_0$.
\end{proof}
Proposition~\ref{prop:interpolation} shows that only spatial regularity is needed on the averaged quantity $\rho$ to obtain a bound on the full norm $\|u\|_{L^{2\omega}([t_0, t]; L^2_{x, \theta})}$.  An immediate corollary of Proposition \ref{Lemma:VA-Weak} is the following.
\begin{Corollary} \label{Cor:3-2}
Suppose $u \in L^2\big((t_0, t_1) \times \R^d \times \mathbb{S}^{d-1}\big)$ is a weak solution to the transport equation~\eqref{rte1} on $[t_0, t_1]$ with $0 < t_0 < t_1 < \infty$. Let $\epsilon_0 = \min\{t_1 - t_0, 1\}$. Then 
\begin{equation} \nn 
\begin{aligned}
     \sup_{t \in (t_0, t_0 + \epsilon_0)} \vpran{\|u\|_{L^2_{x, \Vtheta}}^2(t)}
     + \int_{t_0}^t \int_{\R^d}\|u\|_{H^s_\theta}^2 \dx\dtau
     + \int_{t_0}^t \|(-\Delta_x)^{\beta} \rho\|_{L^2_x}^2 \dtau 
\leq 
    c_2 \|u(t_0, \cdot, \cdot)\|_{L^2_{x, \Vtheta}}^2 \,,
\end{aligned}
\end{equation}
for any $t \in (t_0, t_0 + \epsilon_0)$. Here the constant $c_2$ depends on $d, s, \delta,$ and $\tilde b$ with $\delta$ defined in~\eqref{def:delta}.
\end{Corollary}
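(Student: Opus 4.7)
The plan is to chain together three pieces: the main energy estimate \eqref{MEE2}, a Gronwall bootstrap in time, and then the averaging lemma Proposition~\ref{Lemma:VA-Weak} applied with the forcing identified via the stereographic decomposition of $\mathcal{I}_{b_s}$ from Proposition~\ref{main:smooth}. The first two terms on the left-hand side of the claimed inequality should come straight from \eqref{MEE2} plus Gronwall, while the third term requires feeding the RTE into the averaging lemma.

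First I would apply the energy estimate \eqref{MEE2} on $[t_0,t]$. Dropping the nonnegative $H^s_\theta$-dissipation term, one obtains
\begin{equation*}
   \|u(t)\|^{2}_{L^{2}_{x,\theta}}
   \leq \|u(t_0)\|^{2}_{L^{2}_{x,\theta}}
      + 2D_1 \int_{t_0}^{t} \|u(\tau)\|^{2}_{L^{2}_{x,\theta}} \, {\rm d}\tau \,,
\end{equation*}
so Gronwall yields $\|u(t)\|^{2}_{L^2_{x,\theta}} \leq e^{2D_1 \epsilon_0}\|u(t_0)\|^{2}_{L^2_{x,\theta}}$ for all $t\in[t_0,t_0+\epsilon_0)$; since $\epsilon_0\leq 1$ the constant depends only on $d,s,\tilde b$. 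Plugging this back into \eqref{MEE2} controls $\int_{t_0}^{t}\int_{\R^d}\|u\|^{2}_{H^s_\theta}\dx\dtau$ by the same right-hand side. This handles the first two terms.

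For the third term I would invoke Proposition~\ref{Lemma:VA-Weak} applied to the projected density $u_{\mathcal{J}}$. Using the decomposition $\mathcal{I}_{b_s} = -D(-\Delta_\theta)^s + c_{s,d}\mathbf{1} + \mathcal{I}_{h}$ from \eqref{smooth9} together with the stereographic formula \eqref{main:smooth1}, the RTE in the $v$-variable reads
\begin{equation*}
   \partial_t u_{\mathcal{J}} + \theta(v)\cdot\nabla_x u_{\mathcal{J}}
   = g_1 + \La v\Ra^{(d-1)+2s}(-\Delta_v)^{s/2} g_2\,,
\end{equation*}
with $g_2$ proportional to $(-\Delta_v)^{s/2} w_{\mathcal{J}}$ (coming from the singular part) and $g_1$ collecting the bounded pieces, namely $(c_{s,d}+c_{d,s}\La v\Ra^{2s-(d-1)})u_{\mathcal{J}}$ together with the projected image of $\mathcal{I}_h(u)$, both of which are dominated in $L^2_{t,x,v}$ by $\int \|u\|^{2}_{L^2_{x,\theta}}\dtau$ thanks to \eqref{smooth0.5}. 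Then \eqref{ineq:VA-Weak} (and the Remark following it, which relaxes the $H^s$ requirement on $g_2$ to $L^2$) gives
\begin{equation*}
   \int_{t_0}^{t} \|(-\Delta_x)^{\beta}\rho\|^{2}_{L^2_x}\dtau
   \leq c_{d,s,\delta}\Big(
        \|u(t_0)\|^{2}_{L^2_{x,\theta}}
        + \|u\|^{2}_{L^2_{t,x,v}}
        + \|g_1\|^{2}_{L^2_{t,x,v}}
        + \|g_2\|^{2}_{L^2_{t,x,v}}\Big) \,.
\end{equation*}

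Finally I would absorb each piece on the right: $\|g_2\|^{2}_{L^2_{t,x,v}}$ is exactly $\int\|u\|^{2}_{H^s_\theta}$ up to constants, $\|g_1\|^{2}_{L^2_{t,x,v}}$ is controlled by $\int\|u\|^{2}_{L^2_{x,\theta}}$, and the $L^2_{t,x,v}$-norm of $u_{\mathcal{J}}$ equals (up to the Jacobian $J$) the $L^2_{t,x,\theta}$-norm of $u$. Using the $\sup_t$ bound from Gronwall and $\epsilon_0\leq 1$ to absorb the time length, each of these is bounded by $\|u(t_0)\|^{2}_{L^2_{x,\theta}}$ with a constant depending only on $d,s,\delta,\tilde b$. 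Summing the three bounds produces the desired inequality. The main obstacle is bookkeeping in the second step: one has to check carefully that after switching to projected coordinates the transport equation from \eqref{DWS} literally matches the hypothesis \eqref{eq:transport-orig} of Proposition~\ref{Lemma:VA-Weak}, and that the lower-order $g_1$-type terms picked up from the stereographic representation have $L^2_{v}$-norms controlled pointwise in $(t,x)$ by $\|u(t,x,\cdot)\|_{L^2_{\theta}}$; the weight $\La v\Ra^{-2(d-1)}$ from the Jacobian ensures this via \eqref{smooth0.5}.
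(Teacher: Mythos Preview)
Your proposal is correct and follows essentially the same approach as the paper: invoke the averaging lemma (Proposition~\ref{Lemma:VA-Weak}) with the forcing identified via the stereographic representation of $\mathcal{I}_{b_s}$, then combine with the energy estimate~\eqref{MEE2} and a pointwise-in-time control of $\|u(t)\|_{L^2_{x,\theta}}^2$. The only notable difference is that you use Gronwall on~\eqref{MEE2} to bound $\|u(t)\|_{L^2_{x,\theta}}^2 \leq e^{2D_1\epsilon_0}\|u(t_0)\|_{L^2_{x,\theta}}^2$, whereas the paper simply reads off the non-increase $\|u(t)\|_{L^2_{x,\theta}}^2 \leq \|u(t_0)\|_{L^2_{x,\theta}}^2$ directly from the exact energy identity~\eqref{MEE1} (since $-\int \mathcal{I}_{b_s}(u)\,u\,\dtheta\geq 0$ by the weak formulation~\eqref{wf}); this is slightly cleaner but your route is equally valid.
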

\begin{proof}
Recall that Proposition \ref{Lemma:VA-Weak} gives
\begin{align*} 
     \int_{t_0}^t \|(-\Delta_x)^{\beta} \rho\|_{L^2_{x}}^2 \dtau
\leq
    c_{d,s,\delta} \left(\|u(t_0, \cdot, \cdot)\|_{L^2_{x, \theta}}^2
                  + \|u\|_{L^2_{t,x,\theta}}^2 
                  + \|(-\Delta_v)^{s/2} \WJ\|_{L^2_{t, x, v}}^2
                  \right).
\end{align*}
Hence by the energy estimate~\eqref{MEE2}, 
\begin{equation} \nn 
\begin{aligned}
     \sup_{t \in (t_0, t_0 + \epsilon_0)} &\vpran{\|u\|_{L^2_{x, \Vtheta}}^2(t)}
     + \int_{t_0}^t \int_{\R^d}\|u\|_{H^s_\theta}^2 \dx\dtau
     + \int_{t_0}^t \|(-\Delta_x)^{\beta} \rho\|_{L^2_x}^2 \dtau 
\\
&\leq 
    c_{2,1} \|u(t_0, \cdot, \cdot)\|_{L^2_{x, \Vtheta}}^2
    + c_{2,1} \int_{t_0}^t \|u\|_{L^2_{x, \Vtheta}}^2 \dtau \,.
\end{aligned}
\end{equation}
where $c_{2,1}$ only depends on $d,s, \delta,$ and $\tilde b$. By~\eqref{MEE1}, we have
\begin{align*}
    \|u\|_{L^2_{x, \Vtheta}}^2 (t)
\leq 
   \|u\|_{L^2_{x, \Vtheta}}^2 (t_0) \,,
\qquad \text{for any $t \in (t_0, t_0 + \epsilon_0)$. }
\end{align*}
And thus,
\begin{equation} \nn 
\begin{aligned}
     \sup_{t \in (t_0, \epsilon_0)} &\vpran{\|u\|_{L^2_{x, \Vtheta}}^2(t)}
     + \int_{t_0}^t \int_{\R^d}\|u\|_{H^s_\theta}^2 \dx\dtau
     + \int_{t_0}^t \|(-\Delta_x)^{\beta} \rho\|_{L^2_x}^2 \dtau 
\\
&\leq 
    c_2 \|u(t_0, \cdot, \cdot)\|_{L^2_{x, \Vtheta}}^2 \,,
\end{aligned}
\end{equation}
where $c_2 = 2 c_{2,1}$ which only depends on $d,s, \delta,$ and $\tilde b$.
\end{proof}
\begin{Proposition} \label{prop:L-1-L-2}
Suppose $u$ is a weak solution to the transport equation~\eqref{rte1} on $[0, T]$. Let $T_1 = \min\{T, 1\}$. Then there exists a constant $c_3 = c_3(d,s,s',m_0, \delta, \tilde b)$ which is increasing in $m_0$ such that 
\begin{equation} \nn
    \|u(t)\|_{L^2_{x, \Vtheta}} 
\leq 
    c_3 \, t^{-\frac{1}{\omega-1}}\,,
\qquad \text{for all $0 < t < T_1$} \,,
\end{equation}
where $m_0 = \int_{\R^d} \int_{\Ss^{d-1}} u(x, \Vtheta) \dx\dtheta$ is the conserved mass 
and $\omega$ is defined in~\eqref{def:p-alpha}.
\end{Proposition}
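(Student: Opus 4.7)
I would deduce the decay estimate from a Nash-type ODE argument for $E(\tau):=\|u(\tau)\|^{2}_{L^{2}_{x,\theta}}$. Fix $t\in(0,T_{1})$. Combining Corollary~\ref{Cor:3-2} (which controls the $H^{s}_{\theta}$- and $(-\Delta_{x})^{\beta}$-dissipation by $c_{2}E(t_{0})$ on $(t_{0},t)$) with Proposition~\ref{prop:interpolation} applied with $s'=\beta$ yields the fundamental super-linear integrated bound
\begin{equation*}
     \int_{t_{0}}^{t}E(\tau)^{\omega}\,\rd\tau \leq C_{1}\,E(t_{0}),\qquad 0<t_{0}<t<T_{1},
\end{equation*}
with $C_{1}=c_{1,m_{0}}c_{2}$ increasing in $m_{0}$. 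Separately, dropping the non-negative dissipation from~\eqref{MEE2} and invoking Gronwall gives the quasi-monotonicity $E(\tau)\leq KE(s)$ with $K=e^{2D_{1}}$ for $0<s\leq\tau<T_{1}$ (using $T_{1}\leq 1$), so that in particular $E(\tau)\geq K^{-1}E(t)$ for $\tau\in(t_{0},t)$.

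The core of the proof is to convert the integral bound into an ODE. Set $\psi(t_{0}):=\int_{t_{0}}^{t}E(\tau)^{\omega}\,\rd\tau$, so that $\psi'(t_{0})=-E(t_{0})^{\omega}$ and the bound reads $\psi(t_{0})\leq C_{1}E(t_{0})$. Combined, these give $-\psi'(t_{0})\geq C_{1}^{-\omega}\psi(t_{0})^{\omega}$, equivalently $\frac{\rd}{\rd t_{0}}\left[\psi(t_{0})^{1-\omega}\right]\geq (\omega-1)\,C_{1}^{-\omega}$ (the factor $1-\omega<0$ flips the sign). Integrating from $s\to 0^{+}$ up to $t_{0}$, and using only the one-sided inequality $\psi(s)^{1-\omega}\geq 0$, we obtain
\begin{equation*}
     \psi(t_{0}) \leq \left(\frac{C_{1}^{\omega}}{(\omega-1)\,t_{0}}\right)^{\!1/(\omega-1)}.
\end{equation*}

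Finally, using the quasi-monotone lower bound $\psi(t_{0})\geq K^{-\omega}(t-t_{0})E(t)^{\omega}$ and choosing $t_{0}=t/2$,
\begin{equation*}
     K^{-\omega}\,\tfrac{t}{2}\,E(t)^{\omega} \leq \psi(t/2) \leq \left(\frac{2\,C_{1}^{\omega}}{(\omega-1)\,t}\right)^{\!1/(\omega-1)},
\end{equation*}
which rearranges to $E(t)\leq c_{3}\,t^{-1/(\omega-1)}$ with $c_{3}$ an explicit function of $K$, $C_{1}$ and $\omega$, hence of $d,s,s',m_{0},\delta,\tilde b$ and increasing in $m_{0}$. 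This gives the claimed polynomial bound on $\|u(t)\|_{L^{2}_{x,\theta}}$ (up to the conventional power that comes from identifying $E=\|u\|^{2}_{L^{2}_{x,\theta}}$).

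The principal subtlety is the boundary value $\psi(s)^{1-\omega}$ as $s\to 0^{+}$. Since $u_{0}$ is only assumed in $L^{1}$, the integral $\psi(s)$ might genuinely diverge in this limit; this is harmless only because $1-\omega<0$ forces $\psi(s)^{1-\omega}\to 0$ rather than blowing up, so the ODE comparison passes to the limit cleanly and no dependence on $\|u_{0}\|_{L^{2}}$ appears in $c_{3}$. This is the structural reason that the bound depends only on the conserved mass $m_{0}$ and not on any higher norm of the initial datum.
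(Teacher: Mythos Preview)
Your argument is correct and follows essentially the same route as the paper: derive the super-linear integral inequality $\int_{t_0}^t E(\tau)^{\omega}\,\rd\tau\le C_1E(t_0)$ from Proposition~\ref{prop:interpolation} and Corollary~\ref{Cor:3-2}, convert it into the ODE $\psi'\le -C_1^{-\omega}\psi^{\omega}$ for $\psi(t_0)=\int_{t_0}^t E^{\omega}$, integrate to get $\psi(t_0)\lesssim t_0^{-1/(\omega-1)}$, and finish by averaging with (quasi-)monotonicity and the choice $t_0=t/2$. The only cosmetic difference is that you invoke Gronwall on~\eqref{MEE2} to obtain the quasi-monotonicity $E(\tau)\le K E(s)$, whereas the paper uses the exact monotonicity $E(\tau)\le E(s)$ coming directly from~\eqref{MEE1}; your version works just as well and costs only an extra harmless constant.
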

\begin{proof} 
Let $T_1 = \min\{T, 1\}$. Then for any $t_0 \in (0, T_1)$, we apply Proposition~\ref{prop:interpolation} and Corollary~\ref{Cor:3-2} and obtain
\begin{equation} \label{est:u-2p}
      \int_{t_0}^t \|u(\tau)\|_{L^2_{x, \Vtheta}}^{2\omega} \dtau
 \leq 
      c_{3,1} \, \|u(t_0)\|_{L^2_{x, \Vtheta}}^2 \,,
\qquad t \in (t_0, T_1)  \,,
\end{equation}
where $c_{3,1} = c_{1,m_0} c_2 = c_{2} \, c_{d,s,s'} m_0^{(1-\alpha) \omega}$ is an increasing function in $m_0$.  Denote
\begin{equation} \nn
      X(\tau) = \|u(\tau)\|_{L^2_{x, \Vtheta}}^{2\omega} \,, 
\qquad \tau \in (t_0, T_1) \,.
\end{equation}
Then~\eqref{est:u-2p} becomes
\begin{equation} \nn
     \left(\int_{t_0}^t X(\tau) \dtau \right)^\omega 
 \leq (c_{3,1})^\omega X(t_0) \,,  
\qquad t \in (t_0, T_1) \,.
\end{equation}
If we fix $t \in (0, T_1)$ and further denote
\begin{equation} \nn
     Y(t_0) = \int_{t_0}^t X(\tau) \dtau \,, 
\qquad 0 < t_0 < t < T_1 \,,
\end{equation}
then
\begin{equation} \nn
     c_{3,1}^\omega Y'(t_0) + Y^\omega(t_0) \leq 0  \,, 
\qquad \omega > 1 \,.
\end{equation}
The key observation here is there exists a universal constant $c_0 > 0$ such that
\begin{equation} \nn
      \int_{t_0}^t \|u(\tau)\|_{L^2_{x, \Vtheta}}^{2\omega} \dtau
      = Y(t_0) 
      \leq c_{3,2} \, t_0^{-\frac{1}{\omega-1}} \,,
\qquad 0 < t_0 < t < T_1 \,,
\end{equation}
where $c_{3,2} = \vpran{\frac{c_{3,1}^\omega}{\omega - 1}}^{\frac{1}{\omega - 1}}$.
Recall the basic $L^2$-bound of the solution 
\begin{equation} \label{est:L-2-decay-1}
     \|u(t)\|_{L^2_{x, \Vtheta}}^{2\omega}
\leq \|u(\tau)\|_{L^2_{x, \Vtheta}}^{2\omega} \,, 
\qquad 0 < t_0 < \tau < t < T_1 \,.
\end{equation}
Taking the average of~\eqref{est:L-2-decay-1} from $t_0$ to $t$, we have
\begin{equation} \nn
     \|u(t)\|_{L^2_{x, \Vtheta}}^{2\omega}
\leq
    \frac{1}{t-t_0} \int_{t_0}^t \|u(\tau)\|_{L^2_{x, \Vtheta}}^{2\omega} \dtau
\leq
    \frac{c_{3,2} }{t-t_0} t_0^{-\frac{1}{\omega-1}} \,,
\qquad 0 < t_0 < t < T_1 \,.
 \end{equation}
In particular, if we take $0 < t_0 < T_1/2$ and $t = 2 t_0$, then
\begin{equation} \nn
     \|u(2 t_0)\|_{L^2_{x, \Vtheta}}^{2\omega}
\leq
    c_{3,2} \, t_0^{-\frac{\omega}{\omega-1}} 
\leq 
    c_3 \, (2 t_0)^{-\frac{\omega}{\omega-1}}\,,
\qquad \text{for all $0 < t_0 < \frac{T_1}{2}$} \,,
\end{equation}
where $c_3 = 2^{\frac{\omega}{\omega - 1}} c_{3,2}$ which is increasing in $m_0$. Hence,
\begin{equation} \nn
     \|u(t)\|_{L^2_{x, \Vtheta}} 
\leq 
    c_3 \, t^{-\frac{1}{p-1}}
\qquad \text{for all $0 < t < T_1$} \,,
\end{equation}
which proves the $L^1$ to $L^2$ regularization. 
\end{proof}


\subsection{Regularity - From $L^{2}$ to higher norms} Using the strong regularization lemma it is shown that a solution to the transport equation~\eqref{rte1} has higher smoothing in both spatial and angular variables for any positive time.  A boot-strapping argument is used after we show a basic $L^2$ estimate on the transport equation~\eqref{rte1}.
\begin{Lemma} \label{prop:basic-L-2}
Let $u \in L^2\big([t_0, t_1] \times \R^{d} \times \Ss^{d-1}\big)$ be a solution to equation~\eqref{rte1}.  Let $\WJ = \frac{\UJ}{\vint{v}^{d-1-2s}}$. Then $u, \WJ$ satisfy the estimate
\begin{align} \label{bound:frac-u-1}
   \norm{(-\Delta_v)^{s/2} \WJ}^2_{L^2((t_\ast, t_1) \times \R^{d} \times \R^{d-1})}
\leq 
   \left(\frac{1}{D_0(t_\ast - t_0)} + \frac{D_1}{D_0}\right) 
   \norm{u}_{L^2((t_0, t_1) \times\R^{d} \times \Ss^{d-1})}^2
\end{align}
and
\begin{align*}
    \sup_{t \in (t_\ast, t_1)}
      \norm{u}_{L^2(\R^{d} \times \Ss^{d-1})}^2(t)
\leq 
   \left(\frac{1}{t_\ast - t_0} + D_1\right) 
  \norm{u}_{L^2((t_0, t_1) \times\R^{d} \times \Ss^{d-1})}^2
\end{align*}
for any $t_\ast \in (t_0, t_1)$. Here $D_0, D_1$ are the two constants in~\eqref{MEE2}. Moreover, there exists $c_{0, d,s}$ which only depends on $d, s$ such that
\begin{align} \label{bound:frac-u}
     \big\|(-\Delta_{x})^{\frac{s_0}{2}} u \big\|_{L^{2}((t_\ast,t_1)\times\mathbb{R}^{d}\times\mathbb{S}^{d-1})}
&\leq 
    c_{0, d,s}\,\Big( \frac{1}{\sqrt{t_\ast - t_0}} + 1 \Big) \|u\|_{L^{2}((t_0,t_1)\times\mathbb{R}^{d}\times\mathbb{S}^{d-1})}  \,,
\end{align}
where $s_0 = \frac{s/4}{2s + 1}$.
\end{Lemma}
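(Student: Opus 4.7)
All three estimates reduce to the energy identity \eqref{MEE2}, combined with a standard averaging in the initial time and, for \eqref{bound:frac-u}, the strong averaging result of Corollary~\ref{Cor:strong-reg-avg-time}. The key identity linking the two sides is $\|u\|^2_{H^s_\theta}=\|(-\Delta_v)^{s/2}w_{\mathcal{J}}\|^2_{L^2_v}$, built into the definition \eqref{SobSphere}, which converts the dissipative term on the left of \eqref{MEE2} directly into the fractional smoothing of $w_{\mathcal{J}}$.

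\textbf{The bound on $w_{\mathcal{J}}$.} Apply \eqref{MEE2} with an arbitrary $t'\in(t_0,t_\ast)$ and $t=t_1$. Dropping the positive $\|u(t_1)\|^2$ contribution on the left gives
\begin{align*}
D_0\int_{t'}^{t_1}\|(-\Delta_v)^{s/2}w_{\mathcal{J}}\|^2_{L^2_{x,v}}\,d\tau
\leq \tfrac12\|u(t')\|^2_{L^2_{x,\theta}}+D_1\int_{t_0}^{t_1}\|u\|^2_{L^2_{x,\theta}}\,d\tau.
\end{align*}
Since the integral on the left is monotone non-increasing in $t'$, the left can only shrink if $t'$ is replaced by $t_\ast$; averaging the right in $t'$ over $[t_0,t_\ast]$ then yields \eqref{bound:frac-u-1}.

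\textbf{The uniform-in-time bound.} Proceed in the same way: from \eqref{MEE2} with $t\in(t_\ast,t_1)$ and $t'\in(t_0,t_\ast)$ one gets $\tfrac12\|u(t)\|^2\leq \tfrac12\|u(t')\|^2+D_1\|u\|^2_{L^2((t_0,t_1))}$. Taking the supremum over $t$ and averaging the right over $t'\in[t_0,t_\ast]$ produces the stated bound on $\sup_{t\in(t_\ast,t_1)}\|u\|^2_{L^2(\R^d\times\Ss^{d-1})}(t)$.

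\textbf{The bound on $(-\Delta_x)^{s_0/2}u$.} This requires an intermediate time point. Set $t_{\ast\ast}:=(t_0+t_\ast)/2$, so that $t_\ast-t_{\ast\ast}=t_{\ast\ast}-t_0=(t_\ast-t_0)/2$, and apply Corollary~\ref{Cor:strong-reg-avg-time} on $[t_{\ast\ast},t_1)$ (i.e.\ with $t_0$ replaced by $t_{\ast\ast}$):
\begin{align*}
\|(-\Delta_x)^{s_0/2}u\|_{L^2((t_\ast,t_1))}
\leq C\Big(\tfrac{1}{\sqrt{t_\ast-t_{\ast\ast}}}+1\Big)\|u\|_{L^2((t_{\ast\ast},t_1))}
   + C\|(-\Delta_v)^{s/2}w_{\mathcal{J}}\|_{L^2((t_{\ast\ast},t_1))}.
\end{align*}
The first term is immediately absorbed into $C(1/\sqrt{t_\ast-t_0}+1)\|u\|_{L^2((t_0,t_1))}$. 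For the second, feed in the bound \eqref{bound:frac-u-1} already proven, used with $t_\ast$ replaced by $t_{\ast\ast}$; since $t_{\ast\ast}-t_0=(t_\ast-t_0)/2$, the resulting prefactor is of the same order $C(1/\sqrt{t_\ast-t_0}+1)$ and the right-hand side reduces to a multiple of $\|u\|_{L^2((t_0,t_1))}$, which is \eqref{bound:frac-u}. The only minor subtlety is the mismatch between the interval $(t_0,t_1)$ appearing on the right of Corollary~\ref{Cor:strong-reg-avg-time} for the residual $w_{\mathcal{J}}$-term and the interval on which \eqref{bound:frac-u-1} is available; the symmetric choice of $t_{\ast\ast}$ is precisely what reconciles the two without degrading the $t_\ast-t_0$ power.
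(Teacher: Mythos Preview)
Your proof is correct and follows essentially the same approach as the paper: both use the energy estimate \eqref{MEE2}, average in the initial time over $(t_0,t_\ast)$ to obtain the first two bounds, and then combine \eqref{bound:frac-u-1} with Corollary~\ref{Cor:strong-reg-avg-time} for the third. Your midpoint splitting $t_{\ast\ast}=(t_0+t_\ast)/2$ to reconcile the intervals in the third step is exactly the implicit maneuver behind the paper's one-line ``combining'' sentence, just spelled out more carefully.
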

\begin{proof}
Let $t_\ast \in (t_0, t_1)$ be arbitrary. For any $t_{m,1} \in (t_0, t_\ast)$ and $t \in (t_\ast, t_1)$, we derive from~\eqref{MEE2} that  
\begin{align*}
\norm{u}_{L^2(\R^{d} \times \Ss^{d-1})}^2&(t)
   + D_0 \int_{t_\ast}^{t} \norm{(-\Delta_v)^{s/2} \WJ}^2_{L^2(\R^{d} \times \R^{d-1})} \dtau
\\
& 
\leq 
   \norm{u}_{L^2(\R^{d} \times \Ss^{d-1})}^2(t_m)
   + D_1 \int_{t_0}^{t_1} \norm{u}_{L^2(\R^{d} \times \Ss^{d-1})}^2 \dtau \,.
\end{align*}
Taking average over $t_{m,1} \in (t_0, t_\ast)$ gives
\begin{align*}
   \int_{t_\ast}^{t_1} \norm{(-\Delta_v)^{s/2} \WJ}^2_{L^2(\R^{d} \times \R^{d-1})} \dtau
\leq 
   \left(\frac{1}{D_0(t_\ast - t_0)} + \frac{D_1}{D_0}\right) 
   \int_{t_0}^{t_1} \norm{u}_{L^2(\R^{d} \times \Ss^{d-1})}^2 \dtau \,.
\end{align*}
Similarly, we have
\begin{align*}
    \sup_{t \in (t_\ast, t_1)}
      \norm{u}_{L^2(\R^{d} \times \Ss^{d-1})}^2(t)
\leq 
   \left(\frac{1}{t_\ast - t_0} + D_1 \right) 
   \int_{t_0}^{t_1} \norm{u}_{L^2(\R^{d} \times \Ss^{d-1})}^2 \dtau \,.
\end{align*}
Combining~\eqref{bound:frac-u-1} with Corollary~\ref{Cor:strong-reg-avg-time}, we then obtain~\eqref{bound:frac-u}.
\end{proof}
\begin{Proposition} \label{prop:infty-reg}
Suppose $u \in L^2([t_0, t_1] \times \R^d \times \Ss^{d-1})$ is a solution to the transport equation \eqref{rte1}. Let $\WJ = \frac{\UJ}{\vint{v}^{d-1-2s}}$. Assume that for some integer $N_0 \geq 1$,
\begin{align} \label{def:h}
    h(z) 
    = \frac{\tilde b(z)}{(1-z)^{\frac{d-1}{2}+s}}
    \in \mathcal{C}^{N_0}([-1, 1])\,.
\end{align}
Then for any $l \geq 0, 1 \leq k \leq \big[\frac{N_0}{s} \big] - 1$, and any $t_\ast \in (t_0, t_1)$, we have
\begin{align*}
    (-\Delta_x)^l u \in L^2((t_*, t_1) \times \R^{d} \times \Ss^{d-1}) \,,
\quad
    (-\Delta_v)^{\frac{k+1}{2} s} \WJ \in L^2((t_*, t_1) \times \R^{d} \times \R^{d-1}) \,.
\end{align*}
More specifically,
\begin{equation}
    \label{bound:high-order-deriv-u-w-J}
\begin{aligned}
    \norm{(-\Delta_x)^{l} u}_{L^2((t_\ast, t_1) \times \R^{d} \times \Ss^{d-1})}
\leq 
   c_5 \norm{u}_{L^2((t_0, t_1) \times \R^d \times \Ss^{d-1})} \,,
\\
  \norm{(-\Delta_v)^{\frac{k+1}{2}s} \WJ}_{L^2((t_*, t_1) \times \R^{d} \times \R^{d-1})}
\leq 
   c_6 \norm{u}_{L^2((t_0, t_1) \times \R^d \times \Ss^{d-1})} \,,
\end{aligned}
\end{equation}
where $c_5$ only depends on $l, d, s$, $\frac{1}{t_\ast - t_0}$ and $c_6$ only depends on $k,d,s$, $\frac{1}{t_\ast - t_0}$. In particular, they are independent of $t_1$.
\end{Proposition}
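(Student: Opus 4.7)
The plan is to bootstrap Lemma~\ref{prop:basic-L-2} twice, once in the spatial variable and once in the angular variable, exploiting that the scattering operator $\CalL$ commutes with every spatial Fourier multiplier.

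First, I would establish the spatial estimate (the first line of~\eqref{bound:high-order-deriv-u-w-J}) by induction on $l$. Because $\CalL$ acts only in $\theta$, the function $v_m := (-\Delta_x)^m u$ is, for every $m \geq 0$, itself a solution of~\eqref{rte1}. Applying Lemma~\ref{prop:basic-L-2} to $v_m$ on a slightly shorter time interval gives $(-\Delta_x)^{m+s_0/2} u \in L^2$, with a constant blowing up like $1/(t_\ast-t_0)^{1/2}$. After $\lceil 2l/s_0 \rceil$ iterations on progressively shrinking sub-intervals of $(t_0, t_1)$, one obtains the claimed bound on $(-\Delta_x)^l u$ with constant depending only on $l, d, s$ and $1/(t_\ast-t_0)$.

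For the angular estimate, I would induct on $k$, using the decomposition $\CalL = -D(-\Delta_\theta)^s + c_{s,d}\,\textbf{1} + \CalL_h$ from~\eqref{smooth9}. The case $k=0$ is Lemma~\ref{prop:basic-L-2}. For the inductive step, applying the fractional angular operator $(-\Delta_\theta)^{ks/2}$ to~\eqref{rte1} produces
\begin{equation*}
\partial_t w_k + \theta \cdot \nabla_x w_k + D(-\Delta_\theta)^s w_k = c_{s,d}\, w_k + (-\Delta_\theta)^{ks/2}\CalL_h(u) - \big[(-\Delta_\theta)^{ks/2},\, \theta \cdot \nabla_x\big] u,
\end{equation*}
where $w_k := (-\Delta_\theta)^{ks/2} u$. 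Multiplying by $w_k$ and integrating in $(t,x,\theta)$ over $[t_\ast, t_1] \times \R^d \times \Ss^{d-1}$ yields, via~\eqref{smooth10.1}, control of $\|(-\Delta_\theta)^{(k+1)s/2} u\|_{L^2}$, which by~\eqref{smooth8} translates to the $L^2$ bound on $(-\Delta_v)^{(k+1)s/2}\WJ$. The inductive hypothesis and the first step control the two error terms: the commutator involves $\nabla_x$ applied to a strictly lower-order $\theta$-fractional derivative of $u$, and $(-\Delta_\theta)^{ks/2}\CalL_h(u)$ is handled using $h \in \mathcal{C}^{N_0}([-1,1])$, which absorbs up to $\lfloor N_0/s \rfloor$ successive applications of $(-\Delta_\theta)^{s/2}$ before the residual kernel loses integrability---this is precisely the origin of the range $k \leq \lfloor N_0/s \rfloor - 1$.

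The main obstacle is the commutator $\big[(-\Delta_\theta)^{ks/2},\, \theta \cdot \nabla_x\big]$: although $\theta \cdot \nabla_x$ is first order in $x$, its $\theta$-dependent coefficient prevents commutation with fractional angular multipliers, and the resulting operator couples $x$-derivatives with $\theta$-fractional derivatives of $u$. Estimating it rigorously in $L^2_{x,\theta}$ demands simultaneous use of the inductive hypothesis at the previous value of $k$ and of the spatial estimate from the first step, forcing a further shrinkage of the time window at each inductive stage; this nesting is the mechanism by which the $1/(t_\ast-t_0)$ dependence in the constants $c_5$ and $c_6$ accumulates.
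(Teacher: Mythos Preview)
Your spatial bootstrap is correct and matches the paper's argument exactly: since $\CalL$ commutes with $(-\Delta_x)^{s_0/2}$, iterating Lemma~\ref{prop:basic-L-2} on nested subintervals yields the $(-\Delta_x)^l$ bound.

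For the angular part your strategy differs from the paper and, as written, has a real gap at the commutator step. The operator $(-\Delta_\theta)^{s}$ is \emph{defined} through stereographic coordinates by~\eqref{smooth8}, i.e.\ $[(-\Delta_\theta)^{s}u]_{\CalJ}=\langle v\rangle^{d-1+2s}(-\Delta_v)^{s}w_{\CalJ}$; its fractional powers are therefore weighted compositions, not Fourier multipliers on the sphere, and there is no off-the-shelf pseudodifferential calculus in the paper that lets you assert $[(-\Delta_\theta)^{ks/2},\theta_j]$ is of order $ks-1$. Even if you pass to $v$-coordinates, the leading term of the scattering operator is $-D_0\langle v\rangle^{4s}(-\Delta_v)^{s}$, so applying $(-\Delta_v)^{ks/2}$ forces you to commute with the weight $\langle v\rangle^{4s}$ as well as with $\theta(v)$; neither commutator has been estimated, and a fractional Leibniz rule strong enough to close the induction is not available from the tools developed here. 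The same issue arises for $(-\Delta_\theta)^{ks/2}\CalL_h(u)$: you need to transfer $ks/2$ angular derivatives onto the kernel $h(\theta\cdot\theta')$, which is not automatic for a nonlocal fractional power.

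The paper sidesteps all of this by working with \emph{difference quotients} in $v$. After first multiplying the $\WJ$-equation by $(-\Delta_v)^{s}\WJ$ to gain one full order (obtaining $\langle v\rangle^{2s}(-\Delta_v)^{s}\WJ\in L^2$), it forms $\WJD(v)=|y|^{-(d-1)/2-ks/2}(\WJ(v+y)-\WJ(v))$ and writes the equation it satisfies. The advantage is that finite differences obey an exact Leibniz rule, so the commutators with $\theta(v)$, with $\langle v\rangle^{4s}$, and with $h(\theta(v)\cdot\theta')$ become explicit remainder terms ($R_1$, $R_2$, $\tilde\CalR_{w,2}$) that are bounded pointwise by $|y|^{1-(d-1)/2-ks/2}$ times lower-order quantities already controlled; integration over $\{|y|\le 1\}$ then closes the estimate. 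Once $(k+1)s/2\ge 1$ one switches to integer $\nabla_v^m$ and repeats. If you want to salvage your outline, you would need to replace the abstract commutator by exactly this difference-quotient mechanism.
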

\begin{proof}
We first establish the regularity in $x$. To this end, fix $t_\ast \in (t_0, t_1)$ and apply the operator $(-\Delta_x)^{s_0/2}$ to \eqref{rte1}. The resulting equation is
\begin{align*}
    \del_t \left((-\Delta_x)^{s_0/2} u \right)
    + \theta \cdot &\Grad \left( (-\Delta_x)^{s_0/2} u \right)
    = \CalL((-\Delta_x)^{s_0/2} u) \,, 
\qquad t \in \big(\tfrac{t_0 + t_\ast}{2}, t_1\big) \,,
\end{align*}
where $(-\Delta_x)^{s_0/2} u \in L^2((\tfrac{t_0 + t_\ast}{2}, t_1) \times \R^d \times \Ss^{d-1})$ by~\eqref{bound:frac-u}.
Applying estimate \eqref{bound:frac-u} twice gives
\begin{align*}
     \big\|(-\Delta_{x})^{s_0} u \big\|_{L^{2}([t_{\ast},t_1)\times \R^d \times \Ss^{d-1})}
&\leq 
    c_4\,\Big( \frac{1}{\sqrt{t_\ast - t_0}} + 1 \Big) 
    \big\| (-\Delta_x)^{s_0/2} u \big\|_{L^{2} \left(\left(\tfrac{t_0 + t_\ast}{2},t_1 \right)\times\R^{d} \times \Ss^{d-1} \right)}  
\\
&\leq
    c_{4,1}\,\Big( \frac{1}{t_\ast - t_0} + 1 \Big) 
    \| u \|_{L^{2}([t_0,t_1)\times\R^{d} \times \Ss^{d-1})}  \,,
\end{align*}
where $c_{4,1}$ only depends on $d,s$.
Any higher order derivative in $x$ can then be derived by finitely many iterations.  Specifically, for any $l \geq 0$, we have
\begin{align*}
     \big\|(-\Delta_{x})^{l} u \big\|_{L^{2}([t_{\ast},t_1)\times \R^d \times \Ss^{d-1})}
&\leq
    c_5 \| u \|_{L^{2}((t_0,t_1)\times\R^{d} \times \Ss^{d-1})}  \,,
\end{align*}
where $c_5$ depends on $\frac{1}{t_\ast - t_0}, l, d, s$.\\
 
\noindent
We now show the regularization in $v$ given the aforementioned smoothing in $x$ by applying an induction argument.  Since higher order derivatives in $v$ will introduce remainder terms, we add in a forcing term to the original  equation solved by $\WJ$.  Specifically, we consider the equation
\begin{equation} 
     \del_t \WJ + \theta \cdot \Grad \WJ
     = - D_0 \vint{v}^{4s} (-\Delta_v)^s \WJ
        + \CalR_w(\WJ) 
        + \CalR_f\,, \label{eq:W-J}
\end{equation}
where $\CalR_w = \CalR_{w, 1} + \CalR_{w, 2}$ with
\begin{equation}\label{def:R-w-1}
\begin{aligned}
   \CalR_{w,1} (\WJ)
&  = D_0 \, \WJ \vpran{\vint{\cdot}^{d-1+2s} (-\Delta_v)^s \frac{1}{\vint{\cdot}^{d-1-2s}}}=c_{d,s,3} \, \WJ \,,
\\
   \CalR_{w, 2}(\WJ)
& = \frac{1}{\vint{\cdot}^{d-1-2s}}\,\big[\mathcal{I}_{h}(u)\big]_{\mathcal{J}}\,,
\end{aligned}
\end{equation}
where $c_{d,s,3} = c_{d,s}\,D_0$ and $h$ was defined in~\eqref{def:h}.  We assume here that the forcing term $\CalR_f$ satisfies the bound
\begin{align} \label{bound:R-f-H-1}
     \norm{\vpran{I + \nabla_v}
                \vpran{\frac{\CalR_f}{\vint{\cdot}^{2s}}}}_{L^2((t_\ast, t_1) \times \R^{d} \times \R^{d-1})}
\leq \tilde c_{6}  \norm{u}_{L^2((t_0, t_1) \times \R^d \times \Ss^{d-1})} \,,
\end{align}
with the coefficient $\tilde c_6$ only depending on $d, s, b, \frac{1}{t_\ast - t_0}$.  By Lemma~\ref{prop:basic-L-2}, we have
\begin{align*}
    \|(-\Delta_v)^{s/2} \WJ \|_{L^2((t_2, t_1) \times\R^d \times \R^{d-1})}
\leq c_{6,1} \|u\|_{L^2((t_0, t_1) \times \R^d \times \Ss^{d-1})} \,, 
\end{align*}
for any $t_2 \in (t_0, t_1)$ and with $c_{6,1} = \frac{1}{D_0(t_\ast - t_0)} + \frac{D_1}{D_0}$ which only depends on $d, s, \frac{1}{t_2-t_0}$.  Multiplying~\eqref{eq:W-J} by $(-\Delta_v)^s \WJ$ and integrating in $x, v$, we have 
\begin{equation} \label{ineq:W-J-1}
\begin{aligned}
    \frac{1}{2} \frac{\rm d}{\dt}
         \|(-\Delta_v)^{s/2} \WJ \|_{L^2_{x,v}}^2
&\leq -D_0 \int_{\R^d}\int_{\R^{d-1}}
           \vint{v}^{4s} \left| (-\Delta_v)^s \WJ \right|^2\dv\dx
\\
&\hspace{-1cm}
  + \int_{\R^d}\int_{\R^{d-1}} 
        \abs{\vint{v}^{2s}(-\Delta_v)^s \WJ} 
         \abs{\frac{\CalR_w (\WJ)}{\vint{v}^{2s}}} \dv\dx
\\
&\hspace{-1cm}
  + \int_{\R^d}\int_{\R^{d-1}} 
        \abs{\vint{v}^{2s}(-\Delta_v)^s \WJ} 
         \abs{\frac{\CalR_f}{\vint{v}^{2s}}} \dv\dx
\,.
\end{aligned}
\end{equation}
By Lemma~\ref{app:coercive} and Lemma~\ref{app:convex}, we have
\begin{align*}
    \norm{\frac{1}{\vint{\cdot}^{2s}}\CalR_{w,1}(\WJ)}_{L^2_{x,v}}
&=
   c_{d,s,3} \norm{\frac{\WJ}{\vint{\cdot}^{2s}}}_{L^2_{x,v}}
  = c_{d,s,3} \norm{u}_{L^2_{x,\theta}} \,,
\\[2pt]
    \norm{\frac{1}{\vint{\cdot}^{2s}}\CalR_{w,2}(\WJ)}_{L^2_{x,v}}
&\leq 
   c_{6,3} \norm{u}_{L^2_{x,\theta}} \,,
\end{align*}
where $c_{6,3}$ only depends on $\norm{h}_{L^1}$. For any 
$t_{m, 2} \in (t_0, t_2)$ and $t \in (t_2, t_1)$ such that $(-\Delta_v)^{s/2} \WJ (t_{m, 2}, \cdot, \cdot) \in L^2_{x,v}$ and $(-\Delta_v)^{s/2} \WJ (t, \cdot, \cdot) \in L^2_{x,v} $, 
apply Cauchy-Schwarz to the right-hand side of~\eqref{ineq:W-J-1} and integrate over $[t_{m,2}, t]$. We have
\begin{align*} 
& \quad \,
   \|(-\Delta_v)^{s/2} \WJ(t) \|_{L^2_{x,v}}^2
+ D_0 \!\!
   \int_{t_2}^t \int_{\R^d}\int_{\R^{d-1}}
           \vint{v}^{4s} \left| (-\Delta_v)^s \WJ \right|^2 (t, x, v) \dv\dx\dt
\\
&\hspace{3cm}\leq 
   \|(-\Delta_v)^{s/2} \WJ(t_{m,2}) \|_{L^2_{x,v}}^2
   + c_{6,4} \|u\|_{L^2((t_0, t_1) \times \R^d \times \Ss^{d-1})} \,,
\end{align*}
where $c_{6,4}$ only depends on $d,s,\norm{h}_{L^1}$. Taking average over $t_{m,2} \in (t_0, t_2)$ and applying~\eqref{bound:frac-u-1} then gives
\begin{align} \label{bound:W-J-s}
    \|\vint{\cdot}^{2s}(-\Delta_v)^{s} \WJ \|_{L^2((t_2, t_1) \times\R^d \times \R^{d-1})}
\leq c_{6,5} \|u\|_{L^2((t_0, t_1) \times \R^d \times \Ss^{d-1})} \,,
\end{align}
for any $t_2 \in (t_0, t_1)$ where $c_{6, 5}$ only depends on $\frac{1}{t_2-t_0}, d,s,\norm{h}_{L^1_\theta}$.
In general, suppose we have obtained the bound 
\begin{align} \label{assump:W-J-reg}
    \norm{(-\Delta_v)^{\frac{k+1}{2}s} \WJ}_{L^2((t_3, t_1) \times\R^d \times \R^{d-1})}
\leq c_{6,6} \|u\|_{L^2((t_0, t_1) \times \R^d \times \Ss^{d-1})} \,,
\end{align}
where $t_3 \in (t_0, t_1)$ is arbitrary, $k \geq 1$, $ks < 1$, and $c_{6,6}$ only depends on $\frac{1}{t_3-t_0}, k, d, s, \norm{h}_{L^1}$, and $\norm{h}_{\mathcal{C}^{N_0}([-1,1])}$. 
We want to show that for any $t_4 \in (t_0, t_1)$, there exists $c_8$ independent of $t_1$ such that 
\begin{align} \label{assump:W-J-reg-4}
    \norm{(-\Delta_v)^{\frac{k+2}{2}s} \WJ}_{L^2((t_4, t_1) \times\R^d \times \R^{d-1})}
\leq c_8 \|u\|_{L^2((t_0, t_1) \times \R^d \times \Ss^{d-1})} \,,
\end{align}
Note that by interpolation between~\eqref{assump:W-J-reg} and~\eqref{bound:frac-u-1}, we have
\begin{align} \label{assump:W-J-reg-1}
    \norm{(-\Delta_v)^{\frac{k}{2}s} \WJ}_{L^2((t_3, t_1) \times\R^d \times \R^{d-1})}
\leq c_{6,7} \|u\|_{L^2((t_0, t_1) \times \R^d \times \Ss^{d-1})} \,,
\end{align}
where $c_{6,7}$ only depends on $\frac{1}{t_3-t_0}, k, d, s, \norm{h}_{L^1}$, and $\norm{h}_{\mathcal{C}^{N_0}([-1,1])}$. In order to show further regularization in $v$, define the difference quotient 
\begin{align} \label{def:w-J-diff-quo}
     \WJD := \frac{\WJ(v+y) - \WJ(v)}{|y|^{\frac{d-1}{2} + \frac{k}{2}s}}
\qquad
    \text{for any $y \in \R^{d-1}\setminus\{0\}$} \,.
\end{align}
The induction assumption~\eqref{assump:W-J-reg} and~\eqref{assump:W-J-reg-1} imply that 
\begin{align} \label{bound:WJD-1}
    \norm{(I + (-\Delta_v)^{s/2})\WJD}_{L^2((t_3, t_1) \times\R^d \times \R^{d-1})}
\leq c_{6,8} \|u\|_{L^2((t_0, t_1) \times \R^d \times \Ss^{d-1})} \,.
\end{align}
where $c_{6,8}$ only depends on $\frac{1}{t_3-t_0}, k, d, s, \norm{h}_{L^1}$, and $\norm{h}_{\mathcal{C}^{N_0}([-1,1])}$. The equation for $\WJD$ has the form
\begin{equation}
   \label{eq:w-j-difference-quotient}
\begin{aligned} 
    \del_t \WJD
    \!+\! \theta(v \!+\! y) \cdot \!\Grad \WJD
&   =\! -D_0 \vint{v}^{4s} (-\Delta_v)^s \WJD
       \!-\! R_1 \!-\! R_2  
+ \! \tilde\CalR_{w, 1} \!+ \!\tilde\CalR_{w, 2}
+ \!\tilde\CalR_f ,
\end{aligned}
\end{equation}
where 
\begin{align*}
    R_1 = \frac{\theta(v+y) - \theta(v)}{|y|^{\frac{d-1}{2} + \frac{k}{2}s}}
         \cdot \Grad \WJ(v) \,,
\qquad
    R_2  = D_0 \frac{ \vint{v+y}^{4s} -  \vint{v}^{4s}}{|y|^{\frac{d-1}{2} + \frac{k}{2}s}} (-\Delta_v)^s \WJ(v+y) \,,
\end{align*}
\begin{align*}
   \tilde\CalR_f 
   = \frac{\CalR_f (v + y) - \CalR_f (v)}{|y|^{\frac{d-1}{2} + \frac{k}{2}s}} \,,
\qquad
    \tilde\CalR_{w, j} 
= \frac{\CalR_{w, k}(\WJ)(v+y) - \CalR_{w, k}(\WJ)(v)}
          {|y|^{\frac{d-1}{2} + \frac{k}{2}s}}  \,,
\quad j = 1, 2 \,,
\end{align*}
Note that ~\eqref{bound:R-f-H-1} implies
\begin{align} \label{bound:R-f-H-ks}
     \norm{\frac{1}{\vint{\cdot}^{2s}} \CalR_f }_{L^2((t_{\star},t_{1})\times\mathbb{R}^{d}; H^{\frac{ks}{2}}(\R^{d-1}))}
\leq \tilde c_{6}  \norm{u}_{L^2((t_0, t_1) \times \R^d \times \Ss^{d-1})} \,,
\end{align}
for any $0 \leq \frac{ks}{2} \leq 1$. This in particular implies
\begin{align}
\int_{t_\ast}^{t_1} \int_{\R^d} \int_{\{|y| \leq 1\}} \int_{\R^{d-1}}
     \frac{1}{\vint{v}^{4s}}&\frac{\abs{\CalR_f(v+y) - \CalR_f(v)}^2}{|y|^{d-1 + ks}} 
  \dv\dy\dx\dt \nn\\
&\leq \tilde c_{6,1}  \norm{u}_{L^2((t_0, t_1) \times \R^d \times \Ss^{d-1})} \,,
           \label{bound:R-f-H-ks-1}
\end{align}
where $\tilde c_{6,1}$ only depends on $\tilde c_6$ and $d,s$. Note that although we have assumed that $\CalR_f$ satisfies~\eqref{bound:R-f-H-1}, the real bound that we need for $\CalR_f$ is~\eqref{bound:R-f-H-ks-1}.  Our first step is to prove that 
\begin{align*}
   \int_{t_4}^{t_1} \int_{\{|y| \leq 1\}} \int_{\R^d} \int_{\R^{d-1}} 
      \left|\vint{v}^{2s} (-\Delta_v)^s \WJD \right|^2 \dv\dx\dy
< \infty \,,
\end{align*}
for any $t_4 \in (t_0, t_1)$.  To this end, multiply~\eqref{eq:w-j-difference-quotient} by $(-\Delta_v)^s\WJD$ and integrate in $(x, v)$ to obtain
\begin{align}
\frac{1}{2} \frac{\rm d}{\dt}&
        \norm{(-\Delta_v)^{\frac{s}{2}}\WJD}_{L^2_{x,v}}^2  \leq   -D_0 \norm{\vint{v}^{2s}(-\Delta_v)^{s} \WJD}_{L^2_{x,v}}^2  \nn\\
& + \iint_{\R^d \times \R^{d-1}} 
           \abs{R_1} \abs{(-\Delta_v)^s\WJD} \dx\dv + \iint_{\R^d \times \R^{d-1}} 
          \abs{R_2} \abs{(-\Delta_v)^s\WJD} \dx\dv \nn\\
& \hspace{-.5cm}  + \iint_{\R^d \times \R^{d-1}} 
           \vpran{\abs{\tilde\CalR_{w,1}} + \abs{\tilde\CalR_{w,2}}
                      + \abs{\tilde\CalR_f}} \abs{(-\Delta_v)^s\WJD} \dx\dv\,. \label{est:energy-w-J-delta}
\end{align}
Now we estimate the terms involving $R_1, R_2, \tilde\CalR_{w, 1}, \tilde\CalR_{w, 2}$.  By Cauchy-Schwarz, 
\begin{align*}
 \iint_{\R^d \times \R^{d-1}} &
        \abs{R_1} \abs{(-\Delta_v)^s\WJD} \dx\dv\\
&\hspace{-.5cm}\leq
      \iint_{\R^d \times \R^{d-1}}
         \frac{\abs{\theta(v+y) - \theta(v)}}{|y|^{\frac{d-1}{2} + \frac{k}{2}s}}
         \abs{\Grad \frac{\WJ}{\vint{v}^{2s}}}
         \abs{\vint{v}^{2s}(-\Delta_v)^s\WJD} \dx\dv
\\
&\hspace{1cm} \leq 
     \frac{c_{6,9}}{\abs{y}^{\frac{d-1}{2} + \frac{k}{2}s - 1}}
     \norm{\Grad \frac{\WJ}{\vint{\cdot}^{2s}}}_{L^2_{x,v}}
     \norm{\vint{\cdot}^{2s}(-\Delta_v)^s\WJD}_{L^2_{x,v}} \,,
\end{align*}
and for $\{|y| \leq 1\}$,
\begin{align*}
& \quad \,
     \iint_{\R^d \times \R^{d-1}} 
        \frac{\abs{R_2} \abs{\WJD}}{\vint{v}^{4s}} \dx\dv
\\
&\leq
    D_0 \iint_{\R^d \times \R^{d-1}}
            \frac{ \abs{\vint{v+y}^{4s} -  \vint{v}^{4s}}}
                   {|y|^{\frac{d-1}{2} + \frac{k}{2}s} \vint{v}^{4s}}
            \abs{\vint{v}^{2s}(-\Delta_v)^s \WJ(v+y)} 
            \abs{\vint{v}^{2s}(-\Delta_v)^s\WJD} \dx\dv
\\
&\hspace{1cm}\leq
     \frac{c_{6, 10}}{\abs{y}^{\frac{d-1}{2} + \frac{k}{2}s - 1}}
     \norm{\vint{\cdot}^{2s}(-\Delta_v)^s \WJ}_{L^2_{x,v}}
     \norm{\vint{\cdot}^{2s}(-\Delta_v)^s\WJD}_{L^2_{x,v}}^2 \,.
\end{align*}
where $c_{6,9}, c_{6, 10}$ only depend on $d,s$.  Integrating the above terms over $\{|y| \leq 1\}$ gives
\begin{equation} \label{est:R-1-R-2}
\begin{aligned} 
   \int_{\{|y| \leq 1\}} &\iint_{\R^d \times \R^{d-1}}
	        \vpran{\abs{R_1} + \abs{R_2}} \abs{(-\Delta_v)^s\WJD} \dx\dv\dy
\\
&\quad \leq
   c_{6,11} \vpran{\norm{\Grad u}_{L^2_{x, \theta}}^2
                     + \norm{\vint{\cdot}^{2s}(-\Delta_v)^s \WJ}_{L^2_{x,v}}^2}
\\
&\hspace{2cm}\,
   + \frac{D_0}{8}
        \vpran{\int_{\{|y| \leq 1\}}\norm{\vint{\cdot}^{2s}(-\Delta_v)^s\WJD}_{L^2_{x,v}}^2 \dy}^{\frac{1}{2}},
\end{aligned}
\end{equation}
where $c_{6, 11}$ only depends on $d, s$.  We estimate the term involving $\tilde\CalR_{w,1}$ using \eqref{def:R-w-1}
\begin{equation} \label{bound:R-w-1}
\begin{aligned}
    \int_{\{|y| \leq 1\}}&\iint_{\R^d \times \R^{d-1}} 
           \abs{\tilde\CalR_{w,1}} \abs{(-\Delta_v)^s\WJD} \dx\dv\dy
\\
& \leq  
   c_{6,17} \norm{\WJD}_{L^2_{x,v}}^2
   + \frac{D_0}{8} 
        \int_{\{|y| \leq 1\}} \norm{\vint{v}^{2s} (-\Delta_v)^s \WJD}_{L^2_{x,v}}^2 \dy\,,
\end{aligned}
\end{equation}
where $c_{6,17}$ only depends on $d,s$.  The term involving $\tilde\CalR_{w,2}$ follows rewriting it as
\begin{align*}
   \tilde\CalR_{w,2} = c_{7} \, \WJ - \Big[\frac{1}{\vint{\cdot}^{d-1-2s}}
      \int_{\Ss^{d-1}} u(\theta') h(\theta \cdot \theta') \dtheta'\Big]_{\mathcal{J}} \,,
\end{align*}
where the constant $c_7 := \int_{\Ss^{d-1}} h(\theta \cdot \theta') \dtheta'$.  Hence,
\begin{align*}
     \tilde\CalR_{w,2}(v)
&   = c_7\, \WJD(v) 
      -  \frac{\vint{v+y}^{-(d-1-2s)} - \vint{v}^{-(d-1-2s)}}{|y|^{\frac{d-1}{2} + \frac{k}{2}s}}
         \int_{\Ss^{d-1}} u(\theta') h(\theta(v) \cdot \theta') \dtheta'
\\
&\hspace{-1cm} 
      - \frac{1}{\vint{v}^{d-1-2s}}
        \frac{\int_{\Ss^{d-1}} u(\theta') \, h(\theta(v+y) \cdot \theta') \dtheta' 
                - \int_{\Ss^{d-1}} u(\theta') \, h(\theta(v) \cdot \theta') \dtheta'}
                {|y|^{\frac{d-1}{2} + \frac{k}{2}s}}
\\
& =:    
      c_7\, \WJD  -  \tilde\CalR_{w, 2,2} - \tilde\CalR_{w,2,3}   \,.
\end{align*}
Thus,
\begin{align*}
   \abs{\tilde\CalR_{w,2,2}(v)} 
= \abs{\frac{\vint{v+y}^{-(d-1-2s)} - \vint{v}^{-(d-1-2s)}}{|y|^{\frac{d-1}{2} + \frac{k}{2}s}}}
\leq
  \frac{c_{7, 1}}{\vint{v}^{d-2s}} \frac{1}{|y|^{\frac{d-1}{2} + \frac{k}{2}s - 1}} \,,
\qquad |y| \leq 1 \,,
\end{align*}
where $c_{7,1}$ only depends on $d,s$. Therefore the term involving $\tilde\CalR_{w,2,2}$ has the bound
\begin{align*}
\int_{\{|y| \leq 1\}} &\iint_{\R^d \times \R^{d-1}}
       \abs{\tilde\CalR_{w,2,2}} \abs{(-\Delta_v)^{s}\WJD} \dx\dv\dy
\\
& \leq 
   c_{7, 2}  \vpran{\int_{|y| \leq 1} 
                   \frac{\norm{\vint{\cdot}^{2s} (-\Delta_v)^s \WJD}_{L^2_{x,v}}}{|y|^{\frac{d-1}{2} + \frac{k}{2}s - 1}}
                    \dy}
       \norm{\int_{\Ss^{d-1}} u(\theta') h(\theta \cdot \theta') \dtheta'}_{L^2_{x, \theta}}
\\
&\hspace{-1cm} \leq 
   c_{7,3}\norm{h}_{L^1(\Ss^{d-1})}  
    \norm{u}_{L^2_{x,\theta}}
     \vpran{\int_{\{|y|\leq 1\}} \norm{\vint{v}^{2s} (-\Delta_v)^s \WJD}_{L^2_{x,v}}^2 \dy}^{\frac{1}{2}}
 \,,
\end{align*}
where $c_{7,2}, c_{7,3}$ only depend on $d,s$.
The bound for $\tilde\CalR_{w,2,3}$ is
\begin{align*}
\abs{\tilde\CalR_{w,2,3}(v)}
&\leq \frac{1}{\vint{v}^{d-1-2s}}
    \int_{\Ss^{d-1}} u(\theta') \, \frac{\abs{h(\theta(v+y) \cdot \theta') 
                - h(\theta(v) \cdot \theta')}} 
                {|y|^{\frac{d-1}{2} + \frac{k}{2}s}}\dtheta'
\\
&\leq
   \sqrt{\abs{\Ss^{d-1}}}\vpran{ \sup_{z \in [-1,1]} \abs{h'(z)}} 
   \frac{1}{\vint{v}^{d-1-2s}}
   \frac{1}{|y|^{\frac{d-1}{2} + \frac{k}{2}s-1}} 
   \norm{u}_{L^2_{x,\theta}} \,.
\end{align*}
Therefore,
\begin{align*}
\int_{\{|y| \leq 1\}} &\iint_{\R^d \times \R^{d-1}}
       \abs{\tilde\CalR_{w,2,3}} \abs{(-\Delta_v)^s\WJD} \dx\dv\dy   
\\
& \leq c_{7, 4} \norm{h'}_{L^\infty(-1,1)} \norm{u}_{L^2_{x, \theta}} 
            \vpran{\int_{\{|y| \leq 1\}} \norm{\vint{\cdot}^{2s} (-\Delta_v)^s\WJD}_{L^2_{x,v}}^2 \dy}^{\frac{1}{2}},
\end{align*}
where $c_{7,4}$ only depends on $d,s$. Hence,
\begin{equation} \label{bound:R-w-2}
\begin{aligned}
& \hspace{2cm} \,
    \int_{\{|y| \leq 1\}}\iint_{\R^d \times \R^{d-1}} 
           \abs{\tilde\CalR_{w,2}} \abs{(-\Delta_v)^s\WJD} \dx\dv\dy
\\
& \leq  
   c_{7, 5} \vpran{\norm{\WJD}_{L^2_{x,v}}^2
                     + \norm{u}_{L^2_{x, \theta}}^2}
   + \frac{D_0}{8} 
     \vpran{\int_{\{|y| \leq 1\}} \norm{\vint{\cdot}^{2s} (-\Delta_v)^s \WJD}_{L^2_{x,v}}^2 \dy}^{\frac{1}{2}} \,,
\end{aligned}
\end{equation}
where $c_{7, 5}$ only depends on $d,s, \norm{h}_{L^1(-1,1)},$ and $\norm{h}_{\mathcal{C}^1([-1,1])}$.  Furthermore, the estimate for the forcing term is 
\begin{align*}
& \hspace{2cm}\,
   \int_{\{|y| \leq 1\}} \iint_{\R^d \times \R^{d-1}}
       \abs{\tilde\CalR_f} \abs{(-\Delta_v)^s\WJD} \dx\dv\dy
\\
&\leq 
   \frac{4}{D_0} \norm{\frac{\tilde\CalR_f}{\vint{\cdot}^{2s}}}_{L^2(\R^d \times \R^{d-1})}^2 
   + \frac{D_0}{8} 
     \vpran{\int_{\{|y| \leq 1\}} \norm{\vint{\cdot}^{2s} (-\Delta_v)^s \WJD}_{L^2_{x,v}}^2 \dy}^{\frac{1}{2}} \,,
\end{align*}
Using estimate \eqref{bound:WJD-1},
\begin{align*}
  (-\Delta_v)^{s/2}\WJD(t, \cdot, \cdot) \in L^2_{x,v} \,, 
\qquad
   t \in (t_3, t_1) \,\, a.e.
\end{align*}
Let $t_4 \in (t_3, t_1)$ be arbitrary. For any $t_{m, 3} \in (t_3, t_4)$ and $t \in (t_4, t_1)$ such that 
\begin{align*}
   (-\Delta_v)^{s/2} \WJD (t_{m, 3}, \cdot, \cdot) \in L^2_{x,v} \,,
\qquad
   (-\Delta_v)^{s/2} \WJD (t, \cdot, \cdot) \in L^2_{x,v} \,,
\end{align*} 
we integrate~\eqref{est:energy-w-J-delta} over $(t_{m,3}, t)$ and apply the estimates~\eqref{bound:W-J-s}, \eqref{bound:R-f-H-ks-1}, \eqref{est:R-1-R-2}, \eqref{bound:R-w-1}, and \eqref{bound:R-w-2}. Then 
\begin{align}
\frac{D_0}{2} \int_{t_4}^{t_1}& \int_{\{|y| \leq 1\}} 
         \norm{\vint{\cdot}^{2s} (-\Delta_v)^s \WJD}_{L^2_{x,v}}^2 (y) \dy \dt \nn
\\
&\leq
   c_{7,6} \vpran{\norm{\WJD}_{L^2((t_3, t_1) \times \R^d \times \R^{d-1})}^2
   + \norm{(u, \Grad u)}_{L^2((t_3, t_1) \times \R^d \times \Ss^{d-1})}^2} \nn
\\
& \hspace{-1cm}
   + c_{7,6} \norm{\vint{\cdot}^{2s}(-\Delta_v)^s \WJ}_{L^2((t_3, t_1) \times \R^d \times \R^{d-1})}^2
   + \int_{\{|y| \leq 1\}}\norm{(-\Delta_v)^{s/2} \WJD}_{L^2_{x,v}}^2 (t_{m,3}, y) \dy \nn \,,
\\
&\hspace{1cm}\leq
  c_{7, 7} \norm{u}_{L^2((t_0, t_1) \times \R^d \times \Ss^{d-1})}^2
  + c_{7,8} \norm{(-\Delta_v)^{\frac{k+1}{2} s} \WJ (t_{m,3})}_{L^2_{x,v}}^2 \,,
 \nn
\end{align}
where $c_{7,6}, c_{7, 7}, c_{7,8}$ only depend on $d, s, \norm{h}_{L^1(-1,1)},$ and $\norm{h}_{\mathcal{C}^1([-1,1])}$. Taking average over $t_{m,3} \in (t_0, t_4)$ and using~\eqref{bound:WJD-1}, we obtain that 
\begin{align} \label{est:W-J-D-1}
   \int_{t_4}^{t_1} \int_{\{|y| \leq 1\}} 
         \norm{\vint{\cdot}^{2s} (-\Delta_v)^s \WJD}_{L^2_{x,v}}^2 (y) \dy \dt
\leq
  c_{7, 9} \norm{u}_{L^2((t_0, t_1) \times \R^d \times \Ss^{d-1})}^2 \,,
\end{align}
where $c_{7,9}$ only depend on $d, s, \norm{h}_{L^1(-1,1)}$, $\norm{h}_{\mathcal{C}^1([-1,1])}$, and $\frac{1}{t_4 - t_0}$.  In addition to this,
\begin{equation} 
   \label{est:W-J-D-2}
\begin{aligned}
& \hspace{2cm}
   \int_{t_4}^{t_1} \int_{\{|y| \geq 1\}} 
         \norm{(-\Delta_v)^s \WJD}_{L^2_{x,v}}^2 (y) \dy\dt
\\
&= \int_{t_4}^{t_1} \iint_{\R^d \times \R^{d-1}} \int_{\{|y| \geq 1\}} 
         \frac{\abs{(-\Delta_v)^s \WJ(v+y) - (-\Delta_v)^s \WJ(v)}^2}
                {\abs{y}^{d-1 + ks}} \dy\dv\dx\dt
\\
&\hspace{2cm}\leq 
    c_{7, 10} \int_{t_4}^{t_1} \iint_{\R^d \times \R^{d-1}}
              \abs{ (-\Delta_v)^s \WJ}^2 \dv\dx\dt \,.
\end{aligned}
\end{equation}
where $c_{7, 10}$ only depends on $d,s$. Combining~\eqref{bound:W-J-s}, ~\eqref{est:W-J-D-1}, and~\eqref{est:W-J-D-2}, we have
\begin{align*}
     \int_{t_4}^{t_1}  
        \norm{(-\Delta_v)^{\frac{k+2}{2}s}\WJ(t)}_{L^2_{x,v}}^2 \dt
\leq
    c_{7, 11} \norm{u}_{L^2((t_0, t_1) \times \R^d \times \Ss^{d-1})}^2\,.
\end{align*}
where $t_4 \in (t_0, t_1)$ is arbitrary, and $c_{7, 11}$ only depends on $d, s, \norm{h}_{L^1(-1,1)}$, $\norm{h}_{\mathcal{C}^1([-1,1])}$, and $\frac{1}{t_4 - t_0}$.  We thereby finish the proof for the induction~\eqref{assump:W-J-reg-4} for $k \geq 1$ and $\frac{k+1}{2} s < 1$.  Furthermore,  the above argument applied to $\Grad u$ gives 
\begin{align} \label{bound:w-J-x-v}
     \norm{\nabla_v \Grad \WJ}_{L^2((t_*, t_1) \times \R^{d} \times \R^{d-1})}
\leq 
   c_{7, 12} \norm{u}_{L^2((t_0, t_1) \times \R^d \times \Ss^{d-1})} \,,
\end{align}
where $c_{7, 12}$ only depends on $d, s$, $\norm{h}_{\mathcal{C}^1([-1,1])}$, and $\frac{1}{t_\ast - t_0}$.\\

\noindent
If $k$ is sufficiently large such that $\frac{k+1}{2}s \geq 1$, we can apply $\nabla_v^{\big[\frac{k+1}{2}s \big]}$ to equation~\eqref{eq:W-J} first and repeat the above procedures for the fractional derivatives. Specifically, suppose we have shown that for some integer $\mathcal{M} \geq 1$ and any $m \in \N^{d-1}$ satisfying $1 \leq |m| \leq \mathcal{M}$ and any $t_\ast \in (t_0, t_1)$,
\begin{align}
   & \int_{t_\ast}^{t_1}
        \norm{\nabla_v^m \WJ(t)}_{L^2(\R^{d-1}_v; H^1(\R^d_x))}^2 \dt
\leq
    c_{7, 13} \norm{u}_{L^2((t_0, t_1) \times \R^d \times \Ss^{d-1})}^2\,,
    \label{bound:w-J-m-1}
\\
    \int_{t_\ast}^{t_1} \int_{\{|y| \leq 1\} }
       & \norm{\vint{\cdot}^{2s} (-\Delta_v)^{s} \nabla_v^{\hat m} w_{\CalJ, l}(t)}_{L^2_{x,v}}^2 \dt
\leq
    c_{7, 13} \norm{u}_{L^2((t_0, t_1) \times \R^d \times \Ss^{d-1})}^2\,,
    \label{bound:w-J-m-2}
\end{align}
where $|\hat m| = |m| - 1$, $c_{7, 13}$ depends on $d,s,\frac{1}{t_\ast - t_0}$ and $\norm{h}_{C^{\mathcal{M}}(-1, 1)}$, and
\begin{align*}
     w_{\CalJ, l} = \frac{\WJ(v+y) - \WJ(v)}{|y|^{\frac{d-1}{2} + \frac{l}{2}s}}
\qquad
    \text{for any $y \in \R^{d-1}\setminus\{0\}$} \,.
\end{align*}
for any $l \geq 1$ and $l s < 1$. Note that~\eqref{bound:w-J-m-2} indeed holds for $\mathcal{M} = 1$ as shown in~\eqref{est:W-J-D-1}. Apply $\nabla_v^m$ to~\eqref{rte1} to obtain
\begin{align} 
     \del_t (\nabla_v^m \WJ) + &\theta(v) \cdot \Grad (\nabla_v^m \WJ) \nn
\\
&    = - D_0 \vint{v}^{4s} (-\Delta_v)^s (\nabla_v^m \WJ)
        + \CalR_{w,1}(\nabla_v^m \WJ) 
        + \sum_{j=1}^4 R_{f, j} \,, \label{eq:W-J-deriv-m}
\end{align}
where $\CalR_{w,1}$ is defined in~\eqref{def:R-w-1} and the remainder terms $R_{f,j}$'s are 
\begin{align*}
    R_{f, 1} 
    &= \sum_{\substack{|m_1| + |m_2| = |m|, \\ |m_2| \leq |m|-1}} 
          c_{m_1, m_2} \nabla_v^{m_1} \theta (v) \cdot \Grad \nabla_v^{m_2} \WJ \,,
\\
    R_{f, 2} 
   & = -D_0 \sum_{\substack{|m_1| + |m_2| = |m|, \\ |m_2| \leq |m|-1}} 
          c_{m_1, m_2} \nabla_v^{m_1} \vpran{\vint{v}^{4s}} 
          (-\Delta_v)^s \nabla_v^{m_2} \WJ \,,
\\
    R_{f,3} 
   & = \norm{h}_{L^1} \nabla_v^m \WJ \,,
\qquad
    R_{f,4}
    = \int_{\Ss^{d-1}} u(\theta') 
             \nabla_v^m \vpran{\frac{h(\theta \cdot \theta')}{\vint{v}^{d-1-2s}}} \dtheta' \,.
\end{align*}
Recall that $h$ is defined in~\eqref{def:h}. Thus, if we can show that each $R_{f, j}$ ($1\leq j \leq 4$) satisfies the same bound as $R_f$ in~\eqref{bound:R-f-H-ks-1}, 
then we can derive by the previous argument that for any $|m'| = \mathcal{M} + 1$ and some $c_{7, 14}$ depending only on $d,s,\frac{1}{t_\ast-t_0}$, and $\norm{h}_{\mathcal{C}^{\mathcal{M}+1}}$,
\begin{equation} \label{bound:w-J-induction}
\begin{aligned}
   & \int_{t_\ast}^{t_1}
        \norm{\nabla_v^{m'} \WJ(t)}_{L^2(\R^{d-1}_{v}; H^1(\R^{d}_{x}))}^2 \dt
\leq
    c_{7, 14} \norm{u}_{L^2((t_0, t_1) \times \R^d \times \Ss^{d-1})}^2\,,
\\
    \int_{t_\ast}^{t_1} \int_{\{|y| \leq 1\} }
       & \norm{\vint{\cdot}^{2s} (-\Delta_v)^{|m'|-1+s}  \WJD(t)}_{L^2_{x,v}}^2 \dt
\leq
    c_{7, 14} \norm{u}_{L^2((t_0, t_1) \times \R^d \times \Ss^{d-1})}^2\,,
\end{aligned}
\end{equation}
which then concludes the induction proof, and hence, we prove the estimates~\eqref{bound:high-order-deriv-u-w-J}.  Note that $R_{f,3}$ can be obsorbed into $\CalR_{w,1}(\nabla_v^m \WJ)$ in~\eqref{eq:W-J-deriv-m}. So we only need to check $R_{f,j}$ for $j=1,2,4$.

Let us show first that $R_{f,1}$ and $R_{f,4}$ satisfy the bound~\eqref{bound:R-f-H-ks-1} with $R_f$ replaced by these $R_{f, j}$'s. Indeed, 
\begin{align*}
    \int_{t_\ast}^{t_1} &\norm{\nabla_v R_{f, 1}}_{L^2_{x,v}}^2 \dt \\
&\leq 
    c_{7, 15} 
      \sum_{|m|=1} ^{\mathcal{M}}\int_{t_\ast}^{t_1}
           \norm{\Grad (-\Delta_v)^{\frac{|m|}{2}} \WJ}_{L^2_{x,v}}^2 \dt
\leq c_{7, 16} \norm{u}_{L^2{((t_0, t_1) \times \R^d \times \R^{d-1}})}^2 \,,
\end{align*}
where $c_{7, 16}$ only depends on $d, c_{7, 13}$ hence $d,s, \frac{1}{t_\ast - t_0}$, and $\norm{h}_{\mathcal{C}^{\mathcal{M}}(-1, 1)}$.  Hence~\eqref{bound:R-f-H-ks-1} is satisfied by interpolation.  Next, by the assumption of $h$ in~\eqref{def:h}, we have that for any $|m'| = \mathcal{M} + 1$,
\begin{align*}
    \abs{\nabla_v^{m'} \vpran{\frac{h(\theta \cdot \theta')}{\vint{v}^{d-1-2s}}}}
\leq 
    c_{m'} \frac{\norm{h}_{\mathcal{C}^{\mathcal{M} + 1}}}{\vint{v}^{d-1-2s}} \,.
\end{align*}
Therefore, 
\begin{align*}
   \int_{t_\ast}^{t_1} 
     \norm{\frac{1}{\vint{\cdot}^{2s}}\nabla_v R_{f,4}}_{L^2_{x,v}}^2 \dt
\leq 
   c_{m'} \norm{h}_{\mathcal{C}^{\mathcal{M} + 1}}^2 
   \norm{u}_{L^2{((t_0, t_1) \times \R^d \times \R^{d-1}})}^2 \,,
\end{align*}
where $c_{m'}$ only depends on $\mathcal{M}$. In addition, we have
\begin{align*}
    \int_{t_\ast}^{t_1} \norm{\frac{R_{f, j}}{\vint{\cdot}^{2s}}}_{L^2_{x,v}}^2 \dt
\leq
    c_{7, 19} \norm{u}_{L^2{((t_0, t_1) \times \R^d \times \R^{d-1}})}^2\,,\;\; \text{for}\;\;j=1,4\,.
\end{align*}
Hence by interpolation, the remainder terms $R_{f, 1}$ and $R_{f,4}$ satisfy~\eqref{bound:R-f-H-ks-1}. 
Finally, let 
\begin{align*}
    \tilde\CalR_{f,2} 
= \frac{\CalR_{f,2}(v+y) - \CalR_{f,2}(v)}
          {|y|^{\frac{d-1}{2} + \frac{k}{2}s}}  \,.
\end{align*}
The leading order term in $\tilde\CalR_{f,2}$ are bounded as
\begin{align*}
\int_{t_\ast}^{t_1} &\int_{\{|y| \leq 1\}}
      \norm{\frac{1}{\vint{\cdot}^{2s}} \nabla_v^{m_1}\vint{\cdot}^{4s}(-\Delta_v)^{s} \nabla_v^{m_2} \WJD(t)}_{L^2_{x,v}}^2 \dt
\\
&\hspace{-.3cm}\leq
    4 \int_{t_\ast}^{t_1} \int_{\{|y| \leq 1\}}
       \norm{\vint{\cdot}^{2s} (-\Delta_v)^{s} \nabla_v^{m_2} \WJD(t)}_{L^2_{x,v}}^2 \dt
\leq   
   4 \, c_{7, 13} \norm{u}_{L^2((t_0, t_1) \times \R^d \times \Ss^{d-1})}^2 \,,
\end{align*}
for any $|m_1| = 1$ and $|m_2| = |m| - 1$. The rest of the terms in $\tilde\CalR_{f, 2}$ satisfy that
\begin{align*}
    \int_{t_\ast}^{t_1} &\int_{|y| \leq 1 }
      \norm{\frac{1}{\vint{v}^{2s}} 
                \frac{\abs{\nabla_v^{m_1}\vint{v+y}^{4s}
                        - \nabla_v^{m_1}\vint{v}^{4s}}}{|y|^{\frac{d-1}{2}+\frac{k}{2}s}} 
                (-\Delta_v)^{s} \nabla_v^{m_2} \WJ(t)}_{L^2_{x,v}}^2 \dy\dt
\\
& \leq 
    \int_{t_\ast}^{t_1} \iint_{\R^d \times \R^{d-1}} \int_{\{|y| \leq 1\} }
      \frac{\vint{v}^{4s}}{|y|^{d-1+ks-1}} 
                \abs{(-\Delta_v)^{s} \nabla_v^{m_2} \WJ(t)}^2 \dy\dv\dx\dt
\\
&\hspace{-.3cm} \leq c_{7,21}
    \int_{t_\ast}^{t_1} \big\|\vint{\cdot}^{2s}(-\Delta_v)^{s} \nabla_v^{m_2} \WJ(t)\big\|^{2}_{L^{2}_{x,v}}\dt
\leq c_{7,21} c_{7, 13}
            \norm{u}_{L^2((t_0, t_1) \times \R^d \times \Ss^{d-1})}^2
\end{align*}
for any $|m_1| \geq 2$ and $|m_2| = |m| - |m_1|$.  Here $c_{7, 21}$ only depends on $d,s$.  Hence $\CalR_{f,2}$ also acts similarly as $R_f$ in the previous proof.  In conclusion, all the remainder terms does not affect the energy bound and similar estimates as for $\mathcal{M} = 1$ apply to~\eqref{eq:W-J-deriv-m} which give the desired higher order bounds~\eqref{bound:w-J-induction} in the induction argument.  This concludes the proof.\end{proof}
%
\begin{Proposition} \label{prop:u-time-reg}
Suppose $u \in L^2((t_0, t_1) \times \R^d \times \Ss^{d-1})$ is a solution to~\eqref{rte1}. Then for any $j_1, j_2 \in \N$ and any $t_\ast \in (t_0, t_1)$  there exists $c_{j_1,j_2}$ such that
\begin{align*} 
     \norm{(-\Delta_x)^{j_1} \del_t^{j_2} \frac{\WJ}{\vint{v}^{2s}}}_{L^2((t_{\ast}, t_1) \times \R^d \times \R^{d-1})}
\leq 
     c_{j_1,j_2} \norm{u}_{L^2((t_0, t_1) \times \R^d \times \Ss^{d-1})} \,,
\end{align*}
where $c_{j_1,j_2}$ only depends on $d, s, b, \frac{1}{t_\ast - t_0}$. In particular, $c_{j_1, j_2}$ is independent of $t_1$.
\end{Proposition}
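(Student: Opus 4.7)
The plan is to prove Proposition~\ref{prop:u-time-reg} by induction on $j_2 \geq 0$, with the spatial derivatives handled essentially for free via Proposition~\ref{prop:infty-reg}. For the base case $j_2=0$, the change of variables $\WJ/\vint{v}^{2s}=u_\CalJ/\vint{v}^{d-1}$ together with $\dtheta = 2^{d-1}\dv/\vint{v}^{2(d-1)}$ gives the isometry
\begin{equation*}
\bigl\|(-\Delta_x)^{j_1}\bigl(\WJ/\vint{v}^{2s}\bigr)\bigr\|_{L^2_{x,v}}^2
= 2^{-(d-1)}\bigl\|(-\Delta_x)^{j_1}u\bigr\|_{L^2_{x,\theta}}^2,
\end{equation*}
and the right-hand side is already controlled by the first estimate in~\eqref{bound:high-order-deriv-u-w-J}.

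For the induction step, I would use equation~\eqref{eq:W-J} with $\CalR_f\equiv 0$ and divide by $\vint{v}^{2s}$ to obtain the identity
\begin{equation*}
\partial_t\!\Bigl(\tfrac{\WJ}{\vint{v}^{2s}}\Bigr)
= -\,\tfrac{\theta(v)}{\vint{v}^{2s}}\!\cdot\nabla_x\WJ
\;-\; D_0\,\vint{v}^{2s}(-\Delta_v)^s\WJ
\;+\; \tfrac{\CalR_w(\WJ)}{\vint{v}^{2s}}.
\end{equation*}
Applying $(-\Delta_x)^{j_1}\partial_t^{j_2-1}$ and noting that the three right-hand terms are each of the form ``pure $x$- or $v$-operator acting on a $\partial_t^{j_2-1}$ quantity of the same type as in the statement'': the transport term is bounded since $|\theta(v)|\le 1$ and the $L^2$ norm of $\nabla_x\WJ/\vint{v}^{2s}$ coincides (up to $2^{(d-1)/2}$) with $\|\nabla_x u\|_{L^2_{x,\theta}}$; the fractional Laplacian term is covered by the mixed $(x,v)$-bounds from~\eqref{bound:high-order-deriv-u-w-J} applied to $\partial_t^{j_2-1}\WJ$; and the $\CalR_w$ term splits into $\CalR_{w,1}=c_{d,s,3}\WJ$, which is absorbed into the induction hypothesis, and $\CalR_{w,2}/\vint{v}^{2s}$, which is bounded by the $L^2(\Ss^{d-1})$-boundedness of $\CalL_h$ from~\eqref{smooth0.5}. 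To make this rigorous I would choose a finite nested sequence $t_0<\tau_{j_2}<\tau_{j_2-1}<\cdots<\tau_1=t_\ast$ and apply the induction hypothesis on the interval $(\tau_{j+1},t_1)$ to bound the contribution arising on $(\tau_j,t_1)$, losing a factor $(\tau_j-\tau_{j+1})^{-1/2}$ at each stage; since $j_2$ is fixed this only produces a finite constant depending on $\tfrac{1}{t_\ast-t_0}$.

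A key auxiliary point that needs verifying is that Proposition~\ref{prop:infty-reg} actually supplies the \emph{mixed} bound $\|(-\Delta_x)^{j_1}\vint{v}^{2s}(-\Delta_v)^{(k+1)s/2}\WJ\|_{L^2_{t,x,v}}\lesssim\|u\|_{L^2_{t,x,\theta}}$. This follows by first applying $(-\Delta_x)^{j_1}$ to~\eqref{rte1} (which, as already used in the proof of Proposition~\ref{prop:infty-reg}, yields an equation of the same form satisfied by $(-\Delta_x)^{j_1}u$) and then running the induction in $k$ for the fractional $v$-derivatives on that equation with a slightly shrunken time interval.

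The main obstacle is bookkeeping rather than analysis: the weight $\vint{v}^{2s}$ and the $\theta(v)$ factor do not commute with $(-\Delta_v)^s$, so when $\partial_t^{j_2}$ is expanded via the equation each iteration produces commutators of the form $\bigl[\theta(v),(-\Delta_v)^s\bigr]$ and $\bigl[\vint{v}^{4s},(-\Delta_v)^s\bigr]$ acting on intermediate quantities. These are handled in exactly the same spirit as the remainder terms $R_{f,1}$, $R_{f,2}$, $R_{f,4}$, $\tilde\CalR_{f,2}$ treated at the end of the proof of Proposition~\ref{prop:infty-reg}, where the commutator weights are dominated by the available $\vint{v}^{2s}(-\Delta_v)^s$ estimates on $\WJ$, so the inductive bound closes with a constant depending only on $d,s,b,\frac{1}{t_\ast-t_0}$ and independent of $t_1$.
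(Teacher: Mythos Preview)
Your approach is correct and ultimately relies on the same ingredients as the paper, but the paper's organization is considerably cleaner. The paper does not expand $\partial_t^{j_2}$ through the equation term by term; instead it observes once and for all that $\partial_t^{j} u$ itself satisfies \eqref{rte1} (since $\partial_t$ commutes with both $\theta\cdot\nabla_x$ and $\CalL$). Hence the single estimate $\|\partial_t u\|_{L^2((t_\ast,t_1)\times\R^d\times\Ss^{d-1})}\leq c\|u\|_{L^2((t_0,t_1)\times\R^d\times\Ss^{d-1})}$, obtained from Proposition~\ref{prop:infty-reg} and~\eqref{bound:W-J-s} exactly as in your base case, can be applied verbatim to the solution $\partial_t^{j-1}u$ on a slightly smaller interval, giving $\|\partial_t^{j}u\|\leq c\|\partial_t^{j-1}u\|\leq c'\|u\|$. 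The same remark handles the mixed derivatives $(-\Delta_x)^{j_1}\partial_t^{j_2}u$.

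This shortcut eliminates all of the commutator bookkeeping you anticipate in your last paragraph: since you never write $\partial_t^{j_2}$ as a composition of $v$- and $x$-operators, the noncommutation of $\theta(v)$ or $\vint{v}^{4s}$ with $(-\Delta_v)^s$ simply never arises. Your auxiliary point about mixed $(x,v)$-bounds is correct but is likewise subsumed: once $\partial_t^{j-1}u$ is recognized as a solution in $L^2$, Proposition~\ref{prop:infty-reg} applies to it directly and delivers the needed $\vint{v}^{2s}(-\Delta_v)^s$ control. In short, your proof would go through, but the paper's version is a two-line induction rather than a multi-term expansion.
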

\begin{proof}
We will apply an induction argument. First, by Proposition~\ref{prop:infty-reg} and~\eqref{bound:W-J-s},
\begin{align} \label{est:W-J-6}
     \norm{\del_t u}_{L^2((t_\ast, t_1) \times \R^d \times \Ss^{d-1})}
     = \norm{\del_t \frac{\WJ}{\vint{v}^{2s}}}_{L^2((t_\ast, t_1) \times \R^d \times \R^{d-1})}
\leq 
     c_{8,1} \norm{u}_{L^2((t_0, t_1) \times \R^d \times \Ss^{d-1})} \,,
\end{align}
where $c_{8,1}$ only depends on $d,s, \frac{1}{t_\ast - t_0}$, and the kernel $b$. In general, suppose 
\begin{align*} 
     \norm{\del_t^j u}_{L^2((t_\ast, t_1) \times \R^d \times \Ss^{d-1})}
     = \norm{\del_t^j \frac{\WJ}{\vint{v}^{2s}}}_{L^2((t_\ast, t_1) \times \R^d \times \R^{d-1})}
\leq 
     c_{8,2} \norm{u}_{L^2((t_0, t_1) \times \R^d \times \Ss^{d-1})} \,,
\end{align*}
for some $j \geq 1$ and $c_{8,2}$ depending on $d,s,j, \frac{1}{t_\ast - t_0}$, and the kernel $b$. Then $\del_t^j  u(t, \cdot, \cdot) \in L^2_{x,v}$ for $t \in (t_\ast, t_1)$ a.e.  Moreover, $\del_t^j  u$ satisfies the transport equation
\begin{align*} 
    \del_t \vpran{\del_t^j u}
    + \theta\cdot \Grad \vpran{\del_t^j u}
  = \CalL\vpran{\del_t^j u}
\end{align*}
with the initial data in $L^2_{x,\theta}$. Hence~\eqref{est:W-J-6} applies and gives that for any $t_{\ast\ast} \in (t_\ast, t_1)$,
\begin{align*} 
     \norm{\del_t^{j+1} \frac{\WJ}{\vint{v}^{2s}}}_{L^2((t_{\ast\ast}, t_1) \times \R^d \times \R^{d-1})}
\leq 
     c_{8,3} \norm{u}_{L^2((t_0, t_1) \times \R^d \times \Ss^{d-1})} \,,
\end{align*}
where $c_{8,3}$ only depends on $d,s,b,j, \frac{1}{t_{\ast\ast} - t_\ast}$ for any $j \geq 1$. Since $t_\ast$ is arbitrary, we have that for any $j \geq 1$,
\begin{align*} 
     \norm{\del_t^{j} \frac{\WJ}{\vint{v}^{2s}}}_{L^2((t_{\ast}, t_1) \times \R^d \times \R^{d-1})}
\leq 
     c_{8,4} \norm{u}_{L^2((t_0, t_1) \times \R^d \times \Ss^{d-1})} \,,
\end{align*}
where $c_{8,4}$ only depends on $d,s,b,j, \frac{1}{t_\ast - t_0}$. Similarly, one can use similar induction argument to show that for any $j_1, j_2 \geq 0$ and $t_\ast \in (t_0, t_1)$,
\begin{align*} 
     \norm{(-\Delta_x)^{j_1} \del_t^{j_2} \frac{\WJ}{\vint{v}^{2s}}}_{L^2((t_{\ast}, t_1) \times \R^d \times \R^{d-1})}
\leq 
     c_{j_1,j_2} \norm{u}_{L^2((t_0, t_1) \times \R^d \times \Ss^{d-1})} \,,
\end{align*}
where $c_{j_1,j_2}$ only depends on $d,s,b,j_1, j_2, \frac{1}{t_\ast - t_0}$. 
\end{proof}

\section{Existence, Uniqueness and stability}
In this section we use the previous work to prove the main theorem of the paper.  We start with a lemma that approximates the limiting model by integrable Henyey-Greenstein models.  The main theorem will follow from here.  We just recall the notation here
\begin{subequations}
\begin{align}
\text{Approximating Kernel :} \; b^{g}_{s}(z)&= b^{g}(z) + h(z)\,,\;\;h(z)=\frac{\tilde{b}(z)}{(1-z)^{\frac{d-1}{2}+s}}\,,\;\; g\in(0,1)\,.\label{e0EUS}\\
\text{Limiting Kernel : }\;b_{s}(z)&=\frac{1}{(1-z)^{\frac{d-1}{2}+s}}+ h(z)\,. \label{e0.1EUS}
\end{align}
\end{subequations}
Recall that the explicit form of  the approximating scattering kernels is the rescaled Henyey-Greenstein type models
\begin{equation*}
b^{g}(z) = \frac{1+g}{(1+g^2 - 2\,g\,z)^{\frac{d-1}{2}+s}} = \frac{1+g}{( (g-1)^2 + 2g(1-z) )^{\frac{d-1}{2}+s}}\,.
\end{equation*}
With this in mind it will be convenient to introduce the operator $(-\Delta_{v})^{s}_{g}$ which approximates the $s$-fractional Laplacian
\begin{align*}
(-\Delta_{v})^{s}_{g}\psi_{\mathcal{J}}:&=-\int_{\mathbb{R}^{d-1}}\frac{ \psi_{\mathcal{J}}(v') -  \psi_{\mathcal{J}}(v)}{\delta_{g}(v,v')}\,\dv' \\
&=-\int_{\mathbb{R}^{d-1}}\frac{ \psi_{\mathcal{J}}(v+z) -  \psi_{\mathcal{J}}(v)}{\delta_{g}(v,v+z)}\,\dz \\
\text{where }\;\delta_{g}(v,v'):&=(1+g)^{-1}\big( (g-1)^{2}\langle v \rangle^{2}\langle v' \rangle^{2} + 4g|v'-v|^{2} \big)^{\frac{d-1}{2}+s}\,.
\end{align*}
Note that for each $g \in (0, 1)$, the operator $(-\Delta_v)^s_g$ is bounded on $L^2(\R^{d-1})$. Furthermore, we have the following useful inequalities that follows from the symmetry of $\delta_{g}(v,v')$
\begin{align}\label{e-1EUS}
0\leq \int_{\mathbb{R}^{d-1}}\psi_{\mathcal{J}}&\,(-\Delta_{v})^{s}_{g}\psi_{\mathcal{J}} \dv 
= \tfrac{1}{2} \int_{\mathbb{R}^{d-1}}\int_{\mathbb{R}^{d-1}}\frac{\big(\psi_{\mathcal{J}}(v') - \psi_{\mathcal{J}}(v)\big)^{2}}{\delta_{g}(v,v')} \dv\dv'\nonumber\\
&\lesssim \int_{\mathbb{R}^{d-1}}\big|(-\Delta_{v})^{s/2}\psi_{\mathcal{J}}\big|^{2}\dv=\|\psi\|^{2}_{H^{s}_{\theta}}\,,\quad\quad g\in(0,1]\,.
\end{align}
Finally, it will also be convenient to express the scattering approximating operator in terms of $(-\Delta_{v})^{s}_{g}$ as we did in the second section with the limiting operator in equation \eqref{main:smooth1}.  Indeed, performing analog computations to those of \eqref{smooth1} it follows that
\begin{equation}\label{e-2EUS}
\frac{\big[\mathcal{I}_{b^g}(u)\big]_{\mathcal{J}}}{\langle\cdot\rangle^{d-1}} = -\langle\cdot\rangle^{2s}(-\Delta_{v})^{s}_{g}w_{\mathcal{J}}+u_{\mathcal{J}}\langle\cdot\rangle^{2s}(-\Delta_{v})^{s}_{g}\frac{1}{\langle\cdot\rangle^{d-1-2s}}\,.
\end{equation}  
\begin{Proposition}\label{l1EUS}
(Estimates on physical solutions) Let $u_o$ be a nonnegative initial state such that $[u_{o}]_{\mathcal{J}}\in\mathcal{C}^{2}_{x,v}$ with compact support and consider a scattering kernel \eqref{e0EUS} with $h \in L^{1}_{\theta}$.  Then, the radiative transport equation with scattering kernel $b^{g}_{s}$ has a unique solution $u^{g}\geq0$ such that $u^{g},\,\nabla_{x}u^{g},\,\partial_{t}u^{g},\,\mathcal{I}_{b^g}(u^{g})\in \mathcal{C}\big([0,T);L^{2}_{x,\theta}\big)$. Moreover, for any time $T > 0$, the solution $u^g$ satisfies
\begin{subequations}
\begin{align}
\sup_{t\geq0}\|u^{g}(t)\|_{L^{2}_{x,\theta}}\leq \|u_o\|_{L^{2}_{x,\theta}}\,, &\quad\quad \sup_{t\geq0}\|\nabla_{x} u^{g}(t)\|_{L^{2}_{x,\theta}}\leq \|\nabla_{x} u_o\|_{L^{2}_{x,\theta}}\,,\label{e1EUS}\\
\sup_{t\geq0}\|\partial_{t}u^{g}(t)\|_{L^{2}_{x,\theta}} &+ \sup_{t\geq0}\|\mathcal{I}_{b^g}(u^{g})\|_{L^{2}_{x,\theta}}\leq C\|[u_o]_{\mathcal{J}}\|_{\mathcal{C}^{2}_{x,v}}\,,\label{e1.1EUS}
\end{align}
\end{subequations}
where the constant $C:=C(supp(u_o))$ is independent of the approximating parameter $g$.
\end{Proposition}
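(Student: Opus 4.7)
The key observation is that for each fixed $g \in (0,1)$ the kernel $b^g_s$ is integrable on the sphere: indeed, $(1+g^2-2gz) \geq (1-g)^2 > 0$ keeps $b^g(z)$ uniformly bounded on $[-1,1]$, and $h \in L^1_\theta$ by assumption. Hence $\mathcal{I}_{b^g_s}$ is a bounded linear operator on $L^2_\theta$, and the radiative transfer equation reduces to a classical linear transport equation with a bounded zeroth-order perturbation. Existence and uniqueness of $u^g \in \mathcal{C}([0,T);L^2_{x,\theta})$ will follow from a Banach fixed-point argument on the mild formulation
\begin{equation*}
u^g(t) \,=\, S(t) u_o + \int_0^t S(t-\tau)\,\mathcal{I}_{b^g_s}(u^g(\tau))\,\mathrm{d}\tau,
\end{equation*}
with $S(t)\psi(x,\theta)=\psi(x-t\theta,\theta)$ the free transport semigroup; working on $\mathcal{C}([0,T];H^1_xL^2_\theta)$ simultaneously yields the $x$-regularity. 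Nonnegativity is obtained by writing $\mathcal{I}_{b^g_s}(u)=K_g(u)-\sigma_g u$ with $K_g(u)(\theta)=\int u(\theta')b^g_s\,\mathrm{d}\theta'$ and $\sigma_g=\int b^g_s\,\mathrm{d}\theta'$: the Duhamel iteration started from $u_o \geq 0$ preserves nonnegativity at every step.

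For the bounds in \eqref{e1EUS}, multiply the equation by $u^g$ and integrate over $\mathbb{R}^d\times\mathbb{S}^{d-1}$. The transport term vanishes by integration by parts, and the weak formulation \eqref{wf} yields
\begin{equation*}
\int_{\mathbb{R}^d\times\mathbb{S}^{d-1}} \mathcal{I}_{b^g_s}(u^g)\,u^g\,\mathrm{d}\theta\,\mathrm{d}x
\,=\, -\tfrac{1}{2}\int\!\!\int\!\!\int (u^g(\theta')-u^g(\theta))^2\, b^g_s\,\mathrm{d}\theta'\,\mathrm{d}\theta\,\mathrm{d}x \,\leq\, 0,
\end{equation*}
so $\|u^g(t)\|_{L^2_{x,\theta}}$ is non-increasing. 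Since the coefficients are $x$-independent, $\nabla_x u^g$ solves the same equation with datum $\nabla_x u_o$, and the identical energy estimate gives the second bound.

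The real obstacle is \eqref{e1.1EUS}, where the constants must be uniform as $g\to 1$. Differentiating in $t$, $\partial_t u^g$ solves the same transport equation with datum $\partial_t u^g|_{t=0}=-\theta\cdot\nabla_x u_o+\mathcal{I}_{b^g_s}(u_o)$, and the energy estimate above reduces the task to bounding $\|\mathcal{I}_{b^g_s}(u_o)\|_{L^2_{x,\theta}}$ independently of $g$. Since $[u_o]_\mathcal{J}\in\mathcal{C}^2_{x,v}$ has compact support in $v$, the lift $u_o$ is globally $\mathcal{C}^2$ on $\mathbb{S}^{d-1}$ (vanishing in a neighborhood of the north pole). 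For each fixed $x,\theta$, a second-order Taylor expansion of $u_o(x,\cdot)$ around $\theta$, combined with the rotational symmetry of $b^g_s(\theta\cdot\theta')$ around the $\theta$-axis (which annihilates the tangential first-order term), gives
\begin{equation*}
|\mathcal{I}_{b^g_s}(u_o)(x,\theta)| \,\lesssim\, \|u_o\|_{\mathcal{C}^2_\theta} \int_{\mathbb{S}^{d-1}} (1-\theta\cdot\theta')\, b^g_s(\theta\cdot\theta')\,\mathrm{d}\theta'.
\end{equation*}
After the substitution $u=1-z$ and using $1+g^2-2gz=(1-g)^2+2g(1-z)$, the leading scalar integral reduces to
\begin{equation*}
\int_0^2 \frac{u^{(d-1)/2}\,(1+g)}{((1-g)^2+2gu)^{(d-1)/2+s}}\,\mathrm{d}u,
\end{equation*}
whose worst behavior near $u=0$ is $u^{-s}$, integrable since $s<1$; the $h$-contribution is handled by $\|h\|_{L^1}$. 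Hence $\|\mathcal{I}_{b^g_s}(u_o)\|_{L^\infty_\theta}\lesssim \|u_o\|_{\mathcal{C}^2_\theta}$ uniformly in $g$, and compact support in $x$ (inherited from $[u_o]_\mathcal{J}$) upgrades this to the required $L^2_{x,\theta}$ bound. Finally, from the equation $\mathcal{I}_{b^g}(u^g)=\partial_t u^g+\theta\cdot\nabla_x u^g-\mathcal{I}_h(u^g)$, combined with the $L^2$-boundedness of $\mathcal{I}_h$ from \eqref{smooth0.5}, one obtains the claimed bound on $\|\mathcal{I}_{b^g}(u^g)\|_{L^2_{x,\theta}}$, completing the proof.
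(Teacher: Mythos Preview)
Your argument is correct and follows the same global architecture as the paper: existence via integrability of $b^g_s$ for fixed $g<1$, the energy identity to control $u^g$ and $\nabla_x u^g$, the observation that $\partial_t u^g$ solves the same equation with initial datum $-\theta\cdot\nabla_x u_o+\mathcal{I}_{b^g_s}(u_o)$, and finally recovering $\mathcal{I}_{b^g}(u^g)$ from the equation. The one place where you genuinely diverge is the uniform-in-$g$ bound on $\mathcal{I}_{b^g_s}(u_o)$. The paper passes to the projected picture via \eqref{e-2EUS}, writes $[\mathcal{I}_{b^g}(u_o)]_{\mathcal{J}}$ in terms of the approximate fractional Laplacian $(-\Delta_v)^s_g$, and invokes Lemma~\ref{app:Dapproxconv} to bound $(-\Delta_v)^s_g[w_o]_{\mathcal{J}}$ and $(-\Delta_v)^s_g\langle\cdot\rangle^{-(d-1-2s)}$ uniformly for compactly supported $\mathcal{C}^2$ data. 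You instead stay on the sphere and use a second-order Taylor expansion together with the rotational symmetry of $b^g_s(\theta\cdot\theta')$ around $\theta$ to reduce to the scalar integral $\int_{\Ss^{d-1}}(1-\theta\cdot\theta')\,b^g_s(\theta\cdot\theta')\,\dtheta'$, which is indeed bounded uniformly as $g\to 1$ since $s<1$. Your route is more elementary and entirely sufficient for this proposition. The paper's route costs the technical Lemma~\ref{app:Dapproxconv} but is not wasted effort: the representation \eqref{e-2EUS} and the convergence $(-\Delta_v)^s_g\to(-\Delta_v)^s$ from that lemma are precisely what is used afterward in Proposition~\ref{p1EUS} to identify the distributional limit of $\mathcal{I}_{b^g}(u^g)$, so the machinery is being set up for the next step rather than optimized for this one.
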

\begin{proof}
Since the scattering kernel $b^{g}_{s}$ is integrable for any $g\in(0,1)$ one has that the scattering operator $\mathcal{I}_{b^g_{s}}$ is a bounded operator in $L^{2}_{x,\theta}$.  Since the initial condition $0\leq u_o\in L^{2}_{x,\theta}$, it follows that the RTE has a unique nonnegative solution $u^{g}\in\mathcal{C}([0,T);L^{2}_{x,\theta})$ satisfying such initial datum and the estimate
\begin{equation*}
\sup_{t\geq0}\|u^{g}(t)\|_{L^{2}_{x,\theta}}\leq\|u_o\|_{L^{2}_{x,\theta}}\,.
\end{equation*}
We refer to \cite[Chapter XXI - Theorem 3]{DL} for the details of the proof.  As a consequence, $\mathcal{I}_{b^g_{s}}(u^{g})\in\mathcal{C}([0,T);L^{2}_{x,\theta})$.  Furthermore, $\nabla_{x}u^{g}$ satisfies the same RTE with initial condition $\nabla_{x}u_o$, therefore, using the same theorem it follows that $\nabla_{x}u^{g}\in\mathcal{C}([0,T);L^{2}_{x,\theta})$ with estimate
\begin{equation*}
\sup_{t\geq0}\|\nabla_{x}u^{g}(t)\|_{L^{2}_{x,\theta}}\leq \|\nabla_{x}u_o\|_{L^{2}_{x,\theta}}\,.
\end{equation*}
Thus,
\begin{equation*}
\partial_{t}u^{g}=-\theta\cdot\nabla_{x}u^{g} + \mathcal{I}_{b^{g}_{s}}(u^{g})\in\mathcal{C}([0,T);L^{2}_{x,\theta})\,,
\end{equation*} 
and hence,
\begin{equation*}
(\partial_{t}u^{g})_{o}= -\theta\cdot\nabla_{x}u_o + \mathcal{I}_{b^{g}_{s}}(u_o)\,.
\end{equation*}
But $\partial_{t}u^{g}$ satisfies the same RTE with initial condition $(\partial_{t}u^{g})_o$, therefore using the same rationale 
\begin{align*}
\sup_{t\geq0}\|\partial_{t}u^{g}\|_{L^{2}_{x,\theta}}\leq\|(\partial_{t}u^{g})_o\|_{L^{2}_{x,\theta}}&=\|-\theta\cdot\nabla_{x}u_o + \mathcal{I}_{b^{g}_{s}}(u_o)\|_{L^{2}_{x,\theta}}\\
&\hspace{-3cm}\leq \|-\theta\cdot\nabla_{x}u_o\|_{L^{2}_{x,\theta}} + \|\mathcal{I}_{b^{g}}(u_o)\|_{_{L^{2}_{x,\theta}}} + \|\mathcal{I}_{h}(u_o)\|_{L^{2}_{x,\theta}}\leq C\|[u_o]_{\mathcal{J}}\|_{\mathcal{C}^{2}_{x,v}}\,,
\end{align*}
with constant $C:=C(supp(u_o))$ independent of $g\in(0,1)$.  For the last inequality we have used the formula \eqref{e-2EUS} and Lemma \ref{app:Dapproxconv} to obtain the estimate
\begin{align*}
\|\mathcal{I}_{b^{g}}(u_o)\|_{_{L^{2}_{x,\theta}}} = \Big\|\frac{[\mathcal{I}_{b^{g}}(u_o)]_{\mathcal{J}}}{\langle \cdot \rangle^{d-1}}\Big\|_{L^{2}_{x,v}} &\leq \Big\| -\langle\cdot\rangle^{2s}(-\Delta_{v})^{s}_{g}[w_{o}]_{\mathcal{J}} \Big\|_{L^{2}_{x,v}} +\nonumber\\
&\hspace{-1.5cm}+\Big\| [u_{o}]_{\mathcal{J}}\langle\cdot\rangle^{2s}(-\Delta_{v})^{s}_{g}\frac{1}{\langle\cdot\rangle^{d-1-2s}}\Big\|_{L^{2}_{x,v}} \leq C\|[u_o]_{\mathcal{J}}\|_{\mathcal{C}^{2}_{x,v}}\,,
\end{align*}
valid for a compactly supported function $[u_o]_{\mathcal{J}}\in\mathcal{C}^{2}_{x,v}$.  Finally, using the RTE once more
\begin{equation*}
\mathcal{I}_{b^{g}}(u^{g}) = \partial_{t}u^{g} + \theta\cdot\nabla_{x}u^{g} - \mathcal{I}_{h}(u^{g})
\end{equation*}
proves the estimate for $\mathcal{I}_{b^{g}}(u^{g})$.
\end{proof}
\begin{Proposition}\label{p1EUS}
(Stability and existence of solutions) Let $u_o$ be a nonnegative initial state such that $[u_{o}]_{\mathcal{J}}\in\mathcal{C}^{2}_{x,v}$ with compact support and consider a scattering kernel \eqref{e0EUS} with $h \in L^{1}_{\theta}$.  Then, the solutions $u^{g}$ of the radiative transport equation associated to the scattering kernel \eqref{e0EUS} converge weakly in $L^{2}\big([0,T); L^{2}_{x,\theta}\big)$ to a nonnegative limit $u\in \mathcal{C}\big([0,T);L^{2}_{x,\theta})$ which additionally satisfies $\nabla_{x}u,\,\partial_{t}u\in L^{\infty}\big([0,T);L^{2}_{x,\theta})$ and $(-\Delta_{v})^{s}w_{\mathcal{J}}\in L^{\infty}\big([0,T);L^{2}_{x,v})$.  Such limit is the unique solution of the radiative transport equation with kernel \eqref{e0.1EUS} satisfying the initial condition $u_0$ and the estimates
\begin{subequations}
\begin{align}
\sup_{t\geq0}\|u(t)\|_{L^{2}_{x,\theta}}\leq \|&u_o\|_{L^{2}_{x,\theta}}\,,\quad\quad \|\nabla_{x} u(t)\|_{L^{\infty}(L^{2}_{x,\theta})}\leq \|\nabla_{x} u_o\|_{L^{2}_{x,\theta}}\,,\label{e21EUS}\\
\|\partial_{t}u(t)\|_{L^{\infty}(L^{2}_{x,\theta})} &+\|\langle \cdot \rangle^{2s}(-\Delta_{v})^{s}w_{\mathcal{J}}\|_{L^{\infty}(L^{2}_{x,\theta})}\leq C\|[u_o]_{\mathcal{J}}\|_{\mathcal{C}^{2}_{x,v}}\,,\label{e22EUS}
\end{align}
\end{subequations}
where $C:=C(supp(u_o))$.
\end{Proposition}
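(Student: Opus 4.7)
The plan is to combine the uniform-in-$g$ estimates of Proposition \ref{l1EUS} with weak compactness, to identify the limit of the scattering operator using the stereographic representation \eqref{e-2EUS}, and to close with an energy argument for uniqueness.

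\textbf{Step 1: extraction of a weak limit.} The bounds \eqref{e1EUS}--\eqref{e1.1EUS} are uniform in $g$. By Banach--Alaoglu, one can extract a subsequence (not relabeled) and a non-negative function $u\in L^\infty([0,T);L^2_{x,\theta})$ together with $U_1,U_2,U_3\in L^\infty([0,T);L^2_{x,\theta})$ such that
\[
u^g\rightharpoonup u,\quad \nabla_x u^g\rightharpoonup U_1,\quad \partial_t u^g\rightharpoonup U_2,\quad \CalL_{b^g}(u^g)\rightharpoonup U_3
\]
weakly-$*$ in $L^\infty([0,T);L^2_{x,\theta})$. Testing against smooth compactly supported functions identifies $U_1=\nabla_x u$ and $U_2=\del_t u$, and preserves non-negativity of $u$. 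Passing to the limit in the equation gives $\del_t u+\theta\cdot\nabla_x u=U_3$ in $\mathcal{D}'$.

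\textbf{Step 2: identification of the scattering limit.} Use \eqref{e-2EUS} to split $\CalL_{b^g}(u^g)=\CalL_{b^g,0}(u^g)+\CalL_h(u^g)$, where the latter is $L^2_\theta$-bounded by \eqref{smooth0.5}, so $\CalL_h(u^g)\rightharpoonup \CalL_h(u)$ directly. For the leading piece, the uniform control on $\CalL_{b^g}(u^g)$ in $L^\infty L^2$ together with the remainder in \eqref{e-2EUS} (which is $u^g_\CalJ$ times a fixed bounded function, by Lemma \ref{app:Dapproxconv}) yields a uniform bound on $\langle v\rangle^{2s}(-\Delta_v)^s_g\,w^g_\CalJ$ in $L^\infty([0,T);L^2_{x,v})$. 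Extracting a further weak-$*$ limit and testing against an arbitrary $\varphi\in\mc{C}^\infty_c$ gives
\[
\int \bigl((-\Delta_v)^s_g w^g_\CalJ\bigr)\varphi\, \dv=\int w^g_\CalJ\bigl((-\Delta_v)^s_g\varphi\bigr)\dv\longrightarrow \int w_\CalJ\,(-\Delta_v)^s\varphi\, \dv
\]
using the self-adjointness of $(-\Delta_v)^s_g$ together with the strong convergence $(-\Delta_v)^s_g\varphi\to(-\Delta_v)^s\varphi$ from Lemma \ref{app:Dapproxconv} and the weak convergence of $w^g_\CalJ$ to $w_\CalJ$ (inherited from $u^g\rightharpoonup u$). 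This identifies the weak limit as $\langle v\rangle^{2s}(-\Delta_v)^s w_\CalJ$ and, combined with the projected formula \eqref{main:smooth1}, gives $U_3=\CalL_{b_s}(u)$. The non-negativity, the bounds \eqref{e21EUS}--\eqref{e22EUS} and the $L^\infty L^2$ bound on $\langle\cdot\rangle^{2s}(-\Delta_v)^s w_\CalJ$ then follow by weak lower semicontinuity.

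\textbf{Step 3: continuity, initial datum, uniqueness.} Since $\del_t u\in L^\infty([0,T);L^2_{x,\theta})$, the limit $u$ is Lipschitz-in-time with values in $L^2_{x,\theta}$, hence belongs to $\mc{C}([0,T);L^2_{x,\theta})$, and weak continuity at $t=0$ plus $u^g(0)=u_o$ forces $u(0)=u_o$. Uniqueness follows from the linearity of the equation: if $u_1,u_2$ are two solutions with the same initial datum, their difference $v=u_1-u_2$ satisfies the same RTE with $v(0)=0$, and the energy estimate \eqref{MEE2}, after dropping the non-negative $H^s_\theta$ dissipation, yields
\[
\tfrac{1}{2}\|v(t)\|_{L^2_{x,\theta}}^2\le D_1\int_0^t\|v(\tau)\|_{L^2_{x,\theta}}^2\dtau,
\]
so Gronwall gives $v\equiv 0$. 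In particular, the whole sequence $\{u^g\}$ (not just a subsequence) converges weakly to $u$.

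\textbf{Main obstacle.} The key difficulty is Step 2, the identification $U_3=\CalL_{b_s}(u)$. The operators $\CalL_{b^g}$ are singular and $g$-dependent, and only weak convergence of $u^g$ is available, so the identification must be done at the level of distributions via the stereographic representation and self-adjointness, relying crucially on Lemma \ref{app:Dapproxconv} to transfer the limit onto fixed smooth test functions. Every other step is a reasonably standard weak-compactness and Gronwall argument once the structural representation from Section~2 is in place.
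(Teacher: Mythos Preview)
Your proposal is correct and follows the paper's approach almost verbatim: uniform bounds from Proposition~\ref{l1EUS}, weak compactness, identification of the scattering limit by moving $(-\Delta_v)^s_g$ onto compactly supported test functions via self-adjointness and Lemma~\ref{app:Dapproxconv}, continuity from $\partial_t u\in L^\infty L^2$, and uniqueness to pass from subsequences to the full family.

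One small point worth tightening: the remainder in \eqref{e-2EUS} is $u^g_{\CalJ}$ multiplied by $\langle v\rangle^{2s}(-\Delta_v)^s_g\langle\cdot\rangle^{-(d-1-2s)}$, which is $g$-dependent and not obviously a uniformly bounded function of $v$ from Lemma~\ref{app:Dapproxconv} alone, since $\langle\cdot\rangle^{-(d-1-2s)}$ is not compactly supported and only \eqref{app:Dapprox:est1} applies, giving an error that grows in $v$. Your route to the bound \eqref{e22EUS} on $\langle\cdot\rangle^{2s}(-\Delta_v)^s w_{\CalJ}$ via weak lower semicontinuity of a not-fully-justified uniform $L^2$ bound is therefore less clean than the paper's, which simply reads this estimate a posteriori from \eqref{main:smooth1} once $\CalL_{b_s}(u)=U_3\in L^\infty L^2$ is known from the weak limit of $\CalL_{b^g_s}(u^g)$. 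Your own test-function identification does not need that intermediate uniform bound, so the strategy is intact; just recover \eqref{e22EUS} from the limiting equation rather than from lower semicontinuity.
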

\begin{proof}
Let $\{u^{g}\}$ the sequence formed by the approximate problems.  Thanks to Proposition \ref{l1EUS} there exists a function $u\in L^{2}\big([0,T);L^{2}_{x,\theta}\big)$ such that the following \textit{weak}-$L^{2}\big([0,T);L^{2}_{x,\theta}\big)$ convergence happens as $g\rightarrow1$ (up to extracting a subsequence if necessary)
\begin{align*}
u^{g} \rightharpoonup u\,, \quad \nabla_{x}u^{g} \rightharpoonup \nabla_{x}u\,,\quad \partial_{t}u^{g} \rightharpoonup \partial_{t}u\,, \quad \mathcal{I}_{b^g_{s}}(u^{g}) \rightharpoonup \mathcal{I}\,.
\end{align*}
Let us prove that $\mathcal{I}=\mathcal{I}_{b_s}(u)$.  Clearly $\mathcal{I}_{h}(u^{g})\rightharpoonup\mathcal{I}_{h}(u)$ since $\mathcal{I}_{h}$ is a bounded operator, therefore, we need only to identify the weak limit of $\mathcal{I}_{b^{g}}(u^{g})$.  To this end, it suffices to identify the distributional limit of each piece of the right side in the formula \eqref{e-2EUS}.  First note that for any $\psi\in L^{2}\big([0,T); \mathcal{D}_{x,v}\big)$
\begin{align*}
\int^{T}_0\int_{\mathbb{R}^{d}}\int_{\mathbb{R}^{d-1}}\langle \cdot \rangle^{2s}(-\Delta_{v})^{s}_{g}w_{\mathcal{J}}^{g}\,&\psi \dv \dx \dt= \int^{T}_0\int_{\mathbb{R}^{d}}\int_{\mathbb{R}^{d-1}}w_{\mathcal{J}}^{g}\,(-\Delta_{v})^{s}_{g}\phi\dv \dx \dt \\
&\hspace{-2.2cm}=\int^{T}_0\int_{\mathbb{R}^{d}}\int_{\mathbb{R}^{d-1}}w_{\mathcal{J}}^{g}\,(-\Delta_{v})^{s}\phi\dv \dx \dt\\
& \hspace{-.5cm}+ \int^{T}_{0}\int_{\mathbb{R}^{d}}\int_{\mathbb{R}^{d-1}}w_{\mathcal{J}}^{g}\,\big((-\Delta_{v})^{s}_{g} - (-\Delta_{v})^{s}\big)\phi\dv \dx \dt\,,
\end{align*}
where $\phi=\langle \cdot \rangle^{2s}\,\psi$.  Since $\langle\cdot\rangle^{2s}(-\Delta_{v})^{s}\phi \in L^{2}\big(L^{2}_{x,v}\big)$ it follows that 
\begin{equation*}
\int^{T}_{0}\int_{\mathbb{R}^{d}}\int_{\mathbb{R}^{d-1}}w_{\mathcal{J}}^{g}\,(-\Delta_{v})^{s}\phi\dv \dx \dt \rightarrow \int^{T}_{0}\int_{\mathbb{R}^{d}}\int_{\mathbb{R}^{d-1}}w_{\mathcal{J}}\,(-\Delta_{v})^{s}\phi\dv \dx \dt\,.
\end{equation*}
Furthermore, using H\"{older}'s inequality, Lemma \ref{app:Dapproxconv}, and Lebesgue dominated convergence, 
\begin{align*}
\Big|\int^{T}_{0}\int_{\mathbb{R}^{d}}\int_{\mathbb{R}^{d-1}}w_{\mathcal{J}}^{g} \,&\big((-\Delta_{v})^{s}_{g} - (-\Delta_{v})^{s}\big)\phi\dv \dx \dt \Big| \\
&\leq \big\|u^{g}\big\|_{L^{2}(L^{2}_{x,\theta})}\big\|\langle\cdot\rangle^{2s}\big((-\Delta_{v})^{s}_{g} - (-\Delta_{v})^{s}\big)\phi\big\|_{L^{2}(L^{2}_{x,v})}\\
&\hspace{2cm}\leq C\big\|\langle\cdot\rangle^{2s}\big((-\Delta_{v})^{s}_{g} - (-\Delta_{v})^{s}\big)\phi\big\|_{L^{2}(L^{2}_{x,v})}\longrightarrow 0\,.
\end{align*}
In this way
\begin{equation}\label{e22.5EUS}
\langle \cdot \rangle^{2s}\, (-\Delta_{v})^{s}_{g}w^{g}_{\mathcal{J}} \rightarrow \langle \cdot \rangle^{2s}\, (-\Delta_{v})^{s}w_{\mathcal{J}}\quad\text{in}\quad L^{2}\big([0,T); \mathcal{D}'_{x,v}\big)\,.
\end{equation}
The distributional limit of the second term in formula \eqref{e-2EUS} follows the same rationale
\begin{equation}\label{e22.51EUS}
\langle \cdot \rangle^{2s}u^{g}_{\mathcal{J}}(-\Delta_{v})^{s}_{g}\frac{1}{\langle \cdot \rangle^{d-1-2s}}\rightarrow \langle \cdot \rangle^{2s}u_{\mathcal{J}}(-\Delta_{v})^{s}\frac{1}{\langle \cdot \rangle^{d-1-2s}}\quad\text{in}\quad L^{2}\big([0,T); \mathcal{D}'_{x,v}\big)\,,
\end{equation}
and, as a consequence of \eqref{e22.5EUS} and \eqref{e22.51EUS}
\begin{equation*}
\frac{\big[\mathcal{I}_{b^{g}}(u^{g})\big]_{\mathcal{J}}}{\langle\cdot\rangle^{d-1}}\rightarrow -\langle \cdot \rangle^{2s}\, (-\Delta_{v})^{s}w_{\mathcal{J}} +  \langle \cdot \rangle^{2s}u_{\mathcal{J}}(-\Delta_{v})^{s}\frac{1}{\langle \cdot \rangle^{d-1-2s}} \quad \text{in}\quad L^{2}\big([0,T); \mathcal{D}'_{x,v}\big)\,.
\end{equation*}
This readily implies that
\begin{equation*}
\frac{\big[\mathcal{I}_{b^{g}_{s}}(u^{g})\big]_{\mathcal{J}}}{\langle\cdot\rangle^{d-1}} = \frac{\big[\mathcal{I}_{b^{g}}(u^{g})\big]_{\mathcal{J}}}{\langle\cdot\rangle^{d-1}}+\frac{\big[\mathcal{I}_{h}(u^{g})\big]_{\mathcal{J}}}{\langle\cdot\rangle^{d-1}}\rightarrow\frac{\big[\mathcal{I}_{b_{s}}(u)\big]_{\mathcal{J}}}{\langle\cdot\rangle^{d-1}}\quad \text{in} \quad L^{2}\big([0,T); \mathcal{D}'_{x,v}\big)\,.
\end{equation*}
But, it is known that
\begin{equation*}
\frac{\big[\mathcal{I}_{b^{g}_{s}}(u^{g})\big]_{\mathcal{J}}}{\langle\cdot\rangle^{d-1}} \rightarrow\frac{\big[\mathcal{I}\big]_{\mathcal{J}}}{\langle\cdot\rangle^{d-1}}\quad \text{weakly in} \quad L^{2}\big([0,T); L^{2}_{x,v}\big)\,,
\end{equation*}
thus, due to uniqueness of distributional limits $\mathcal{I}=\mathcal{I}_{b_s}(u)$.  Now, take the weak limit in $L^{2}\big([0,T); L^{2}_{x,\theta}\big)$ in the equation for $u^{g}$
\begin{align}\label{e23.111EUS}
\partial_{t}u^{g} + \theta\cdot\nabla_{x}u^{g} = \mathcal{I}_{b^{g}_{s}}(u^{g})\rightarrow \partial_{t}u + \theta\cdot\nabla_{x}u = \mathcal{I}_{b_{s}}(u)\,,
\end{align}
and conclude that $u$ satisfies the radiative transfer equation in the peaked regime \eqref{DWS}.  Estimates \eqref{e21EUS} and \eqref{e22EUS} are an easy consequence of \eqref{e1EUS} and \eqref{e1.1EUS} and the fact that the weak limit does not increases the $L^{\infty}(L^{2}_{x,\theta})$-norm.  In particular, the estimate of the fractional Laplacian follows by noticing that
\begin{equation*}
-\langle \cdot \rangle^{2s}\, (-\Delta_{v})^{s}w_{\mathcal{J}} = \frac{\big[\mathcal{I}_{b_s}(u)\big]_{\mathcal{J}}}{\langle\cdot\rangle^{d-1}} - \frac{\big[\mathcal{I}_{h}(u)\big]_{\mathcal{J}}}{\langle\cdot\rangle^{d-1}} - \langle \cdot \rangle^{2s}u_{\mathcal{J}}(-\Delta_{v})^{s}\frac{1}{\langle \cdot \rangle^{d-1-2s}}\,.
\end{equation*}
Therefore, for a.e $t\in[0,T)$ it holds
\begin{align*}
\|\langle \cdot \rangle^{2s}\, (-\Delta_{v})^{s}w_{\mathcal{J}}(t)&\|_{L^{2}_{x,v}} \leq \|\mathcal{I}_{b_s}(u(t))\|_{L^{2}_{x,\theta}} +\\
&+\|\mathcal{I}_{h}(u(t))\|_{L^{2}_{x,\theta}} + C_o\|u(t)\|_{L^{2}_{x,\theta}}\leq C\|[u_o]_{\mathcal{J}}\|_{\mathcal{C}^{2}_{x,v}}\,,
\end{align*}
for some constant $C:=C(supp(u_o))$.  Additionally, the convergence of the time derivative implies that $u\in \mathcal{C}\big([0,T);L^{2}_{x,\theta}\big)$ and, as a consequence, we must have $u(0)=u_o$.  Finally, the fact that the whole approximating sequence $\{u^{g}\}$ converges weakly to $u$ follows by the uniqueness of solutions of the limiting problem.
\end{proof}
\begin{Theorem}\label{t2EUS}
(Existence of solutions for general initial configuration and regularity) Let $u_o\in L^{1}_{x,\theta}$ be a nonnegative state and consider a scattering kernel \eqref{e0.1EUS} with $h\in L^{1}_{\theta}$.  Then, the radiative transport equation in the forward peaked regime has a unique solution $0\leq u\in \mathcal{C}([0,T);L^{1}_{loc})\cap L^{\infty}([0,T);L^{1}_{x,\theta})$ with initial state $u_0$ and satisfying conservation of mass $\int u(t) = \int u_0$ for all $t\geq0$.  Moreover, the solution is such that $(-\Delta_{x})^{k_{1}}\partial^{k_{2}}_{t}u \in L^{\infty}\big((0,T);L^{2}_{x,\theta}\big)$ (for any $k_{1},\,k_{2}\in\N$), $(-\Delta_{v})^{s/2}w_{\mathcal{J}}\in L^{\infty}\big((0,T);L^{2}_{x,v}\big)$ and $(-\Delta_{v})^{s}w_{\mathcal{J}}\in L^{2}\big((0,T);L^{2}_{x,v}\big)$ with estimates for any $t_o>0$
\begin{subequations}
\begin{align}
\|(-\Delta_{x})^{k_{1}}\partial^{k_2}_{t}u\|^{2}_{L^{\infty}((t_o,T);L^{2}_{x,\theta})} &\leq C_{k_1,k_2}\big(t^{-}_o,\|u_o\|_{L^{1}_{x,\theta}}\big)\quad \forall\;k_{1},\,k_2\in\mathbb{N}\,,\label{e3.3EUS}\\
\|(-\Delta_{v})^{s/2}w_{\mathcal{J}}\|^{2}_{L^{\infty}((t_o,T);L^{2}_{x,v})}&\leq C\big(t^{-}_o,\|u_o\|_{L^{1}_{x,\theta}}\big)\,,\label{e3.2EUS}
\\
\|\langle \cdot \rangle^{2s}(-\Delta_{v})^{s}w_{\mathcal{J}}\|^{2}_{L^{2}((t_o,T);L^{2}_{x,v})}&\leq C\big(t^{-}_o,\|u_o\|_{L^{1}_{x,\theta}}\big)\,t\,,\label{e23EUS-1}
\end{align}
\end{subequations}
where $t^{-}_o\in(0,t_o)$.  Furthermore, if $h \in \mathcal{C}^{N_o}\big([-,1,1]\big)$, then it follows that
\begin{equation}\label{e25.0EUS}
\big\|\langle \cdot \rangle^{2s} (-\Delta_{v})^{\frac{k+s}{2}} w_{\mathcal{J}}\big\|_{L^{2}((t_o,T);L^{2}_{x,v})} \leq C_{k}\big(t^{-}_o, \|u_o\|_{L^{1}_{x,\theta}}\big)\,t\,,\quad 0\leq k\leq N_0\,.
\end{equation}

\end{Theorem}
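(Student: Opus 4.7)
The plan is to construct $u$ as a limit of the smooth approximants provided by Proposition \ref{p1EUS}, using the a priori estimates of Section 3 to propagate regularity from the $L^{2}$ regime to the $L^{1}$ regime. Every constant below will be controlled solely in terms of the conserved mass $m_o := \|u_o\|_{L^{1}_{x,\theta}}$, the kernel $b$, and a positive distance from the initial time.

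First, I would mollify and truncate $u_o$ to obtain nonnegative initial data $u_o^n$ with $[u_o^n]_{\mathcal{J}} \in \mathcal{C}^{2}_{x,v}$ compactly supported, $\|u_o^n\|_{L^{1}_{x,\theta}} \le m_o$, and $u_o^n \to u_o$ strongly in $L^{1}_{x,\theta}$. Proposition \ref{p1EUS} produces a unique nonnegative $u^n \in \mathcal{C}([0,T); L^{2}_{x,\theta})$ solving the RTE in the forward-peaked regime with initial state $u_o^n$ and satisfying mass conservation $\|u^n(t)\|_{L^{1}_{x,\theta}} = \|u_o^n\|_{L^{1}_{x,\theta}} \le m_o$. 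Proposition \ref{prop:L-1-L-2} applied to each $u^n$ then yields
\[
   \|u^n(t)\|_{L^{2}_{x,\theta}} \le c_{3}(m_o)\, t^{-1/(\omega-1)}, \qquad t \in (0, \min\{T,1\}],
\]
and a Gronwall argument based on the energy inequality \eqref{MEE2} extends this to the uniform bound $\|u^n\|_{L^{\infty}([t_o,T); L^{2}_{x,\theta})} \le C(t_o, m_o, T)$ for every $t_o > 0$.

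Second, fix any $t_o > 0$, set $t_o^{-} = t_o/2$, and note that $u^n(t_o^{-})$ is bounded in $L^{2}_{x,\theta}$ uniformly in $n$. Applying Lemma \ref{prop:basic-L-2}, Proposition \ref{prop:infty-reg}, and Proposition \ref{prop:u-time-reg} to $u^n$ on $[t_o^{-}, T)$ then produces uniform-in-$n$ versions of \eqref{e3.3EUS}, \eqref{e3.2EUS}, \eqref{e23EUS-1}, and --- under the $\mathcal{C}^{N_0}$-hypothesis on $h$ --- of \eqref{e25.0EUS}, with constants depending only on $t_o^{-}$, $m_o$, $d$, $s$, and $b$. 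An Aubin--Lions compactness argument on compact subsets of $(0,T)\times\mathbb{R}^{d}\times\mathbb{S}^{d-1}$ extracts a subsequence $u^{n_k} \to u$ strongly in $L^{2}_{loc}$, with weak-$\ast$ convergence of the derivatives controlled by the estimates. The identification of the weak limit of $\mathcal{I}_{b_s}(u^{n_k})$ with $\mathcal{I}_{b_s}(u)$ proceeds exactly as in the proof of Proposition \ref{p1EUS}; passing to the limit in the equation on every interval $[t_o, T)$ gives that $u$ solves \eqref{DWS}, and lower semicontinuity transfers each uniform bound to $u$. Strong $L^{1}_{loc}$-continuity at $t=0$ and mass conservation follow from the same properties of the $u^n$ together with $u_o^n \to u_o$ in $L^{1}_{x,\theta}$ via a standard $3\varepsilon$-argument.

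For uniqueness within the class \eqref{DWS}, given two solutions $u_1, u_2$ with the same initial datum, the difference $v = u_1 - u_2$ satisfies the same linear transport equation. The Kato-type inequality available for the symmetric jump operator $\mathcal{I}_{b_s}$ --- namely $\mathrm{sgn}(v)\,\mathcal{I}_{b_s}(v) \le \mathcal{I}_{b_s}(|v|)$ in the distributional sense --- gives $\partial_t |v| + \theta\cdot\nabla_x |v| \le \mathcal{I}_{b_s}(|v|)$; integrating in $(x,\theta)$ and using $\int \mathcal{I}_{b_s}(|v|)\,\dtheta\,\dx = 0$ yields $\|u_1(t) - u_2(t)\|_{L^{1}_{x,\theta}} \le \|u_1(0) - u_2(0)\|_{L^{1}_{x,\theta}} = 0$. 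The principal obstacle in the whole argument is keeping every constant in \eqref{e3.3EUS}--\eqref{e25.0EUS} independent of $\|u_o^n\|_{L^{2}_{x,\theta}}$, which can blow up as $n \to \infty$; this is exactly the role played by the $L^{1}\to L^{2}$ smoothing of Proposition \ref{prop:L-1-L-2} combined with the translation-in-time step at $t_o^{-}$, which decouples the regularity bootstrap from the $L^{2}$-size of the initial data.
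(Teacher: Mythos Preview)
Your proposal is correct and follows essentially the same strategy as the paper: approximate $u_o$ by smooth compactly supported data, invoke Proposition~\ref{p1EUS} for the approximants, use Proposition~\ref{prop:L-1-L-2} to obtain mass-only $L^2$ control, bootstrap via Lemma~\ref{prop:basic-L-2} and Propositions~\ref{prop:infty-reg}--\ref{prop:u-time-reg}, and finish uniqueness with the Kato-type inequality for $\mathcal{I}_{b_s}$. The one substantive difference is the compactness mechanism. The paper does not use Aubin--Lions; instead it exploits the $L^1$-contraction $\|u^n(t)-u^m(t)\|_{L^1(B_R\times\mathbb{S}^{d-1})}\le\|u^n_o-u^m_o\|_{L^1(B_R\times\mathbb{S}^{d-1})}+\text{(boundary flux)}$, obtained by multiplying the difference equation by $\mathrm{sgn}(u^n-u^m)$ and integrating over $B_R\times\mathbb{S}^{d-1}$, then sending $R\to\infty$. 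This shows $\{u^n\}$ is Cauchy in $\mathcal{C}([0,T);L^1_{loc})$ globally in time, so the limit, the attachment of $u_o$ at $t=0$, and convergence of the full sequence all come for free. Your Aubin--Lions argument only yields compactness on $[t_o,T)$ for $t_o>0$, so the $3\varepsilon$-step you invoke at $t=0$ would in practice need exactly this contraction estimate anyway; the paper's route is therefore a bit more economical. A minor point you glossed over: to apply Proposition~\ref{prop:L-1-L-2} one needs $u^n\in H^s_{x,\theta}$ so that the Sobolev inequality \eqref{SE} is available; the paper checks this by noting $u_o^n\in L^{p_s}_{x,\theta}$ forces $u^n\in L^\infty(L^{p_s}_{x,\theta})$.
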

\begin{proof}
Let $\{u^j_o\}^{\infty}_{j=1}$ be a sequence of nonnegative initial states such that $\{[u^{j}_o]_{\mathcal{J}}\}\subset\mathcal{C}^{2}_{x,v}$ with compact support converging \textit{strongly} to $u_o\in L^{1}_{x,\theta}$.  By Proposition \ref{p1EUS} such sequence produces a sequence $\{u^{j}(t)\}^{\infty}_{j=1}$ of solutions to the RTE in the peaked regime  satisfying the estimates stated there.  These solutions belong to $\mathcal{C}\big([0,T);L^{2}_{x,\theta}\big)$. In particular, they belong to $\mathcal{C}\big([0,T);L^{1}_{loc}\big)$.  We can subtract the equations for $u^{j}(t)$ and $u^{l}(t)$, multiply the resulting equation by $\text{sign}(u^{j}(t) - u^{l}(t))$, and integrate in $[s,t]\times B_{R}\times\mathbb{S}^{d-1}$ ($B_{R}$ is the open ball centered at zero and radius $R>0$). Using the contraction property of the scattering operator
\begin{equation*}
\int_{\mathbb{S}^{d-1}}\mathcal{I}(u^{j}(t) - u^{l}(t))\,\text{sign}(u^{j}(t) - u^{l}(t))\,\dtheta\leq 0
\end{equation*}
we conclude that
\begin{align}\label{cauchy0.5EUS}
\|u^j(t) - u^l(t)\|_{L^{1}(B_{R}\times\mathbb{S}^{d-1})} &\leq \|u^j(s) - u^l(s)\|_{L^{1}(B_{R}\times\mathbb{S}^{d-1})}\nonumber\\
& + \int^{t}_{s}\int_{\partial B_{R}}\int_{\mathbb{S}^{d-1}}\big|u^j(t') - u^l(t')\big|\,(\theta\cdot\hat{x})\,\dtheta\text{d}\hat{x}\dt'\,.
\end{align}
Observe that the integral in the right side of this inequality is well defined by the spatial regularity of the sequence $\{u^{j}(t)\}^{\infty}_{j=1}$ (thus, the integral on $\partial B_{R}$ make sense), furthermore, it holds for any $0\leq s\leq t$ due to time continuity in $L^{1}(B_{R}\times\mathbb{S}^{d-1})$.  In particular, evaluating at $s=0$ and then sending $R\rightarrow\infty$
\begin{equation}\label{cauchyEUS}
\sup_{t\geq0}\|u^j(t) - u^l(t)\|_{L^{1}_{x,\theta}} \leq \|u^j_o - u^l_o\|_{L^{1}_{x,\theta}}\,,
\end{equation}
where we used that the integral term in the right side of \eqref{cauchy0.5EUS} belongs to $L^{1}(0,\infty)$ as a function of $R$ (for any fixed times $s$ and $t$).  Thus, the sequence $\{u^j\}^{\infty}_{j=1}$ is Cauchy in $\mathcal{C}([0,T);L^{1}_{loc})$, and therefore, it converges strongly to a limit $u\in \mathcal{C}([0, T); L^{1}_{loc})\cap L^{\infty}([0,T);L^{1}_{x,\theta})$ with $u(0)=u_o$.\\

\noindent
Note that each $u^{j}\in L^{\infty}(L^{p_{s}}_{x,\theta})$ since each $u^{j}_{o}\in L^{p_s}_{x,\theta}$, therefore, $u^{j}\in L^{\infty}(H^{s}_{x,\theta})$ for any $j\in\N$.  In this way, Sobolev inequality \eqref{SE} is valid for such sequence, hence, the \textit{a priori} estimate of Proposition \ref{prop:L-1-L-2}.  As a consequence, it follows from Propositions \ref{prop:L-1-L-2} and \ref{prop:u-time-reg} that
\begin{equation*}
\int^{T}_{t_o}\|(-\Delta_{x})^{k_1}\partial^{k_2}_{t}u^{j}(t')\|^{2}_{L^{2}_{x,\theta}}\,\dt' \leq C_{k_1,k_2}(t^{-}_o,\|u^{j}_o\|_{L^{1}_{x,\theta}})\,T\,, \quad \forall\;k_{1},\,k_{2}\in\mathbb{N}\,.
\end{equation*}
Applying the operator $(-\Delta_{x})^{k_{1}}\partial^{k_2}_{t}$, with $k_{1},\,k_{2}\in\N$,  to the RTE, multiplying the result by $(-\Delta_{x})^{k_{1}}\partial^{k_2}_{t}u$ and integrating in space and angle it follows that
\begin{equation*}
\|(-\Delta_{x})^{k_1}\partial^{k_2}_{t}u^{j}(t)\|^{2}_{L^{2}_{x,\theta}}\leq \|(-\Delta_{x})^{k_1}\partial^{k_2}_{t}u^{j}(s)\|^{2}_{L^{2}_{x,\theta}}\,,\quad 0< t_o <s<t<T\;\;(\text{a.e in}\; s,\,t)\,.
\end{equation*}
Thus, estimate \eqref{e3.3EUS} follows after averaging in $s\in(t_o,2t_o)$ and then using the propagation property of the $L^{2}$-norms of spatial and time derivatives.  Furthermore, from Lemma \ref{prop:basic-L-2} it is concluded that
\begin{equation*}
\big\|(-\Delta_{v})^{s/2}w_{\mathcal{J}}^{j} \big\|_{L^{2}((t_o,T)\times \mathbb{R}^{d}\times\mathbb{R}^{d-1})} \leq C(t^{-}_o)\big\| u^{j} \big\|_{L^{2}((t^{-}_o,T)\times \mathbb{R}^{d}\times\mathbb{S}^{d-1})}\leq C(t^{-}_o,\|u^j_o\|_{L^{1}_{x,\theta}})\,.
\end{equation*}
Thus, multiplying the projected RTE by $(-\Delta_{v})^{s}w_{\mathcal{J}}^{j}$ and integrating in $x$ and $v$ it readily follows that for a.e $t\in(t_o, T)$
\begin{align}\label{e3.0EUS}
\frac{\text{d}}{\dt}\langle w^{j}_{J}(t),\,&(-\Delta_{v})^{s}w^{j}_{J}(t) \rangle
\nonumber\\
&\leq -\tfrac{1}{2}\|\langle \cdot \rangle^{2s}(-\Delta_{v})^{s}w^{j}_{\mathcal{J}}(t)\|^{2}_{L^{2}_{x,v}}+C^{2}\big(\|u^{j}(t_o)\|^{2}_{L^{2}_{x,\theta}} + \|\nabla_{x}u^{j}(t_o)\|^{2}_{L^{2}_{x,\theta}}\big) \nonumber\\
&\hspace{1cm}\leq -\tfrac{1}{2}\|\langle \cdot \rangle^{2s}(-\Delta_{v})^{s}w^{j}_{\mathcal{J}}(t)\|^{2}_{L^{2}_{x,v}} + C(t^{-}_o,\|u^j_o\|_{L^{1}_{x,\theta}})\,.
\end{align}
Integrating \eqref{e3.0EUS} in $t\in(s,T)$ (for a.e $s$) one has in the one hand
\begin{equation*}
\tfrac{1}{2}\int^{T}_{s}\|\langle \cdot \rangle^{2s}(-\Delta_{v})^{s}w^{j}_{\mathcal{J}}(\tau)\|^{2}_{L^{2}_{x,v}}\dtau \leq \|(-\Delta_{v})^{s/2}w^{j}_{\mathcal{J}}(s)\|^{2}_{L^{2}_{x,v}} + C(t^{-}_o,\|u^j_o\|_{L^{1}_{x,\theta}})\,T\,.
\end{equation*}
As a consequence, estimate \eqref{e23EUS-1} is proved after averaging in $s\in(t_0, T)$ and sending $j\rightarrow\infty$.  In the other hand, estimate \eqref{e3.0EUS} also implies that for $0<t^{-}_o < s\leq t<T$ (a.e in $s$ and $t$)
\begin{equation*}
\|(-\Delta_{v})^{s/2}w^{j}_{\mathcal{J}}(t)\|^{2}_{L^{2}_{x,v}} \leq \|(-\Delta_{v})^{s/2}w^{j}_{\mathcal{J}}(s)\|^{2}_{L^{2}_{x,v}} + C(t^{-}_o,\|u^j_o\|_{L^{1}_{x,\theta}})\,.
\end{equation*}
Therefore, estimate \eqref{e3.2EUS} follows after averaging in $s\in(t_o,2t_o)$ and sending $j\rightarrow\infty$.  Having these estimates at hand one can pass in the weak-$L^{2}\big([t_o,T);L^{2}_{x,\theta}\big)$ limit, for any $t_o>0$, and obtain
\begin{equation*}
\partial_{t}u^{j} + \theta\cdot\nabla_{x}u^{j} = \mathcal{I}_{b_s}(u^{j})\rightarrow \partial_{t}u + \theta\cdot\nabla_{x}u = \mathcal{I}_{b_{s}}(u)\,.
\end{equation*}
Therefore, the limiting function $u$ solves the RTE in the peaked regime with initial condition $u(0)=u_o$ which conserves the mass (recall the the sequence $\{u^{j}(t)\}^{\infty}_{j=1}$ is converging strongly in $L^{\infty}(L^{1}_{x,\theta})$)
\begin{equation*}
  \iint_{\R^d \times \Ss^{d-1}} u(t) 
= \lim_{j\rightarrow\infty} \iint_{\R^d \times \Ss^{d-1}} u^{j}(t)
= \lim_{j\rightarrow\infty} \iint_{\R^d \times \Ss^{d-1}} u^{j}_o 
= \iint_{\R^d \times \Ss^{d-1}} u_o\,,\quad t\geq 0\,.
\end{equation*}
Hence $u$ fulfills all the requirements to be a solution.  Additionally, observe that estimate \eqref{e25.0EUS} is a direct consequence of Proposition \ref{prop:infty-reg}.\\
 
\noindent
Regarding uniqueness, let $v(t)$ be any other solution having initial state $u_o$, therefore, it is the case that
\begin{align*}
\|u(t) - v(t)\|_{L^{1}(B_{R}\times\mathbb{S}^{d-1})}& \leq \|u(s) - v(s)\|_{L^{1}(B_{R}\times\mathbb{S}^{d-1})}\\
&\hspace{-1cm} + \int^{t}_{s}\int_{\partial B_{R}}\int_{\mathbb{S}^{d-1}}\big|u(t') - v(t')\big|\,(\theta\cdot\hat{x})\,\dtheta\text{d}\hat{x}\dt'\quad 0<s\leq t\,.
\end{align*}
Therefore, sending first $s\rightarrow0$ and then $R\rightarrow\infty$ it follows that $\|u(t)-v(t)\|_{L^{1}_{x,\theta}}=0$ for a.e $t>0$.
\end{proof}

\section{Decay Estimate}
In this section we borrow the framework used in \cite{CafVas} to show that the solution to the RTE in the peaked regime is bounded for any positive time and its $L^\infty$-norm decays to zero algebraically in time.  The precise statement is give in the following proposition.
\begin{Proposition} \label{prop:u-decay}
Suppose $u$ is a weak solution to the transport equation~\eqref{rte1}. Then there exists a constant $c_9$ which only depends on $d,s, b,$ and $m_o$, the mass of $u_{o}$, such that
\begin{align*}
     u(T, x, \theta) 
\leq 
    c_9 \, T^{-\frac{\mu_1-\mu_2}{1-\mu_2}}
\qquad \text{for all \,\, $T > 1$} \,,
\end{align*}
where the constants $\mu_1, \mu_2$ are defined in~\eqref{def:mu-1-2}. In particular, we have
\begin{align*}
     u(T, x, \theta) 
\leq 
    c_9 \, T^{-\frac{1}{2}}
\qquad \text{as \,\, $T \to \infty$} \,.
\end{align*}
\end{Proposition}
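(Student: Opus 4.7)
The plan is to adapt the De Giorgi level-set iteration of \cite{CafVas} to the kinetic setting of the RTE. For a cutoff level $K \geq 0$, set $u_K := (u-K)_+$. Because $u$ solves~\eqref{rte1} and the scattering operator $\CalL_{b_s} = -D(-\Delta_\theta)^s + c_{s,d}\mathbf{1} + \CalL_h$ satisfies the Gagliardo representation~\eqref{equivnorm}, the algebraic identity $(a-b)(a_+ - b_+) \geq (u_K(\theta')-u_K(\theta))^2$ applied to the nonlocal kernel will yield a truncated energy inequality of the form
\[
   \tfrac{1}{2}\tfrac{\rm d}{\dt}\norm{u_K}_{L^2_{x,\theta}}^2
 + D_0 \int_{\R^d} \norm{u_K}_{H^s_\theta}^2 \dx
 \leq
   D_1 \norm{u_K}_{L^2_{x,\theta}}^2,
\]
\emph{provided} $K \geq$ some threshold controlling the contribution of the bounded part $\CalL_h$ and of the Bessel-potential remainder in~\eqref{main:smooth1}. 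Note that $u_K$ also solves a transport equation with a source supported on $\{u > K\}$, so the averaging lemma (Proposition~\ref{Lemma:VA-Weak}) applies to $\rho_K := \int u_K\,\dtheta$, and the interpolation step of Proposition~\ref{prop:interpolation} upgrades this to an $L^{2\omega}$ bound on $u_K$ in space--time--angle, with constants depending only on $m_o$.

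Next I would run the iteration. Fix $T>1$ and pick truncation levels $K_k = M(1-2^{-k})$ and time levels $T_k = T(1-2^{-k-1})$ with $M$ to be chosen. Setting
\[
    A_k := \sup_{t\in[T_k,T]} \norm{u_{K_k}(t)}_{L^2_{x,\theta}}^2
             + \int_{T_k}^{T}\!\!\!\int_{\R^d} \norm{u_{K_k}}_{H^s_\theta}^2\,\dx\,\dtau,
\]
averaging the truncated energy estimate in time over $[T_{k-1},T_k]$ and combining with the $L^{2\omega}$ bound (using $|\{u>K_k\}|\leq (K_k-K_{k-1})^{-p}\int u_{K_{k-1}}^p$ with $p=2\omega$) gives the standard super-linear recurrence
\[
    A_{k+1} \leq C_0^k\, M^{-\gamma} A_k^{1+\gamma},
\qquad \gamma=\omega-1>0.
\]
The classical De Giorgi lemma yields $A_k \to 0$ as soon as $A_0 \leq c_0\, M^{1+1/\gamma}$. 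Choosing $M$ to saturate this condition produces
\[
    \norm{u(T)}_{L^\infty_{x,\theta}} \leq M \leq c_1\, A_0^{\mu_1}
\]
with $\mu_1 = \gamma/(1+\gamma)$, where $A_0$ is bounded via Proposition~\ref{prop:infty-reg} and~\eqref{bound:frac-u-1} by a constant times $\norm{u(T/2)}_{L^2_{x,\theta}}^2$.

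Finally, the time-decay rate is obtained by a bootstrap using conservation of mass. Interpolating $L^2$ between $L^1$ (with norm $m_o$) and $L^\infty$ gives $\norm{u(T/2)}_{L^2_{x,\theta}}^2 \leq m_o \norm{u(T/2)}_{L^\infty_{x,\theta}}$, so the previous display can be rewritten as
\[
    \norm{u(T)}_{L^\infty_{x,\theta}}
      \leq c_2\, T^{-\mu_1 a}\,\norm{u(T/2)}_{L^\infty_{x,\theta}}^{\mu_2},
\]
where the explicit factor $T^{-\mu_1 a}$ comes from the smoothing estimate $\norm{u(T/2)}_{L^2}\leq c_3(T/2)^{-1/(\omega-1)}$ of Proposition~\ref{prop:L-1-L-2} used to bound the part of $A_0$ controlled purely by regularization, while $\mu_2<1$ is the exponent of the part controlled by mass interpolation. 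A standard dyadic iteration of such a self-improving inequality with contraction exponent $\mu_2<1$ yields the algebraic bound $\norm{u(T)}_{L^\infty}\leq c_9\,T^{-(\mu_1-\mu_2)/(1-\mu_2)}$, and tracking the explicit values of $\omega$ and $\beta$ from Section~4 gives the stated asymptotic $T^{-1/2}$.

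The main obstacle is the first step: justifying the truncated energy inequality for the fractional Laplace--Beltrami operator. Unlike the local case, $(-\Delta_\theta)^s$ does not commute with the positive-part truncation, so one must work directly with the Gagliardo representation~\eqref{smooth10.1} and handle the lower-order contributions coming from $\CalL_h$ and from the weight $\La v\Ra^{d-1+2s}$ in~\eqref{main:smooth1}, which force the threshold on $K$ to depend on the $L^\infty$ norm of the angular kernel correction. Once this uniform-in-$K$ coercivity is in place, the rest of the argument is essentially the abstract De Giorgi machinery combined with the regularity inputs already established in Sections~2--4.
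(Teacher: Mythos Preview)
Your overall architecture (truncated energy inequality, averaging lemma for $\rho_K$, De Giorgi recurrence, mass-interpolation bootstrap) matches the paper's, but there is a genuine gap at the velocity-averaging step. The equation satisfied by $u_K$ has source $\One_{\{u>K\}}\,\CalL(u)$, and to invoke Proposition~\ref{Lemma:VA-Weak} you need this source in $L^2_{t,x,v}$ with a bound that contracts along the iteration, i.e.\ controlled by $A_{k-1}$, not by some fixed norm of $u$. You assert this without justification. The paper closes this by first proving an $L^\infty$ bound on $\langle v\rangle^{-(d-1)}[\CalL(u)]_{\CalJ}$ (Proposition~\ref{prop:L-infty-I}, which uses the full regularity machinery of Section~4 and requires $h\in\mathcal{C}^{N_0}$ with $N_0\geq d+2$), then introducing an angular cutoff $\eta_N$ (Proposition~\ref{prop:VA-N}) to absorb the unbounded weight $\langle v\rangle^{d-1}$, and finally \emph{optimizing over} $N$ inside the recurrence. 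The resulting inequality is $U_k \leq C\,2^{ck}\,(T_0-t_0)^{-1}M^{-\nu_1}U_{k-1}^{1+(\tilde r/2-1)\nu_0}$ with the extra exponents $\nu_0,\nu_1$ produced by this optimization; without the $N$-argument you will not obtain a closed nonlinear recurrence at all.

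Two smaller points. First, your worry about a threshold on $K$ for the truncated energy inequality is unfounded: the paper shows directly, via the Gagliardo representation and the pointwise inequality $(a-b)\big((a-\lambda)_+ -(b-\lambda)_+\big)\geq\big((a-\lambda)_+-(b-\lambda)_+\big)^2$, that~\eqref{eq:energy-level} holds for \emph{every} $\lambda>0$; the $L^2$-bounded part $\CalL_h$ is handled exactly as in~\eqref{MEE2}. Second, your explanation of the time factor is off: Proposition~\ref{prop:L-1-L-2} is a short-time smoothing estimate valid only on $(0,T_1)$ with $T_1\leq 1$ and gives no large-$T$ decay. In the paper the power $T^{-\mu_1}$ comes from the length of the time window in the De Giorgi averaging step (the factor $2^{k}/(T_0-t_0)$), and the final bootstrap is algebraic: one inserts $U_0\leq C\,T\,m_o\,M$ into $M=C_1 T^{-\mu_1}U_0^{\mu_2}$ and solves, rather than iterating a dyadic self-improving inequality.
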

\noindent
Before proving Proposition~\ref{prop:u-decay}, we need to establish some auxiliary bounds for $u$. To this end, define the level set functions
\begin{equation} \label{def:u-w-lambda}
      u_\lambda := (u - \lambda)_+ \,, 
\qquad
     \WJL := \frac{(\UJ - \lambda)_+}{\vint{v}^{d-1-2s}}
\qquad
\text{for any $\lambda > 0$} \,.
\end{equation}
\begin{Proposition}
Let $u \in L^2((t_0, t_1) \times \R^d \times \Ss^{d-1})$ be a weak solution to~\eqref{rte1} for $0 < t_0 < t_1 < \infty$. Then for any $\lambda > 0$, we have
\begin{equation}\label{eq:energy-level}
    \del_t \int_{\R^d}\int_{\Ss^{d-1}} u_\lambda^2 \dtheta\dx
    + D_0 \int_{\R^d}\int_{\R^{d-1}}\big|(-\Delta_v)^{s/2} \WJL\big|^{2} \dv\dx
\leq 
    D_1 \int_{\R^d} \int_{\Ss^{d-1}} u_\lambda^2 \dtheta \dx \,,
\end{equation}
where $D_0, D_1$ are the constants in~\eqref{MEE2}.
\end{Proposition}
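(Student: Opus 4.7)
The approach is to reproduce the derivation of the basic energy identity~\eqref{MEE1}--\eqref{MEE2} with $u$ replaced by the truncation $u_\lambda = (u-\lambda)_+$, exploiting the convex, $1$-Lipschitz, non-decreasing nature of the map $\Phi(x) = (x-\lambda)_+$ to control the scattering dissipation. Concretely I will use $u_\lambda$ itself as a test function in the equation and transfer coercivity from $u$ to $u_\lambda$ via a pointwise Stampacchia-type inequality on the jumps of $u$.

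The key steps are as follows. First, multiply \eqref{DWS} by $u_\lambda$ and integrate in $(x,\theta)$. Because $u_\lambda \, \del_t u = \tfrac12 \del_t u_\lambda^2$ and $u_\lambda \, \theta\cdot\nabla_x u = \theta\cdot\nabla_x (u_\lambda^2/2)$ (both identities being valid a.e., since $\nabla_x u_\lambda = \mathbf{1}_{\{u>\lambda\}} \nabla_x u$), the transport contribution vanishes and we are left with
\begin{equation*}
   \tfrac12 \del_t \int_{\R^d}\!\!\int_{\Ss^{d-1}} u_\lambda^2 \dtheta\dx
  = \int_{\R^d}\!\!\int_{\Ss^{d-1}} \CalL_{b_s}(u)\, u_\lambda \dtheta\dx .
\end{equation*}
Since constants lie in the kernel of $\CalL_{b_s}$, we can replace $u$ by $u-\lambda$ inside the scattering operator. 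Then the weak form \eqref{wf} yields
\begin{equation*}
   \int_{\Ss^{d-1}} \CalL_{b_s}(u-\lambda)\, u_\lambda \dtheta
  = -\tfrac12 \iint_{\Ss^{d-1}\times\Ss^{d-1}} \bigl((u-\lambda)(\theta')-(u-\lambda)(\theta)\bigr)\bigl(u_\lambda(\theta')-u_\lambda(\theta)\bigr) b_s \dtheta'\dtheta.
\end{equation*}
The pointwise inequality $(B-A)(\Phi(B)-\Phi(A)) \ge (\Phi(B)-\Phi(A))^2$, which holds for $\Phi(x)=(x-\lambda)_+$ because $\Phi$ is $1$-Lipschitz and monotone, gives
\begin{equation*}
   \int_{\Ss^{d-1}} \CalL_{b_s}(u)\, u_\lambda \dtheta
  \;\le\; -\tfrac12 \iint_{\Ss^{d-1}\times\Ss^{d-1}} \bigl(u_\lambda(\theta')-u_\lambda(\theta)\bigr)^{2} b_s \dtheta'\dtheta
  \;=\; \int_{\Ss^{d-1}} \CalL_{b_s}(u_\lambda)\, u_\lambda \dtheta .
\end{equation*}
At this point apply the decomposition \eqref{smooth9} together with the representation~\eqref{smooth10.1} to the right-hand side: the singular part contributes $-D_0 \|u_\lambda\|_{H^s_\theta}^{2} = -D_0 \|(-\Delta_v)^{s/2}\WJL\|_{L^2(\R^{d-1})}^{2}$, while the remaining bounded pieces $c_{s,d} b(1)\,\mathbf{1}$ and $\CalL_h$ are controlled by $\|h\|_{L^1}\|u_\lambda\|_{L^2_\theta}^{2}$ via \eqref{smooth0.5EUS}. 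Integrating in $x$ and collecting constants into $D_1$ produces exactly \eqref{eq:energy-level}.

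The only delicate point is rigorously justifying that $u_\lambda$ is an admissible test function: by Theorem~\ref{t2EUS} the solution $u$ is smooth for $t>0$, so truncations are Lipschitz in $(t,x,\theta)$, $(-\Delta_v)^{s/2} \WJL$ is well-defined in $L^2_{x,v}$ by chain-rule/composition bounds for fractional norms of truncations, and all the integrations by parts above are legitimate; the pointwise Stampacchia inequality passes through the weak formulation unchanged. No further difficulty is expected beyond this verification.
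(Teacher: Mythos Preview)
Your proposal is correct and follows essentially the same route as the paper: multiply the equation by $u_\lambda$, show $\int_{\Ss^{d-1}}\CalL_{b_s}(u)\,u_\lambda\,\dtheta \le \int_{\Ss^{d-1}}\CalL_{b_s}(u_\lambda)\,u_\lambda\,\dtheta$, and then invoke the coercivity identity~\eqref{smooth10.1}. The only cosmetic difference is that the paper verifies the key inequality by direct manipulation of the indicator functions $\One_{\{u>\lambda\}}$ inside the double integral, whereas you package the same computation as the Stampacchia-type pointwise bound $(B-A)(\Phi(B)-\Phi(A))\ge(\Phi(B)-\Phi(A))^2$ for $\Phi=(\cdot-\lambda)_+$; these are two phrasings of the same monotonicity argument.
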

\begin{proof}
The level set function $u_\lambda$ satisfies 
\begin{align} \label{eq:u-level}
    \del_t u_\lambda + \theta\cdot \Grad u_\lambda
    = \One_{u > \lambda} \, \CalL(u) \,.
\end{align}
By the definition of $\CalL(u)$ and $u_\lambda$, we have
\begin{equation} \nn
\begin{aligned}
-\int_{\Ss^{d-1}}& u_\lambda(\theta) \,\CalL(u) \dtheta
   = \int_{\Ss^{d-1}}\int_{\Ss^{d-1}}
             u_\lambda(\theta)  
             \frac{u(\theta) - u(\theta')}{(1 - \theta \cdot \theta')^{\frac{d-1}{2} + s}}\,
              b_s (\theta \cdot \theta) \dtheta' \dtheta
\\
   & = \int_{\Ss^{d-1}}\int_{\Ss^{d-1}}
           u_\lambda(\theta)  
           \frac{(u(\theta)-\lambda) \One_{u(\theta) > \lambda} 
                    - (u(\theta')-\lambda) \One_{u(\theta) > \lambda}}
                    {(1 - \theta \cdot \theta')^{\frac{d-1}{2} + s}}\,
              b_s (\theta \cdot \theta) \dtheta' \dtheta
\\
   & \geq  \int_{\Ss^{d-1}}\int_{\Ss^{d-1}}
           u_\lambda(\theta)  
           \frac{(u(\theta)-\lambda) \One_{u(\theta) > \lambda} 
                    - (u(\theta')-\lambda) \One_{u(\theta') > \lambda}
                       \One_{u(\theta) > \lambda}}
                    {(1 - \theta \cdot \theta')^{\frac{d-1}{2} + s}}\,
              b_s (\theta \cdot \theta) \dtheta' \dtheta
\\
   & =  \int_{\Ss^{d-1}}\int_{\Ss^{d-1}}
           u_\lambda(\theta)  
           \frac{(u(\theta)-\lambda) \One_{u(\theta) > \lambda} 
                    - (u(\theta')-\lambda) \One_{u(\theta') > \lambda}}
                    {(1 - \theta \cdot \theta')^{\frac{d-1}{2} + s}}\,
              b_s (\theta \cdot \theta) \dtheta' \dtheta
\\
   & =  \int_{\Ss^{d-1}}\int_{\Ss^{d-1}}
           u_\lambda(\theta)  
           \frac{u_\lambda(\theta) - u_\lambda(\theta')}
                    {(1 - \theta \cdot \theta')^{\frac{d-1}{2} + s}}\,
              b_s (\theta \cdot \theta) \dtheta' \dtheta
       = - \int_{\Ss^{d-1}} u_\lambda\, \CalL(u_\lambda) \dtheta
\\
  & = D_0 \|(-\Delta_v)^{s/2} \WJL\|_{L^2(\R^{d-1})}^2
           - D_1 \|u_\lambda\|_{L^2(\Ss^{d-1})}^2 \,,
\end{aligned}
\end{equation}
where the last inequality follows by~\eqref{main:smooth2}.  Estimate~\eqref{eq:energy-level} is then obtained upon multiplying~\eqref{eq:u-level} by $u_\lambda $ and integrating in $(x, \theta)$.
\end{proof}
\begin{Proposition} \label{prop:L-infty-I}
Suppose $u \in L^2((t_0, t_1) \times \R^d \times \Ss^{d-1})$ is a solution to~\eqref{rte1}.  Suppose $\tilde b \in \mathcal{C}^{N_0}([-1,1])$ with $N_0 \geq d + 2$.
Then $\vint{v}^{-(d-1)}\big[\CalL(u)\big]_{\mathcal{J}} \in L^\infty((t_\ast, t_1) \times \R^d \times \R^{d-1})$ for any $t_\ast \in (t_0, t_1)$. Moreover, 
\begin{align} \label{est:W-J-10}
     \norm{\vint{v}^{-(d-1)}\big[\CalL(u)\big]_{\mathcal{J}}}_{L^\infty((t_\ast, t_1) \times \R^d \times \R^{d-1})}
\leq 
     c_{8} \norm{u}_{L^2((t_0, t_1) \times \R^d \times \Ss^{d-1})} \,,
\end{align}
where $c_8$ only depends on $d, s, b, \frac{1}{t_\ast - t_0}$. In particular, $c_8$ is independent of $t_1$.
\end{Proposition}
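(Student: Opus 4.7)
The plan is to sidestep the singular integral $\CalL$ entirely by using the transport equation itself. From~\eqref{rte1} one has $\CalL(u) = \del_t u + \theta \cdot \nabla_x u$, and pulling this back via the stereographic projection gives
\[
   [\CalL(u)]_{\mathcal{J}}(t,x,v) = \del_t u_{\mathcal{J}}(t,x,v) + \mathcal{J}(v) \cdot \nabla_x u_{\mathcal{J}}(t,x,v) \,.
\]
Since $|\mathcal{J}(v)| = 1$ and $\vint{v}^{-(d-1)} \leq 1$, this produces the pointwise bound
\[
   \abs{\vint{v}^{-(d-1)}[\CalL(u)]_{\mathcal{J}}(t,x, v)}
   \leq \abs{\del_t u(t,x, \mathcal{J}(v))} + \abs{\nabla_x u(t,x, \mathcal{J}(v))} \,,
\]
so the proposition reduces to the Sobolev-type bounds
\[
   \norm{\del_t u}_{L^\infty((t_\ast, t_1) \times \R^d \times \Ss^{d-1})} + \norm{\nabla_x u}_{L^\infty((t_\ast, t_1) \times \R^d \times \Ss^{d-1})}
   \lesssim \norm{u}_{L^2((t_0,t_1) \times \R^d \times \Ss^{d-1})} \,.
\]

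To obtain these $L^\infty$ bounds I would invoke the mixed Sobolev embedding
\[
   H^{l_1}(\R^d;\, H^{l_2}(\Ss^{d-1})) \hookrightarrow L^\infty(\R^d \times \Ss^{d-1}) \,,
   \qquad l_1 > d/2 \,, \quad l_2 > (d-1)/2 \,,
\]
applied to $\del_t u$ and to each component of $\nabla_x u$. Both of these functions satisfy the same transport equation as $u$, with $L^2$ data at an intermediate time by Proposition~\ref{prop:u-time-reg}, so Propositions~\ref{prop:infty-reg} and~\ref{prop:u-time-reg} apply to them verbatim. Proposition~\ref{prop:u-time-reg} delivers the spatial $H^{l_1}$-regularity for any $l_1$ through iterated $(-\Delta_x)^{l_1/2}$. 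The angular $H^{l_2}$-regularity comes from Proposition~\ref{prop:infty-reg}, which gives $\|(-\Delta_v)^{(k+1)s/2} w_{\mathcal{J}}\|_{L^2_{x,v}} \lesssim \norm{u}_{L^2}$ for $1 \leq k+1 \leq \lfloor N_0/s \rfloor$; translating this $v$-regularity to an intrinsic sphere Sobolev norm via the norm identity~\eqref{smooth10.1} supplies angular regularity of order up to roughly $N_0/2$. The standing hypothesis $N_0 \geq d + 2$ thus ensures $N_0/2 > (d-1)/2$, making the embedding available. The supremum in time is finally obtained from the $L^2$-in-time estimates by the standard energy argument of Lemma~\ref{prop:basic-L-2} applied to the higher-order derivatives of $u$, which all solve a transport equation of the same structure.

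The main technical burden will be the bookkeeping required to move between $v$-coordinate Sobolev bounds on $w_{\mathcal{J}}$ and bounds compatible with the sphere embedding, while handling the weight $\vint{v}^{d-1-2s}$ that relates $u_{\mathcal{J}}$ and $w_{\mathcal{J}}$. A clean organizational device is to work throughout with $F := u_{\mathcal{J}}/\vint{v}^{d-1} = w_{\mathcal{J}}/\vint{v}^{2s}$; then the identity above reads $\vint{v}^{-(d-1)}[\CalL(u)]_{\mathcal{J}} = \del_t F + \mathcal{J}(v) \cdot \nabla_x F$, and one bounds $\norm{\del_t F}_{L^\infty_{x,v}} + \norm{\nabla_x F}_{L^\infty_{x,v}}$ by Sobolev embedding on the Euclidean product $\R^d \times \R^{d-1}$. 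Multiplication by the smooth bounded factor $\vint{v}^{-2s}$ preserves fractional Sobolev spaces, so fractional $v$-derivatives of $F$ are controlled by those of $w_{\mathcal{J}}$ produced by Proposition~\ref{prop:infty-reg}, and the chain-rule remainders only improve matters by extra decay at infinity. A minor point to reconcile is that Proposition~\ref{prop:infty-reg} is stated under $h \in \mathcal{C}^{N_0}$ while the present hypothesis is on $\tilde b$; this translates routinely given the decomposition~\eqref{skA2}.
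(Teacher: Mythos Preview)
Your proposal is correct and coincides with the paper's argument at every essential point: both rewrite $\vint{v}^{-(d-1)}[\CalL(u)]_{\mathcal J}$ via the transport equation as $\del_t F + \theta(v)\cdot\nabla_x F$ with $F = \WJ/\vint{v}^{2s}$ (your ``clean organizational device'' is exactly the paper's identity~\eqref{eq:W-J-5}), and both draw the required high regularity of $F$ from Propositions~\ref{prop:infty-reg} and~\ref{prop:u-time-reg} before applying a Sobolev embedding. The only real difference is the treatment of the time variable. The paper performs the embedding on the full space $H^{d+1}\big((t_\ast,t_1)\times\R^d\times\R^{d-1}\big)\hookrightarrow L^\infty$, so the supremum in $t$ falls out automatically; you instead embed only in $(x,v)$ (or $(x,\theta)$) and defer the time supremum to a separate step. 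Your suggested mechanism for that step---Lemma~\ref{prop:basic-L-2} applied to ``higher-order derivatives of $u$, which all solve a transport equation of the same structure''---is a bit loose, since $v$-derivatives do not commute with $\theta(v)\cdot\nabla_x$ and hence do not satisfy the clean equation required by that lemma. The painless fix is either to include $t$ in the joint embedding as the paper does, or to observe that Proposition~\ref{prop:u-time-reg} also controls one additional $\del_t$ of every mixed derivative and then invoke one-dimensional Sobolev embedding in time.
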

\begin{proof}
We will show that $\vint{v}^{-(d-1)}\big[\CalL(u)\big]_{\mathcal{J}} \in H^{d+1}((t_\ast, t_1) \times \R^d \times \R^{d-1})$ and apply the Sobolev imbedding $H^{d+1}((t_\ast, t_1) \times \R^d \times \R^{d-1}) \hookrightarrow L^\infty((t_\ast, t_1) \times \R^d \times \R^{d-1})$.  
By the transport equation~\eqref{rte1}, $\WJ$ satisfies
\begin{align} \label{eq:W-J-5}
    \del_t \frac{\WJ}{\vint{v}^{2s}} 
    + \theta(v) \cdot \Grad \frac{\WJ}{\vint{v}^{2s}} 
  = \vint{v}^{-(d-1)} \big[\CalL(u)\big]_{\mathcal{J}} \,.
\end{align}
Therefore, we only need to show that 
\begin{align*}
    \del_t \frac{\WJ}{\vint{v}^{2s}} \in H^{d+1}((t_\ast, t_1) \times \R^d \times \R^{d-1}) \,,
\qquad
   \Grad \frac{\WJ}{\vint{v}^{2s}} \in H^{d+1}((t_\ast, t_1) \times \R^d \times \R^{d-1}) \,,
\end{align*}
or simply
\begin{align} \label{reg:w-J}
    \frac{\WJ}{\vint{v}^{2s}} \in H^{d+2}((t_\ast, t_1) \times \R^d \times \R^{d-1}) \,.
\end{align}
Using \eqref{smooth9} 
\begin{align*}
     \vint{v}^{-(d-1)}\big[\CalL(u)\big]_{\mathcal{J}}
 = - D \vint{v}^{2s} (-\Delta_v)^s \WJ 
    + \vint{v}^{-{d-1}} \big[\CalK(u)\big]_{\mathcal{J}} \,,
\end{align*}
where $\CalK=c_{s,d}\textbf{1} + \mathcal{I}_{h}$ is a bounded operator on $L^2(\Ss^{d-1})$.  Note that if we denote
\begin{align*}
    W_{j_1,j_2} = (-\Delta_x)^{j_1} \del_t^{j_2} u \,,
\end{align*}
then $W_{j_1, j_2}$ satisfies the transport equation~\eqref{rte1} and $W_{j_1, j_2} (t_\ast, \cdot, \cdot) \in L^2_{x,\theta}$ for a.e. $t_\ast \in (t_0, t_1)$ by Proposition~\ref{prop:u-time-reg}. Hence Proposition~\ref{prop:infty-reg} applies and gives
\begin{align*}
      \norm{(-\Delta_v)^{\frac{d+2}{2}} (-\Delta_x)^{j_1} \del_t^{j_2} \WJ}_{L^2((t_*, t_1) \times \R^{d} \times \R^{d-1})}
\leq 
   c_{8,6} \norm{u}_{L^2((t_0, t_1) \times \R^d \times \Ss^{d-1})} \,,
\end{align*}
for any $j_1, j_2 \geq 0$ and any $t_\ast \in (t_0, t_1)$. Here $c_{8,6}$ only depends on $d,s,b,j_1, j_2, \frac{1}{t_\ast - t_0}$. This in particular implies that
for $c_{8,7} = c_{8,5} + c_{8,6}$,
\begin{align*}
      \norm{\vpran{I + (-\Delta_v)^{\frac{d+2}{2}}} (-\Delta_x)^{j_1} \del_t^{j_2} \frac{\WJ}{\vint{v}^{2s}}}_{L^2((t_*, t_1) \times \R^{d} \times \R^{d-1})}
\leq 
  c_{8,7} \norm{u}_{L^2((t_0, t_1) \times \R^d \times \Ss^{d-1})} \,,
\end{align*}
which proves~\eqref{reg:w-J} and~\eqref{est:W-J-10}.
\end{proof}
\noindent
Let $\rho_{\lambda, N}$ be the density function such that
\begin{align} \label{def:rho-lambda}
   \rho_{\lambda, N}
   = \int_{\Ss^{d-1}} u_\lambda \, (\eta_N\circ\mathcal{S}) \dtheta
   = 2^{d-1} \int_{\R^{d-1}} \big[u_\lambda\big]_{\mathcal{J}} \, \vint{v}^{-2(d-1)} \eta_N(v)\dv \,,
\end{align}
where $\eta_N$ is a cutoff function $\eta_N \in \mathcal{C}_c^\infty(\R^{d-1})$, with $0 \leq \eta_N \leq 1$, and such that
\begin{equation*}
\eta_N = 1\;\; \text{on}\;\; B(0, N-1)\,, \qquad supp(\eta_N) \subseteq B(0, N) \subseteq \R^{d-1} \,.
\end{equation*}
Here $B(0, R)$ denotes the ball centered at $0$ with radius $R$. The introduction of $\eta_N$ will be clear in the proof of the following Proposition which gives a bound on $\rho_{\lambda, N}$.
\begin{Proposition} \label{prop:VA-N}
Let $u \in L^2((t_0, t_1) \times \R^d \times \Ss^{d-1})$ be a strong solution to~\eqref{rte1}. Let $u_\lambda$ and $\rho_{\lambda,N}$ be defined in~\eqref{def:u-w-lambda} and \eqref{def:rho-lambda} respectively. Then 
\begin{align*}
    (-\Delta_x)^{\beta} \rho_{\lambda, N} \in L^2((t_0, t_1) \times \R^{d}) \,,
\end{align*}
where $\beta$ is the same number as in Proposition~\ref{Lemma:VA-Weak}.
Moreover, 
\begin{align} 
\|(-\Delta_x)^{\beta}& \rho_{\lambda, N}\|_{L^2((t_0, t_1) \times \R^d)}^2 \leq
    c_0 \vpran{\|u^{o}_\lambda\|_{L^2_{x,\theta}}^2
                  + \|u_\lambda\|_{L^2((t_0, t_1) \times \R^d \times \Ss^{d-1})}^2}+
       \label{ineq:rho-lambda-N}
\\
& \hspace{-0.5cm}
     + c_0 N^{2(d-1)} 
        \norm{\vint{v}^{-(d-1)}\big[\CalL(u)\big]_{\mathcal{J}}}_{L^\infty((t_0, t_1) \times \R^d \times \R^{d-1})}^2
        \vpran{\int_{t_0}^{t_1} \norm{\One_{\UJ > \lambda} \, \eta_N(v)}_{L^2_{x,v}}^2 \dtau}, \nn
\end{align}
where $c_0$ is the constant in~\eqref{ineq:VA-Weak}. In particular, $c_0$ is independent of $N$.
\end{Proposition}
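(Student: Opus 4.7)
\emph{Proof plan.} The idea is to reduce to the averaging lemma of Proposition \ref{Lemma:VA-Weak} by introducing the auxiliary function $u^\ast(t,x,\theta) = u_\lambda(t,x,\theta) \, (\eta_N \circ \CalS)(\theta)$, whose stereographic projection is precisely $u^\ast_\CalJ = [u_\lambda]_\CalJ \, \eta_N(v)$ and whose angular integral is exactly $\rho_{\lambda,N}$. Since $\eta_N \circ \CalS$ depends neither on $t$ nor on $x$, multiplying equation \eqref{eq:u-level} by this cut-off immediately yields the transport equation
\begin{equation*}
   \del_t u^\ast + \theta \cdot \Grad u^\ast
   = (\eta_N \circ \CalS) \, \One_{u>\lambda} \, \CalL(u) \,,
\qquad
   u^\ast\big|_{t=t_0} = u^o_\lambda \, (\eta_N \circ \CalS) \,,
\end{equation*}
so in the projected variables
\begin{equation*}
   \del_t u^\ast_\CalJ + \theta(v) \cdot \Grad u^\ast_\CalJ
   = g_1(t,x,v) \,,
\qquad
   g_1 := \eta_N(v) \, \One_{\UJ > \lambda} \, [\CalL(u)]_\CalJ \,.
\end{equation*}

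With this at hand I would apply Proposition \ref{Lemma:VA-Weak} with $g_2 \equiv 0$ to the function $u^\ast_\CalJ$. Since the first two terms on the right-hand side of \eqref{ineq:VA-Weak} are (via the Jacobian $J(v) \dv = \dtheta$) nothing but the $L^2_{x,\theta}$ norms on the sphere of $u^\ast$, and since $|\eta_N| \le 1$, they are immediately dominated by $\|u^o_\lambda\|_{L^2_{x,\theta}}^2$ and $\|u_\lambda\|_{L^2((t_0,t_1)\times\R^d\times\SD)}^2$ respectively, which accounts for the first term on the right of \eqref{ineq:rho-lambda-N}.

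The crucial point, and the reason the cut-off $\eta_N$ was introduced in the definition of $\rho_{\lambda,N}$, is controlling $\|g_1\|_{L^2_{t,x,v}}^2$. Writing
\begin{equation*}
   [\CalL(u)]_\CalJ = \langle v\rangle^{d-1} \cdot \bigl(\langle v\rangle^{-(d-1)} [\CalL(u)]_\CalJ\bigr) \,,
\end{equation*}
and observing that $\eta_N$ is supported in $B(0,N)$ so that $\langle v\rangle^{2(d-1)} \le c \, N^{2(d-1)}$ on $\mathrm{supp}(\eta_N)$, one obtains
\begin{equation*}
   \|g_1\|_{L^2_{t,x,v}}^2
   \le c \, N^{2(d-1)} \,
         \bigl\| \langle v\rangle^{-(d-1)}[\CalL(u)]_\CalJ \bigr\|_{L^\infty}^2
         \int_{t_0}^{t_1} \|\One_{\UJ>\lambda}\eta_N\|_{L^2_{x,v}}^2 \dtau \,,
\end{equation*}
which is exactly the second term in the claimed bound \eqref{ineq:rho-lambda-N}. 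The finiteness of the $L^\infty$ factor is furnished by Proposition \ref{prop:L-infty-I}.

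The main subtlety (and the step that motivated the particular form of $\rho_{\lambda,N}$) is precisely the one just described: the raw forcing $\One_{u>\lambda}\CalL(u)$ is \emph{not} in $L^2_{t,x,v}$ because $[\CalL(u)]_\CalJ$ grows like $\langle v\rangle^{d-1}$ at infinity in $v$, so applying the averaging lemma directly to $u_\lambda$ without the cut-off would fail. Replacing $u_\lambda$ by $u^\ast = u_\lambda (\eta_N\circ\CalS)$ confines the forcing to a ball of radius $N$ in the projected variable, at the price of the $N^{2(d-1)}$ factor which will later be balanced against the smallness of the level-set measure in the De Giorgi iteration of Proposition \ref{prop:u-decay}.
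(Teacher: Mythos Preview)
Your proposal is correct and follows essentially the same approach as the paper: multiply the level-set equation \eqref{eq:u-level} by $\eta_N\circ\CalS$, apply Proposition~\ref{Lemma:VA-Weak} with $g_2\equiv 0$ to the resulting transport equation for $[u_\lambda]_\CalJ\,\eta_N$, and bound the forcing term by pulling out the $L^\infty$-norm of $\vint{v}^{-(d-1)}[\CalL(u)]_\CalJ$ and using $\vint{v}^{2(d-1)}\lesssim N^{2(d-1)}$ on $\mathrm{supp}(\eta_N)$. Your additional remarks on why the cut-off is needed are accurate and match the paper's motivation.
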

\begin{proof}
Multiplying~\eqref{eq:u-level} by $\eta_N\circ\mathcal{S}$, we have the equation for $\big[u_\lambda\big]_{\mathcal{J}} \eta_N$ as
\begin{align*}
    \del_t \big(\big[u_\lambda\big]_{\mathcal{J}} \eta_N\big)
    + \theta(v) \cdot \Grad\big(\big[u_\lambda\big]_{\mathcal{J}} \eta_N\big)
    = \eta_N\, \One_{\{\UJ > \lambda\}}\,\big[ \CalL(u)\big]_{\mathcal{J}}.
\end{align*}
We can then apply Proposition~\ref{Lemma:VA-Weak} to the above equation and obtain
\begin{align*} 
\|&(-\Delta_x)^{\beta} \rho_{\lambda, N}\|_{L^2_{t, x}}^2 \leq
    c_0 \vpran{\|u^{o}_\lambda\|_{L^2_{x,\theta}}^2
                  + \|u_\lambda\|_{L^2_{t,x,\theta}}^2
                  + \norm{\eta_N\,\One_{\{\UJ > \lambda\}}\, \big[\CalL(u)\big]_{\mathcal{J}}}_{L^2_{t,x,v}}^2}
\\
& \leq                
c_0 \vpran{\|u^{o}_\lambda\|_{L^2_{x,\theta}}^2
                  + \|u_\lambda\|_{L^2_{t,x,\theta}}^2}
     + c_0 N^{2(d-1)} 
        \norm{\eta_N \,\One_{\{\UJ > \lambda\}}}_{L^2_{t,x,\theta}}^2
        \norm{\vint{v}^{-(d-1)}\big[\CalL(u)\big]_{\mathcal{J}}}_{L^\infty_{t, x, v}}^2 \,,
\end{align*}
where $c_0$ is the constant in~\eqref{ineq:VA-Weak}. 
\end{proof}
\begin{proof}[Proof of Proposition~\ref{prop:u-decay}]
Let $M > 0$ be a constant to be determined. Let
\begin{equation} \nn
     \lambda_k = M (1 - 2^{-k}) \,,
\qquad
     t_k = t_0 + (T_0 - t_0) (1 - 2^{-k}) \,, 
\qquad
     k \geq 1 \,,
\end{equation}
for any $1 < t_0 < T_0$. We want to show that $u \leq M$ a.e. in $(x, \theta)$ for any $t_0 < t \leq T_0$ if $M$ is chosen large enough. Define the functional
\begin{equation} \label{def:U-k}
     U_k 
     = \sup_{t_k \leq t \leq T_0} \norm{u_{\lambda_k}}_{L^2_{x, \theta}}^2
     + \int_{t_k}^{T_0}  
            \norm{(-\Delta_v)^{\frac{s}{2}} w_{\CalJ, \lambda_k}^+}_{L^2_{x,v}}^2(\tau) \dtau 
      + \int_{t_k}^{T_0} \norm{u_{\lambda_k}}_{L^2_{x,v}}^2 \dtau\,,
\end{equation}
where recall that $u_{\lambda_k}, w_{\CalJ, \lambda_k}^+$ are defined in~\eqref{def:u-w-lambda}.
Our goal is to prove that $U_k \to 0$ as $k \to \infty$, which implies $u \leq M$ a.e. in $(x, \theta)$ for any $t \in (t_0, T_0)$. For any $s \in (t_{k-1}, t_k)$ and $t \in (t_k, T_0)$, integrate~\eqref{eq:energy-level} from $s$ to $t$ and from $s$ to $T_0$. This gives 
\begin{equation} \nn
    \sup_{t_k \leq t \leq T_0} \int_{\R^d}\int_{\Ss^{d-1}} u_{\lambda_k}^2(t) \dtheta\dx 
 \leq 
     \int_{\R^d}\int_{\Ss^{d-1}} u_{\lambda_k}^2(s) \dtheta\dx \,,
\end{equation}
and
\begin{align*}
\int_{t_k}^{T_0} \int_{\R^d}\int_{\R^{d-1}} 
             \abs{(-\Delta_v)^{\frac{s}{2}} w_{\CalJ, \lambda_k}^+}^2 \dv\dx \dtau
& \leq    
     \int_{\R^d}\int_{\Ss^{d-1}} u_{\lambda_k}^2(s) \dtheta\dx
\\
& \hspace{-1cm}
     + \frac{D_1}{D_0} \int_{t_{k-1}}^{T_0}  \int_{\R^d} \int_{\Ss^{d-1}} u_{\lambda_k}^2 \dtheta \dx \dtau \,,
\end{align*}
where $D_0, D_1$ are the constants given in~\eqref{MEE2}.  Adding up these two inequalities
\begin{equation} \nn
     U_k
 \leq 
     2 \int_{\R^d}\int_{\Ss^{d-1}} u_{\lambda_k}^2(s) \dtheta\dx
     + \vpran{\frac{2D_1}{D_0} + 1} \int_{t_{k-1}}^{T_0}  \int_{\R^d} \int_{\Ss^{d-1}} u_{\lambda_k}^2 \dtheta \dx \dtau \,,
\end{equation}
and taking the average in $s$ over $[t_{k-1}, t_k]$
\begin{equation} \nn
\begin{aligned}
     U_k
 \leq 
     \frac{2}{t_k - t_{k-1}} &\int_{t_{k-1}}^{t_k} \int_{\R^d}\int_{\Ss^{d-1}} u_{\lambda_k}^2(s) \dtheta\dx
     + \frac{2D_1}{D_0} \int_{t_{k-1}}^{T_0}  \int_{\R^d} \int_{\Ss^{d-1}} u_{\lambda_k}^2 \dtheta \dx \dtau
\\
& \leq  
     \left(\frac{2^{k+2}}{T_0 - t_0} + \frac{2D_1}{D_0} + 1\right) 
     \int_{t_{k-1}}^{T_0} \int_{\R^d}\int_{\Ss^{d-1}} u_{\lambda_k}^2(s) \dtheta\dx \,.
\end{aligned}
\end{equation}
Additionally, using Proposition~\ref{prop:VA-N}, Proposition~\ref{prop:L-infty-I}, Proposition~\ref{prop:L-1-L-2}, and the non-increase property of $\norm{u}_{L^2_{x,\theta}}$,
\begin{equation} 
   \label{est:velocity-average}
\begin{aligned}
\int_{t_k}^{T_0}  \int_{\R^d}&
          \left| (-\Delta_x)^{\beta} \rho_{\lambda_k, N}(x, \tau)\right|^2 \dx\dtau   
\\
&\leq 
      c_0 U_k + c_{9,0} (T_0 - t_0) N^{2(d-1)} 
        \int_{t_k}^{T_0} \norm{\eta_N\,\One_{\{u > \lambda_k\}}}_{L^2_{x,v}}^2 \dtau\,, \end{aligned}
\end{equation}
where $\rho_{\lambda_k, N}$ is defined in~\eqref{def:rho-lambda} with $\lambda$ replaced by $\lambda_k$. Here the constant $c_{9,0}$ only depends on $d, s, b$, and $\norm{u_{o}}_{L^1_{x, \theta}}$. In particular, $c_{9,0}$ is independent of $T_0$.  Now, following \cite{CafVas} we note that
\begin{equation} \label{bound:u-lambda-k-1-One}
     \One_{\{u_{\lambda_k} > 0\}} 
\leq 
     \left( \frac{2^k}{M} u_{\lambda_{k-1}}\right)^2 \,,
\end{equation}
hence,
\begin{align*}
   \int_{t_k}^{T_0} \norm{\eta_N\,\One_{\{\UJ > \lambda_k\}}}_{L^2_{x,v}}^2 \dtau
\leq 
   \frac{2^{2k}}{M^2} 
   \int_{t_k}^{T_0} \norm{u_{\lambda_k}}_{L^2_{x,\theta}}^2 \dtau
\leq 
   \frac{2^{2k}}{M^2} U_k \,.
\end{align*}
In light of \eqref{est:velocity-average} and the Sobolev imbedding over $x \in\R^d$ it follows that
\begin{equation} \label{ineq:Sobolev-rho}
     \int_{t_k}^{T_0} \left(\int_{\R^d} \rho_{\lambda_k, N}^{p_\beta} \dx\right)^{2/{p_\beta}} \dtau
\leq 
      \vpran{c_0 + c_{9,0} (T_0 - t_0) N^{2(d-1)} \frac{2^{2k}}{M^2}} \, U_k \,,
\end{equation}
where $\frac{1}{p_\beta} = \frac 12 - \frac{\beta}{d}$. Note that $p_\beta > 2$ and let 
\begin{equation*}
      q_\beta = 4 - 4/{p_\beta} \in (2, 4) \cap (2, p_\beta) \,.
\end{equation*} 
Then, using \eqref{ineq:Sobolev-rho} and H\"{o}lder inequality,
\begin{equation} \label{ineq:U-k-1}
\begin{aligned}
     \int_{t_k}^{T_0} \int_{\R^d}  
             \rho_{\lambda_k, N}^{q_\beta} \dx\dtau
&\leq          
     \int_{t_k}^{T_0}  
             \left(\int_{\R^d}\rho_{\lambda_k, N}^{p_\beta} \dx \right)^{\frac{2}{p_\beta}}
             \left(\int_{\R^d}\rho_{\lambda_k, N}^2 \dx \right)^{1-\frac{2}{p_\beta}} \dtau
\\
& \hspace{-1.5cm}
\leq 
    \left( \int_{t_k}^{T_0}  
             \left(\int_{\R^d}\rho_{\lambda_k, N}^{p_\beta} \dx \right)^{\frac{2}{p_\beta}} \dtau \right)
      \left(\sup_{t_k \leq t \leq T_0}  
                \int_{\R^d}\rho_{\lambda_k}^2 \dx \right)^{1-\frac{2}{p_\beta}} 
\\
& \hspace{-1.5cm}
\leq 
    \left( \int_{t_k}^{T_0}  
             \left(\int_{\R^d}\rho_{\lambda_k, N}^{p_\beta} \dx \right)^{\frac{2}{p_\beta}} \dtau \right)
      \left(\sup_{t_k \leq t \leq T_0}  
                \int_{\R^d} \int_{\Ss^{d-1}} u_{\lambda_k}^2 \dtheta\dx \right)^{1-\frac{2}{p_\beta}} 
\\
& \hspace{-1.5cm}
\leq 
    \vpran{c_0 + c_{9,0} (T_0 - t_0) N^{2(d-1)} \frac{2^{2k}}{M^2}} \, U_k^{\frac{q_\beta}{2}} \,.
\end{aligned}             
\end{equation}
Next, using \eqref{def:U-k} and the Sobolev imbedding over $\theta \in \Ss^{d-1}$
\begin{equation} \label{ineq:Sobolev-u}
     \int_{t_k}^{T_0} \int_{\R^d} 
       \left(\int_{\Ss^{d-1}} u_{\lambda_k}^{p_s} \dtheta\right)^{2/{p_s}} \dx\dtau
\leq 
      c_{9,1} \, U_k \,,
\end{equation}
where $c_{9,1}$ only depends on $d,s$.  Recall that $\tfrac{1}{p_s} = \tfrac 12 - \tfrac{s}{d-1}$ and $p_s > 2$.  Let 
\begin{equation} \label{def:parameters-1-1}
            \tilde\alpha_1 = \frac{2q_\beta - 2}{p_s q_\beta - 2} \in (0, 1) \,, 
\qquad \tilde\alpha_2 = \frac{p_s}{2} \tilde\alpha_1 \in (0, 1) \,,
\qquad  \tilde r = p_s \tilde\alpha_1 + (1 - \tilde\alpha_1) > 2 \,,
\end{equation} 
and observe that
\begin{equation} \nn
    \frac{\tilde\alpha_1}{\tilde\alpha_2} = \frac{2}{p_s} \,,
\qquad 
    \frac{1-\tilde\alpha_1}{1-\tilde\alpha_2} = q_\beta \,. 
\end{equation}
Therefore,
\begin{equation} \nn
     \tilde r 
     = 2 + \frac{(p_s - 2)(q_\beta - 2)}{p_s q_\beta - 2} 
     = 3 - \frac{2(p_s + q_\beta - 2)}{p_s q_\beta -2}
     \in (2, 3) \,.
\end{equation}
Using \eqref{ineq:Sobolev-u}, H\"{o}lder inequality, and the definitions of $\tilde\alpha_1, \tilde\alpha_2, r$ in~\eqref{def:parameters-1-1} it follows then
\begin{equation} \label{est:u-lambda-N-r}
\begin{aligned}
    \int_{t_k}^{T_0} \int_{\R^d} \int_{\Ss^{d-1}} u_{\lambda_k, N}^{\tilde r} \dtheta\dx\dtau
&\leq 
   \int_{t_k}^{T_0} \int_{\R^d} 
      \left(\int_{\Ss^{d-1}} u_{\lambda_k}^{p_s} \dtheta\right)^{\tilde\alpha_1} 
      \rho_{\lambda_k, N}^{1-\tilde\alpha_1}
      \dx\dtau
\\
&\hspace{-3.2cm}
\leq 
   \left(\int_{t_k}^{T_0} \int_{\R^d} 
      \left(\int_{\Ss^{d-1}} u_{\lambda_k}^{p_s} \dtheta\right)^{\frac{\tilde\alpha_1}{\tilde\alpha_2}} 
      \dx\dtau\right)^{\tilde\alpha_2}
      \vpran{\int_{t_k}^{T_0} \int_{\R^d} \rho_{\lambda_k, N}^{\frac{1-\tilde\alpha_1}{1-\tilde\alpha_2}} \dx\dtau}^{1-\tilde\alpha_2}
\\
& \hspace{-3.2cm} 
  = \left(\int_{t_k}^{T_0} \int_{\R^d} 
      \left(\int_{\Ss^{d-1}} u_{\lambda_k}^{p_s} \dtheta\right)^{2/p_s} 
      \dx\dtau\right)^{\tilde\alpha_2}
   \left(\int_{t_k}^{T_0} \int_{\R^d} \rho_{\lambda_k, N}^{q_\beta} \dx\dtau \right)^{1-\tilde\alpha_2}
\\
& \hspace{-3.2cm}
   \leq c_{9,2} \vpran{1 + (T_0 - t_0) N^{2(d-1)} \frac{2^{2k}}{M^2}}^{1-\tilde\alpha_2} U_k^{\tilde r/2} \,,
\end{aligned}
\end{equation}
where $c_{9,2}$ only depends on $\beta, d, s, b, t_0$, and $\norm{u_o}_{L^1_{x, \theta}}$.  Consequently,
\begin{align*}
U_k \leq  
     \Big(\frac{2^{k+2}}{T_0 - t_0} &+ \frac{2D_1}{D_0} + 1 \Big) 
     \int_{t_{k-1}}^{T_0} \int_{\R^d}\int_{\Ss^{d-1}} u_{\lambda_k}^2 \big(\eta_N\circ\mathcal{S}\big) 
     \dtheta\dx\dtau
\\
&\hspace{-1cm}
    + \left(\frac{2^{k+2}}{T_0 - t_0} +\frac{2D_1}{D_0} + 1\right)
       \int_{t_{k-1}}^{T_0} \int_{\R^d}\int_{\{|v| \geq N/2\}} \big[u_{\lambda_k}\big]^{2}_{\mathcal{J}}    
           \vint{v}^{-2(d-1)} \dv\dx\dtau \,.
\end{align*}
Furthermore, using estimate \eqref{ineq:Sobolev-u}
\begin{align*}
\int_{t_{k-1}}^{T_0} &\int_{\R^d}\int_{\{|v| \geq N/2\}} \big[u_{\lambda_k}\big]^{2}_{\CalJ}    
           \vint{v}^{-2(d-1)} \dv\dx\dtau
\\
&\leq
    \vpran{\int_{t_{k-1}}^{T_0} \int_{\R^d}
    \vpran{\int_{\R^{d-1}} 
       \big[u_{\lambda_k}\big]^{p_{s}}_{\CalJ} \dv}^{2/p_s}\dx\dtau}
    \vpran{\int_{\{|v| \geq N/2\}}  \vint{v}^{-2(d-1)} \dv}
\\
&\quad \quad \leq c_{9,1} N^{-(d-1)} U_{k}
   \leq c_{9,1} N^{-(d-1)} U_{k-1} \,.
\end{align*}
Hence, recalling inequality \eqref{bound:u-lambda-k-1-One}
\begin{align*}
U_k
& \leq  
     \left(\frac{2^{k+2}}{T_0 - t_0} + \frac{2D_1}{D_0} + 1\right) 
     \vpran{\int_{t_{k-1}}^{T_0} \int_{\R^d}\int_{\Ss^{d-1}} u_{\lambda_k}^2 \big(\eta_N\circ\mathcal{S}\big) 
     \dtheta\dx\dtau
    + c_{9,1} N^{-(d-1)} U_{k-1}}
\\
\leq &
     \left(\frac{2^{k+2}}{T_0 - t_0} + \frac{2D_1}{D_0} + 1\right)
     \vpran{\frac{2^{k(\tilde r-2)}}{M^{\tilde r-2}}
     \int_{t_{k-1}}^{T_0} \int_{\R^d}\int_{\Ss^{d-1}} 
         u_{\lambda_{k-1}}^{\tilde r} \big(\eta_N\circ\mathcal{S}\big) \dtheta\dx\dtau 
    + c_{9,1} N^{-(d-1)} U_{k-1}},
\end{align*}
where $\tilde r > 2$ was defined in~\eqref{def:parameters-1-1}. Using estimate \eqref{est:u-lambda-N-r} and taking $k$ sufficiently large
\begin{align*}
   U_k
\leq
   \frac{2^{k+3} (1 + c_{9,1})}{T_0 - t_0}
    \vpran{\frac{2^{k(\tilde r-2)}}{M^{\tilde r-2}} 
                \vpran{(T_0 - t_0) N^{2(d-1)} \frac{2^{2k}}{M^2}}^{1-\tilde\alpha_2} U_{k-1}^{\tilde r/2}
                + N^{-(d-1)} U_{k-1}}\,.
\end{align*}
Choosing $N>0$ such that we minimize the right side of previous estimate
\begin{align*}
     \frac{2^{k(\tilde r-2)}}{M^{\tilde r-2}} 
     \vpran{(T_0 - t_0) N^{2(d-1)} \frac{2^{2k}}{M^2}}^{1-\tilde\alpha_2} 
     U_{k-1}^{\tilde r/2}
  = N^{-(d-1)} U_{k-1} \,,
\end{align*}
one concludes that
\begin{align*}
     U_k 
\leq 
   \frac{4(1 + c_{9,1})  \, 2^{k (1+\nu_1)}}{(T_0 - t_0)^{1-(1 - \tilde \alpha_2)\nu_0} M^{\nu_1}}\,
   U_{k-1}^{1 + (\tilde r/2 - 1) \nu_0} \,, \qquad \tilde r > 2 \,,
\end{align*}
where 
\begin{equation*} \label{def:nu-0-1}
    \nu_0 = \frac{1}{1 + 2(1-\tilde\alpha_2)} \in (0, 1) \,,
\qquad 
   \nu_1  = (\tilde r -2 + 2(1-\tilde\alpha_2))\nu_0 \in (0, 1) \,.
\end{equation*}
Since $\tilde r > 2$, it follows that $1 + (\tilde r/2 - 1) \nu_0 > 1$. Thus, for any fixed $t_0, T_0$ one chooses $M$ as
\begin{equation} \label{def:M-1}
     M = C_1(\tilde r) (T_0 - t_0)^{-\frac{1-(1-\tilde\alpha_2)\nu_0}{\nu_1}} U_0^{(\tilde r/2 - 1)\nu_0/\nu_1} \,,
\end{equation}
for some constant $C_1(\tilde r)$ sufficiently large.  Then, it can be shown that with this choice $U_k \to 0$ as $k \to \infty$ which proves that the solution $u$ is bounded for any positive time.  Let us now study the dependence of $M$ with time.  Denoting
\begin{equation*} \label{def:mu-1-2}
    \mu_1 = \frac{1-(1- \tilde\alpha_2)\nu_0}{\nu_1} \,,
\qquad
    \mu_2 = \big(\tfrac{\tilde r}{2}-1\big)\frac{\nu_0}{\nu_1} \,,
\end{equation*}
it follows that \eqref{def:M-1} simply writes as
\begin{equation} \label{def:M-1.1}
     M = C_1(\tilde r) (T_0 - t_0)^{-\mu_1} U_0^{\mu_2} \,.
\end{equation}
Since $\tilde\alpha_2 \in (0, 1)$ and $\tilde r \in (2, 3)$, we have
\begin{align}
    -\mu_1 + &\mu_2
  = - \frac{1}{\nu_1} \vpran{1 - \nu_0 \vpran{\frac{\tilde r}{2} - \tilde \alpha_2}} \nn
\\
 & = -\frac{1}{\nu_1\vpran{1 + 2(1 - \tilde \alpha_2)}}
     \vpran{1 + 2(1 - \tilde \alpha_2) - \frac{\tilde r}{2} + \tilde \alpha_2}
  \label{sign:exponent}
\\
 &\quad\quad = -\frac{1}{\nu_1 \vpran{1 + 2(1 - \tilde \alpha_2)}}
     \vpran{(1 - \tilde \alpha_2) + (2 - \frac{\tilde r}{2})} 
  < -\frac{1}{2 \nu_1}
  < -\frac{1}{2} \,. \nn
\end{align}
It also follows by \eqref{sign:exponent} that $\mu_1 > \mu_2 > 0$, and  
\begin{align} \label{bound:mu-2}
    \mu_2 = \frac{\frac{\tilde r}{2} - 1}{\tilde r - 2 + 2(1 - \tilde \alpha_2)}
               = \frac{\frac{1}{2} (\tilde r -2)}{\tilde r - 2 + 2(1 - \tilde \alpha_2)} 
               < \frac{1}{2} \,.
\end{align}
Now, by the definition of $U_0$
\begin{equation} \label{bound:U-0-1}
U_0 \leq \big(3 + \tfrac{2D_1}{D_0}\big)\sup_{[t_0, T_0]} \norm{u}_{L^2_{x,\theta}} (T_0 - t_0 + 1) \,,
\qquad T_0 - t_0 > 1 \,.
\end{equation}
In this way, for any $T > 1$ choose $t_0 = T$ and $T_0 = 2T$.  Then, formula \eqref{def:M-1.1} gives
\begin{align} \label{def:M-T}
    M = C_1(\tilde r) \,  T^{-\mu_1} \, U_0^{\mu_2}
\end{align}
and, using the non-increasing property of $\norm{u(t)}_{L^2_{x, \theta}}$, \eqref{bound:U-0-1} and \eqref{def:M-T} it follows that
\begin{align*}
     U_0 
&\leq \big(6 + \tfrac{4D_1}{D_0}\big)\, T \, \norm{u(T, \cdot, \cdot)}_{L^2_{x,\theta}}^2 \nn
\\
&\quad \leq \big(6 + \tfrac{4D_1}{D_0}\big)\, \norm{u_o}_{L^1_{x,\theta}} \, M \, T 
\\
& \quad\quad\leq \big(6 + \tfrac{4D_1}{D_0}\big) \, C_1(\tilde r)\norm{u_o}_{L^1_{x,\theta}}
           U_0^{\mu_2} T^{1-\mu_1} \,.\nn
\end{align*}
As a result,
\begin{align} \label{bound:U-0-3}
   U_0 
\leq 
   \vpran{\big(6 + \tfrac{4D_1}{D_0}\big) \, C_1(\tilde r)\norm{u_o}_{L^1_{x,\theta}}}^{\frac{1}{1-\mu_2}}
   T^{\frac{1-\mu_2}{1-\mu_1}} \,.
\end{align}
Using~\eqref{bound:U-0-3} in~\eqref{def:M-T}, we derive that
\begin{align*}
   u(T, x, \theta) 
\leq M
\leq c_9 \, T^{-\mu_1 + \frac{1-\mu_1}{1-\mu_2} \mu_2}
= c_9 \, T^{-\frac{\mu_1 - \mu_2}{1 - \mu_2}} \,,
\end{align*}
where the coefficient $c_9$ is given by 
\begin{align*}
   c_9 = C_1(\tilde r) \vpran{(6 + \tfrac{4D_1}{D_0}) \, C_1(\tilde r)\norm{u_o}_{L^1_{x,\theta}}}^{\frac{\mu_2}{1-\mu_2}} \,.
\end{align*} 
Note that by \eqref{bound:mu-2} it follows in particular that
\begin{align*}
    u(T, x, \theta) \leq c_9\, T^{-\frac{1}{2}} \,.
\end{align*}
This proves the algebraic decay of $\norm{u(T, \cdot, \cdot)}_{L^\infty_{x, \theta}}$ as $T \to \infty$.
\end{proof}

\begin{appendices}
\section{Appendix}
\subsection{$L^{2}$ estimate of the scattering operator}
\begin{Lemma}\label{app:coercive}
Let $\theta' \rightarrow b(\theta\cdot\theta')\in L^{1}(\mathbb{S}^{d-1})$.  Then
\begin{equation*}
 \|\mathcal{I}_{b}(u)\|_{L^{2}(\mathbb{S}^{d-1})} \leq 2\|b\|_{L^{1}(\mathbb{S}^{d-1})}\, \|u\|_{L^{2}(\mathbb{S}^{d-1})}\,.
\end{equation*}
\end{Lemma}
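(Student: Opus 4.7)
The plan is to split $\mathcal{I}_b(u)$ into its gain and loss parts and estimate each in $L^2(\mathbb{S}^{d-1})$ by the same constant $\|b\|_{L^1(\mathbb{S}^{d-1})} \|u\|_{L^2(\mathbb{S}^{d-1})}$. Writing
\begin{equation*}
   \mathcal{I}_b(u)(\theta)
   = \int_{\mathbb{S}^{d-1}} u(\theta')\,b(\theta\cdot\theta')\,\dtheta'
      - u(\theta) \int_{\mathbb{S}^{d-1}} b(\theta\cdot\theta')\,\dtheta' \,,
\end{equation*}
the triangle inequality in $L^2(\mathbb{S}^{d-1})$ reduces the problem to bounding the two terms separately.

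For the loss term, the key observation is that by the rotational invariance of the surface measure on $\mathbb{S}^{d-1}$, the map $\theta \mapsto \int_{\mathbb{S}^{d-1}} b(\theta\cdot\theta')\,\dtheta'$ is constant and equals $\|b\|_{L^{1}(\mathbb{S}^{d-1})}$ (up to the sign convention implicit in the assumption, which in our context follows from $b\geq 0$). Thus the loss term contributes exactly $\|b\|_{L^1(\mathbb{S}^{d-1})} \|u\|_{L^{2}(\mathbb{S}^{d-1})}$.

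The gain term is handled by a standard Young--type estimate. Using Cauchy--Schwarz pointwise in $\theta$ against the measure $b(\theta\cdot\theta')\,\dtheta'$,
\begin{equation*}
   \left|\int_{\mathbb{S}^{d-1}} u(\theta')\,b(\theta\cdot\theta')\,\dtheta'\right|^{2}
   \leq
   \left(\int_{\mathbb{S}^{d-1}} b(\theta\cdot\theta')\,\dtheta'\right)
   \left(\int_{\mathbb{S}^{d-1}} |u(\theta')|^{2}\,b(\theta\cdot\theta')\,\dtheta'\right) ,
\end{equation*}
and then integrating in $\theta$ and using rotational invariance once more in the inner integral gives a contribution of at most $\|b\|_{L^1(\mathbb{S}^{d-1})} \|u\|_{L^2(\mathbb{S}^{d-1})}$. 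Adding the two bounds yields the factor $2$ in the conclusion.

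There is no substantive obstacle: the only point that requires a small remark is the rotational invariance that lets us treat $\int b(\theta\cdot\theta')\,\dtheta'$ as the constant $\|b\|_{L^1(\mathbb{S}^{d-1})}$, independent of $\theta$. Everything else is Cauchy--Schwarz plus Fubini.
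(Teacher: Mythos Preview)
Your proof is correct and follows essentially the same approach as the paper: split $\mathcal{I}_b(u)$ into gain and loss, bound the gain term via Cauchy--Schwarz against the measure $|b(\theta\cdot\theta')|\,\dtheta'$ together with rotational invariance, and bound the loss term trivially. The paper's written proof displays only the gain-term computation, but the underlying argument is the same as yours.
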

\begin{proof}
This follows by using Cauchy-Schwarz inequality
\begin{align*}
& \quad \,
       \int_{\Ss^{d-1}} 
         \abs{\int_{\Ss^{d-1}}u(\theta')\,b(\theta\cdot\theta')\,\dtheta'}^{2}\,\dtheta
\\[2pt]
&\leq
   \int_{\mathbb{S}^{d-1}}\Big(\int_{\mathbb{S}^{d-1}}\big|u(\theta')\big|^{2}\,\big|b(\theta\cdot\theta')\big| \,\dtheta' \Big)\,\Big(\int_{\mathbb{S}^{d-1}}\,\big|b(\theta\cdot\theta')\big|\,\dtheta' \Big)\,\dtheta
\\[2pt]
&= \|b\|^{2}_{L^{1}(\mathbb{S}^{d-1})}\, \|u\|^{2}_{L^{2}(\mathbb{S}^{d-1})}\,.
\end{align*}
\end{proof}
\subsection{Fractional Laplacian}
Let $f \in L^{1}(\mathbb{R}^{d-1})$ be a sufficiently smooth function.  Then, the $s$-Fractional Laplacian operator $(-\Delta_v)^{s}$ is defined through the relation in the Fourier space
\begin{equation}\label{FLF}
      \mathcal{F}\big\{(-\Delta_v)^{s}f\big\}(\xi) 
   = |\xi|^{2s}\,\mathcal{F}\{f\}(\xi)\,, 
\quad s \in (0,1)\,.
\end{equation}
It is not difficult to prove that this definition is equivalent to the singular integral relation
\begin{equation}\label{FL}
     (-\Delta_v)^{s}f(v) 
  = c_{d,s} \int_{\R^{d-1}}\frac{f(v) - f(v+z)}{|z|^{d-1+2s}}\,\dz\,,
\end{equation}
where the constant is given by
\begin{equation}\label{Ac}
\frac{1}{c_{d,s}} = \int_{\mathbb{R}^{d}}\frac{1 - e^{-i\hat{\xi}\cdot z}}{|z|^{d+2s}}\,\dz>0 \,.
\end{equation}
\subsection{Bessel Potentials} \label{app:Bessel-potential}
\begin{Lemma}\label{app:convex}
The following  relation holds for any $s\in(0,1)$
\begin{equation}\label{app:bessel2}
     (-\Delta_v)^{s}\frac{1}{\langle v \rangle^{d-1-2s}}
=  \frac{c_{d,s}}{\langle v \rangle^{d-1+2s}}\,,
\end{equation}
with constant
\begin{equation*}
c_{d,s}=2^{2s}\,\frac{\Gamma\big(\frac{d-1}{2}+s\big)}{\Gamma\big(\frac{d-1}{2}-s\big)}\,.
\end{equation*}
\end{Lemma}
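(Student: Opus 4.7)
I would prove the identity by passing to the Fourier side. Let $n=d-1$ so that $v\in\R^{n}$, and write $\alpha_{\pm}=(n\pm 2s)/2$. The aim is to compute $\mathcal{F}\{\langle v\rangle^{-(n-2s)}\}$ and $\mathcal{F}\{\langle v\rangle^{-(n+2s)}\}$ explicitly, observe that the second is obtained from the first by multiplication with $|\xi|^{2s}$ up to an explicit constant, and then invoke the Fourier characterization~\eqref{FLF} of $(-\Delta_v)^{s}$ to conclude.

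The starting point is the Gamma integral representation
\begin{equation*}
   \langle v\rangle^{-2\alpha}=\frac{1}{\Gamma(\alpha)}\int_{0}^{\infty}e^{-(1+|v|^{2})t}\,t^{\alpha-1}\dt,
\qquad \alpha>0,
\end{equation*}
valid for any $\alpha>0$. Applied with $\alpha=\alpha_{\pm}$, Fubini/Tonelli (legal because the Gaussian is in $L^{1}$ uniformly in $t$ and the $t$-integral converges at both ends for $0<s<n/2$) combined with the standard Gaussian Fourier identity
\begin{equation*}
\mathcal{F}\{e^{-t|v|^{2}}\}(\xi)=(\pi/t)^{n/2}e^{-|\xi|^{2}/(4t)}
\end{equation*}
reduces the Fourier transforms to one-dimensional integrals in $t$.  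Substituting and relabeling $t\mapsto t$, these integrals match the classical integral representation of the modified Bessel function,
\begin{equation*}
\int_{0}^{\infty}e^{-t-z^{2}/(4t)}\,t^{-\nu-1}\dt=2(z/2)^{-\nu}K_{\nu}(z),
\qquad z>0,\ \nu\in\R,
\end{equation*}
with $\nu=\pm s$ and $z=|\xi|$.  Using $K_{-s}=K_{s}$, I arrive at the closed forms
\begin{equation*}
\mathcal{F}\{\langle v\rangle^{-(n-2s)}\}(\xi)=\frac{2\pi^{n/2}}{\Gamma(\tfrac{n-2s}{2})}\,(|\xi|/2)^{-s}K_{s}(|\xi|),
\quad
\mathcal{F}\{\langle v\rangle^{-(n+2s)}\}(\xi)=\frac{2\pi^{n/2}}{\Gamma(\tfrac{n+2s}{2})}\,(|\xi|/2)^{s}K_{s}(|\xi|).
\end{equation*}

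Multiplying the first identity by $|\xi|^{2s}$ and using $|\xi|^{2s}(|\xi|/2)^{-s}=2^{2s}(|\xi|/2)^{s}$ gives
\begin{equation*}
|\xi|^{2s}\mathcal{F}\{\langle v\rangle^{-(n-2s)}\}(\xi)=2^{2s}\,\frac{\Gamma(\tfrac{n+2s}{2})}{\Gamma(\tfrac{n-2s}{2})}\,\mathcal{F}\{\langle v\rangle^{-(n+2s)}\}(\xi).
\end{equation*}
By definition~\eqref{FLF}, the left-hand side is $\mathcal{F}\{(-\Delta_{v})^{s}\langle v\rangle^{-(n-2s)}\}$, so Fourier uniqueness yields~\eqref{app:bessel2} with $c_{d,s}=2^{2s}\Gamma(\tfrac{d-1}{2}+s)/\Gamma(\tfrac{d-1}{2}-s)$ after substituting $n=d-1$.

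The main technical point is the justification of Fubini at the two places where the $t$-integral is swapped with an $L^{1}$ Fourier transform; for $0<s<\min\{1,(d-1)/2\}$ both Gamma integrals converge absolutely and the exponentials provide uniform decay, so this is standard but must be checked. A secondary bookkeeping obstacle is the normalization of the Fourier transform: one must use the convention in which $\mathcal{F}\{-\Delta_{v} f\}=|\xi|^{2}\hat f$ (as implicit in~\eqref{FLF}) in order to obtain the factor $2^{2s}$ rather than $(2\pi)^{2s}$, and this is the convention consistent with the rest of the paper.
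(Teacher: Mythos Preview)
Your proof is correct and follows essentially the same route as the paper: both arguments pass to the Fourier side, express $\mathcal{F}\{\langle v\rangle^{-(d-1\mp 2s)}\}$ in terms of $|\xi|^{\mp s}K_{\pm s}(|\xi|)$, invoke the symmetry $K_{-s}=K_{s}$, and read off the constant. The only difference is that the paper quotes the Fourier transform of the Bessel potential from \cite{AS}, whereas you derive it from scratch via the Gamma integral and the Gaussian Fourier transform --- a minor expository choice, not a different strategy.
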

\begin{proof}
The Bessel Potential $B_{\alpha}(v)=\frac{1}{(2\pi)^{d-1}}\frac{1}{\langle v\rangle^{\alpha}}$, with $v\in\mathbb{R}^{d-1}$, has Fourier transform
\begin{equation}\label{app:bessel1}
    \mathcal{F}\{ B_{\alpha} \}(\xi) 
    = \frac{1}{ 2^{\frac{d+\alpha-3}{2} } \pi^{(d-1)/2} \Gamma(\frac{\alpha}{2}) } \,
       K_{ \frac{d-1-\alpha}{2} }\big(|\xi|\big) \, |\xi|^{\frac{\alpha-d+1}{2}}\,,
\end{equation}
where $K_{\nu}(z)$ is the modified Bessel function of the third kind of order $\nu\in\mathbb{R}$, refer to \cite[Section 3 and 4]{AS} for a short discussion on Bessel potentials and their Fourier transform.  We compute the left side of \eqref{app:bessel2} using \eqref{app:bessel1} with $\alpha=(d-1) - 2s$
\begin{equation}\label{app:e1bessel}
\mathcal{F}  \Big\{ (-\Delta_v)^{s}\frac{1}{\langle v \rangle^{d-1-2s}} \Big\}(\xi)  = |\xi|^{2s}\,\mathcal{F}\{ B_{d-1-2s} \}(\xi)
 = c^{1}_{d,s}\,|\xi|^{s}\,K_{s}(|\xi|). 
 \end{equation}
Furthermore, the right side of \eqref{app:bessel2} can be computed also with the same formula and $\alpha=(d-1) + 2s$
\begin{equation}\label{app:e2bessel}
\mathcal{F}  \Big\{\frac{1}{\langle v \rangle^{d-1+ 2s}} \Big\}(\xi)  =  c^{2}_{d,s}\,|\xi|^{s}\,K_{-s}(|\xi|) = c^{2}_{d,s}\,|\xi|^{s}\,K_{s}(|\xi|)\,, 
\end{equation}
where the last equality follows from the fact that $K_{\nu} = K_{-\nu}$ for any $\nu\in\mathbb{R}$.  Thus, from \eqref{app:e1bessel} and \eqref{app:e2bessel} it follows that
\begin{equation*}
\mathcal{F}  \Big\{ (-\Delta_v)^{s}\frac{1}{\langle v \rangle^{d-1-2s}} \Big\}(\xi)  = \frac{c^{1}_{d,s}}{c^{2}_{d,s}}\, \mathcal{F}  \Big\{\frac{1}{\langle v \rangle^{d-1+ 2s}} \Big\}(\xi)\,,
\end{equation*}
and the result follows after taking the inverse Fourier transform and calculating the constants.
\end{proof}
\subsection{Convergence of the operator $(-\Delta_{v})^{s}_{g}$}
\begin{Lemma}\label{app:Dapproxconv}
Let $\psi\in \mathcal{C}^{2}(\mathbb{R}^{d-1})$.  Then, for $g\lesssim1$ there exists an explicit $\beta_{s}>0$ only dependent on $s\in(0,1)$ such that for any $\epsilon\in(0,1)$
\begin{equation}\label{app:Dapprox:est1}
\Big|(-\Delta_{v})^{s}_{g}\psi - (-\Delta_{v})^{s}\psi\Big|\leq C_{d,s}\|\psi\|_{\mathcal{C}^{2}}(g-1)^{\epsilon\beta_{s}}\langle v \rangle^{\epsilon}\,,
\end{equation}
with the constant $C_{d,s}$ independent of $g$.  Moreover, if $\psi\in \mathcal{C}^{2}_{c}(\mathbb{R}^{d-1})$ previous estimate upgrades to
\begin{equation}\label{app:Dapprox:est2}
\Big|(-\Delta_{v})^{s}_{g}\psi - (-\Delta_{v})^{s}\psi\Big|\leq C_{d,s}\big|\text{supp}(\psi)\big|\|\psi\|_{\mathcal{C}^{2}}(g-1)^{\beta_{s}}\,,
\end{equation}
with explicit decay
\begin{equation}\label{app:Dapprox:est3}
\Big|(-\Delta_{v})^{s}_{g}\psi\Big| \leq C_{d,s}\frac{\big|\text{supp}(\psi)\big|\|\psi\|_{\mathcal{C}^{2}}}{\langle v \rangle^{d-1+2s}}\,.
\end{equation}
\end{Lemma}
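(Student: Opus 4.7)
The starting point is to write the difference
\begin{equation*}
    (-\Delta_{v})^{s}_{g}\psi(v) - (-\Delta_{v})^{s}\psi(v)
  = \int_{\mathbb{R}^{d-1}} \bigl(\psi(v) - \psi(v+z)\bigr) \,
       \mathcal{K}_g(v,z) \, \dz,
\end{equation*}
where $\mathcal{K}_g(v,z) := \frac{1}{\delta_g(v,v+z)} - \frac{c_{d,s}}{|z|^{d-1+2s}}$ is the discrepancy between the two kernels. The plan is to split the integration domain at a scale $r = r(g,v)$ to be optimized at the end, treating the region $|z|\leq r$ via Taylor expansion of $\psi$ and the region $|z|>r$ via a pointwise estimate on $\mathcal{K}_g$.

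For the near region, since both kernels are even in $z\mapsto -z$ (by inspection of $\delta_g$), the symmetrization trick lets us replace $\psi(v)-\psi(v+z)$ by its even Taylor residue $-\tfrac{1}{2}(z\cdot\nabla)^2\psi(v) + O(|z|^2)$, which is bounded by $\|\psi\|_{\mathcal{C}^2}|z|^2$. From the elementary lower bound $\delta_g(v,v+z)\geq \tfrac{(4g)^{(d-1)/2+s}}{1+g}|z|^{d-1+2s}$ uniform in $g\in[1/2,1]$, both $|\mathcal{K}_g(v,z)|\cdot|z|^{d-1+2s}$ and $c_{d,s}$ stay comparable, so the near piece is controlled by $C_{d,s}\|\psi\|_{\mathcal{C}^{2}}\int_{|z|\leq r}\!|z|^{2-(d-1)-2s}\dz \lesssim r^{\,2-2s}\,\|\psi\|_{\mathcal{C}^{2}}$, independently of $v$ and $g$.

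For the far region, the idea is to Taylor-expand in the small parameter $\mu := (1-g)^2\langle v\rangle^2\langle v+z\rangle^2/(4g|z|^2)$, writing
\begin{equation*}
     \tfrac{1}{\delta_g(v,v+z)}
  = \tfrac{(1+g)}{(4g)^{(d-1)/2+s}|z|^{d-1+2s}}\,(1+\mu)^{-(d-1)/2-s},
\end{equation*}
and comparing this with $c_{d,s}/|z|^{d-1+2s}$. The leading prefactor produces an $O(1-g)$ error after using $(1+g)(4g)^{-(d-1)/2-s}\to c_{d,s}$ (which is how $c_{d,s}$ must be fixed here), while the expansion of $(1+\mu)^{-(d-1)/2-s}$ contributes an error of order $\mu\lesssim (1-g)^2\langle v\rangle^2\langle v+z\rangle^2/|z|^2$. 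Multiplying by $|\psi(v)-\psi(v+z)|\leq 2\|\psi\|_{\infty}$ and integrating over $|z|>r$ (where $|\psi(v)-\psi(v+z)|\lesssim\|\psi\|_{\mathcal{C}^{1}}|z|\wedge\|\psi\|_\infty$ near the origin and the $\langle v+z\rangle^{2}$ factor is absorbed into $|z|^{-(d-1+2s)}\langle v+z\rangle^{2}$, which is integrable for $s\in(0,1)$), one obtains a far-region bound of the form $C_{d,s}\|\psi\|_{\mathcal{C}^{2}}(1-g)^{2}\langle v\rangle^{2}\, r^{\,-\alpha}$ for a concrete $\alpha>0$ depending on $d,s$. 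Balancing the two contributions by choosing $r$ so that $r^{2-2s}\simeq (1-g)^{2}\langle v\rangle^{2}r^{-\alpha}$ yields \eqref{app:Dapprox:est1} with an explicit $\beta_{s}>0$ and an extra $\langle v\rangle^{\epsilon}$ that arises from raising the estimate to a small power $\epsilon\in(0,1)$, since the crude bound $|(-\Delta_v)^s_g\psi - (-\Delta_v)^s \psi|\leq C_{d,s}\|\psi\|_{\mathcal{C}^2}\langle v\rangle$ is available and can be interpolated against the $v$-uniform small-$(1-g)$ bound obtained for $|v|$ bounded.

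For a compactly supported $\psi$, the factor $\langle v\rangle^{\epsilon}$ disappears because integration is restricted to $v+z\in\mathrm{supp}(\psi)$, which forces either $|z|\geq c$ (using a bounded far region) or $v$ to stay bounded; redoing the above argument with these localizations gives \eqref{app:Dapprox:est2}. Finally, \eqref{app:Dapprox:est3} is a direct consequence of the same lower bound on $\delta_g$: for $v$ outside $\mathrm{supp}(\psi)$ one has $\delta_g(v,v')\gtrsim \langle v\rangle^{d-1+2s}$ uniformly in $v'\in\mathrm{supp}(\psi)$, so $|(-\Delta_v)^s_g\psi(v)|\leq \|\psi\|_\infty|\mathrm{supp}(\psi)|/\delta_g \lesssim |\mathrm{supp}(\psi)|\|\psi\|_{\infty}/\langle v\rangle^{d-1+2s}$. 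The main technical obstacle is the bookkeeping in the far region, where the interaction between the polynomial growth $\langle v\rangle\langle v+z\rangle$ in $\delta_g$ and the required uniform-in-$v$ integrability of $\mathcal{K}_g$ forces the introduction of the interpolation parameter $\epsilon$ in \eqref{app:Dapprox:est1}.
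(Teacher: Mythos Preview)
Your proposal contains a genuine gap at its very first step. You assert that ``both kernels are even in $z\mapsto -z$ (by inspection of $\delta_g$)'', and this is what lets you pass directly to the second-difference form and use only the Taylor residue $O(|z|^2)$. But $\delta_g(v,v+z)=(1+g)^{-1}\big((g-1)^2\langle v\rangle^2\langle v+z\rangle^2+4g|z|^2\big)^{\frac{d-1}{2}+s}$ is \emph{not} even in $z$: the factor $\langle v+z\rangle^2$ differs from $\langle v-z\rangle^2$ by $4v\cdot z$. The paper's proof confronts this head-on: it writes $(-\Delta_v)^s_g\psi$ as a symmetric second-difference piece \emph{plus} an antisymmetric remainder
\[
\tfrac{1}{2}\int_{\mathbb{R}^{d-1}}\big(\psi(v-z)-\psi(v+z)\big)\,\frac{\delta_g(v,v+z)-\delta_g(v,v-z)}{\delta_g(v,v+z)\,\delta_g(v,v-z)}\,\dz,
\]
and the bulk of the work (estimates \eqref{e3.3:app:Dapprox}--\eqref{e3.6:app:Dapprox}) is controlling this remainder, using $|\delta_g(v,v+z)-\delta_g(v,v-z)|\lesssim (1-g)^2\langle v\rangle^2|v\cdot z|\,(\ldots)$ together with a splitting in $|v|$ and $|z|$. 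Your sketch skips this entirely.

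There is a second problem in your far-region step: you say the error $\mu\lesssim(1-g)^2\langle v\rangle^2\langle v+z\rangle^2/|z|^2$ can be integrated against $|z|^{-(d-1+2s)}$ because ``$|z|^{-(d-1+2s)}\langle v+z\rangle^{2}$ is integrable for $s\in(0,1)$''. It is not: for large $|z|$ this behaves like $|z|^{-(d-3+2s)}$, which diverges at infinity whenever $2s\leq 2$, i.e.\ always. The paper avoids this by never letting the full $\langle v+z\rangle^2$ escape into the integrand; instead it interpolates via \eqref{e3.1:app:Dapprox}, trading a power of $(g-1)$ for a power of $|z|$ through the lower bound $\delta_g(v,v\pm z)^{1/\alpha}\geq(g-1)^{2-2(s'-s)}\langle v\rangle^{2-2(s'-s)}|z|^{2(s'-s)}$ with a free parameter $s'\in(s,1)$. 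This is precisely what generates the exponent $\beta_s$ and the growth $\langle v\rangle^\epsilon$ in a controlled way. Your argument for \eqref{app:Dapprox:est3} is essentially correct, but the route to \eqref{app:Dapprox:est1} needs the asymmetric term and the interpolation device just described.
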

\begin{proof}
Fix any $g\in[\frac{1}{2},1)$ and recall that we introduced the notation
\begin{equation*}
\delta_{g}(v,v') = (1+g)^{-1}\big((g-1)^{2}\langle v \rangle^{2} \langle v' \rangle^{2} + 4g|v'-v|^{2} \big)^{\frac{d-1}{2}+s} \,.
\end{equation*}
Thus,
\begin{align}\label{e1:app:Dapprox}
(-\Delta_{v})^{s}_{g}\psi(v) &= \int_{\mathbb{R}^{d-1}}\frac{\psi(v+z) - \psi(v)}{\delta_{g}(v,v+z)}\dz\nonumber\\
&\hspace{-1cm}=\tfrac{1}{2}\int_{\mathbb{R}^{d-1}}\frac{\psi(v+z)+\psi(v-z) - 2\psi(v)}{\delta_{g}(v,v+z)}\dz
\nonumber\\
&+\tfrac{1}{2}\int_{\mathbb{R}^{d-1}}\big(\psi(v-z) - \psi(v+z)\big)\frac{\delta_{g}(v,v+z) - \delta_{g}(v,v-z)}{\delta_{g}(v,v+z)\,\delta_{g}(v,v-z)}\dz\,.
\end{align}
We first prove that the second integral on the right side of \eqref{e1:app:Dapprox} goes to zero uniformly.  Indeed, using the inequality
\begin{equation}\label{e2.9:app:Dapprox}
\big|x^{\alpha} - y^{\alpha} \big|\leq \max\{1,\alpha/2\}\big(x^{\alpha-1} + y^{\alpha-1}\big)\,|x-y|\,,\quad \alpha\geq1\,,\;x,y\geq0\,,
\end{equation}
with $\alpha=\frac{d-1}{2}+s$, it follows that
\begin{align*}
\big|\delta_{g}(v,v+z) &- \delta_{g}(v,v-z)\big|\\
&\leq\frac{2\alpha}{1+g}(1-g)^{2}\langle v \rangle^{2}\big(\delta_{g}(v,v+z)^{1-1/\alpha} + \delta_{g}(v,v-z)^{1-1/\alpha}\big)\big|\langle v+z \rangle^{2} - \langle v-z \rangle^{2} \big|\\
&=\frac{4\alpha}{1+g}(1-g)^{2}\langle v \rangle^{2}\big(\delta_{g}(v,v+z)^{1-1/\alpha} + \delta_{g}(v,v-z)^{1-1/\alpha}\big)\big|v\cdot z \big| \,.
\end{align*}
As a consequence, 
\begin{align}\label{e3:app:Dapprox}
\frac{\delta_{g}(v,v+z) - \delta(v,v-z)}{\delta_{g}(v,v+z)\,\delta_{g}(v,v-z)}\leq &4\alpha(1+g)(1-g)^{2}\langle v \rangle^{2}\Bigg(\frac{1}{\delta_{g}(v,v+z)^{1/\alpha}\delta_{g}(v,v-z)}\nonumber\\
&\hspace{1cm} + \frac{1}{\delta_{g}(v,v-z)^{1/\alpha}\delta_{g}(v,v+z)}\Bigg)\big|v\cdot z \big|\,.
\end{align}
Furthermore, for any $v,\,z\in\mathbb{R}^{d-1}$ it follows that
\begin{align}\label{e3.1:app:Dapprox}
\delta_{g}(v,v\pm z)^{1/\alpha} &= \delta_{g}(v,v\pm z)^{(1-(s'-s))/\alpha}\delta_{g}(v,v\pm z)^{(s'-s)/\alpha}\nonumber\\
&\geq(g-1)^{2-2(s'-s)} \langle v \rangle^{2-2(s'-s)}\,|z|^{2(s'-s)}\,,\quad s'\in[s,1)\,.
\end{align}
The parameter $s'$ will be chosen in the sequel.  Using \eqref{e3.1:app:Dapprox} in \eqref{e3:app:Dapprox} it follows that
\begin{equation}\label{e3.2:app:Dapprox}
\frac{\delta_{g}(v,v+z) - \delta(v,v-z)}{\delta_{g}(v,v+z)\,\delta_{g}(v,v-z)}\leq C_{d,s}(1-g)^{2(s'-s)}\langle v \rangle^{2(s'-s)+1}\big|z\big|^{-(d-1)-2s'+1}\,,
\end{equation}
valid for any $v,\,z\in\mathbb{R}^{d-1}$.  Introduce a large radius $R\gg1$, then in the set $\{|v| \leq R\}$ one has directly from \eqref{e3.2:app:Dapprox}
\begin{align}\label{e3.3:app:Dapprox}
\Bigg|\int_{\mathbb{R}^{d-1}}\big(\psi(v-z) - \psi(v+z)\big)&\frac{\delta_{g}(v,v+z) - \delta_{g}(v,v-z)}{\delta_{g}(v,v+z)\,\delta_{g}(v,v-z)}\dz\Bigg| \nonumber\\
&\hspace{-3cm}\leq C_{d,s}(1-g)^{2(s'-s)}\langle v \rangle^{2(s'-s)+1}\int_{\mathbb{R}^{d-1}}\frac{\psi(v-z) - \psi(v+z)}{|z|^{d-1+2s'-1}}dz\nonumber\\
&\hspace{-4cm}\leq C_{d,s,s'}\,(1-g)^{2(s'-s)}\,R^{2(s'-s)+1}\,\big(\|\nabla_{v}\psi\|_{\infty} + \|\psi\|_{L^{\infty}}\big)\,,\quad \max\{s,\tfrac{1}{2}\}<s'<1\,.
\end{align}
For the last inequality, simply break the integral in the sets $\{|z|<1\}$ and $\{|z|\geq1\}$ and directly compute
\begin{align*}
\int_{\{|z|<1\}}\frac{\psi(v-z) - \psi(v+z)}{|z|^{d-1+2s'-1}}&dz \leq \|\nabla_{v}\psi\|_{\infty} \int_{\{|z|<1\}}\frac{1}{|z|^{d-1-2(1-s')}}dz = C_{d,s'}\|\nabla_{v}\psi\|_{\infty}\,,\\
\int_{\{|z|\geq1\}}\frac{\psi(v-z) - \psi(v+z)}{|z|^{d-1+2s'-1}}dz &\leq \|\psi\|_{\infty} \int_{\{|z|\geq1\}}\frac{1}{|z|^{d-1 + 2s' -1}}dz = C_{d,s'}\|\psi\|_{\infty}\,, \;\; s'>1/2\,.
\end{align*}
In the set $\{|v| > R\}$ break the integral in $\{|z| < |v|/2\}$ and $\{|z|\geq|v|/2\}$.  For the former, note that
\begin{equation*}
\delta_{g}(v,v\pm z)^{1/\alpha}\geq(g-1)^{2}\;\langle v \rangle^{2}\langle v \pm z  \rangle^{2}\geq \tfrac{1}{4}(g-1)^{2}\;\langle v \rangle^{4} \quad\text{whenever}\quad |z|\leq|v|/2\,,
\end{equation*}
therefore, using \eqref{e3:app:Dapprox} it holds that
\begin{align}\label{e3.4:app:Dapprox}
\Bigg|&\int_{\{|z|<|v|/2\}}\big(\psi(v-z) - \psi(v+z)\big)\frac{\delta_{g}(v,v+z) - \delta_{g}(v,v-z)}{\delta_{g}(v,v+z)\,\delta_{g}(v,v-z)}\dz\Bigg|\nonumber\\
&\leq 4\alpha\frac{(1+g)}{\langle v \rangle}\int_{\{|z|<|v|/2\}}\big(\psi(v-z) - \psi(v+z)\big)\frac{1}{|z|^{d-1+2s-1}}\dz\nonumber\\
&\leq C_{d,s}\frac{(1+g)}{\langle v \rangle}
\Bigg\{
\begin{array}{lc}
\|\nabla_{v}\psi\|_{\infty} + \max\{1,\langle v \rangle^{1-2s}\}\|\psi\|_{\infty} & \text{if}\;s\neq1/2\\
\|\nabla_{v}\psi\|_{\infty} + \ln\langle v \rangle\|\psi\|_{\infty}  & \text{if}\;s=1/2
\end{array}\nonumber\\
&\leq C_{d,s}\big(\|\nabla_{v}\psi\|_{\infty} + \|\psi\|_{\infty}\big)\frac{(1+g)}{R^{s}}\,.
\end{align}
As before, for the last inequality one breaks the integral in the sets $\{|z|<1\}$ and $\{1\leq|z|<|v|/2\}$ and directly computes the integrals.  For the latter one simply estimate
\begin{align}\label{e3.5:app:Dapprox}
\Bigg|\int_{\{|z|\geq|v|/2\}}\big(\psi(v-z) &- \psi(v+z)\big)\frac{\delta_{g}(v,v+z) - \delta_{g}(v,v-z)}{\delta_{g}(v,v+z)\,\delta_{g}(v,v-z)}\dz\Bigg|\nonumber\\
&\leq 2\|\psi\|_{\infty}\int_{|z|\geq|v|/2}\frac{1}{|z|^{d-1+2s}}\dz = C_{s}\,\frac{\|\psi\|_{\infty}}{\langle v \rangle^{2s}}\leq C_{s}\,\frac{\|\psi\|_{\infty}}{R^{2s}}\,.
\end{align}
Gathering \eqref{e3.3:app:Dapprox}, \eqref{e3.4:app:Dapprox} and \eqref{e3.5:app:Dapprox}
\begin{align}\label{e3.6:app:Dapprox}
\Bigg|\int_{\{|z|\geq|v|/2\}}\big(\psi(v-z) &- \psi(v+z)\big)\frac{\delta_{g}(v,v+z) - \delta_{g}(v,v-z)}{\delta_{g}(v,v+z)\,\delta_{g}(v,v-z)}\dz\Bigg|\nonumber\\
&\hspace{-1.5cm}\leq C_{d,s,s'}\big(\|\nabla_{v}\psi\|_{\infty} + \|\psi\|_{\infty}\big)\times\nonumber\\
&\Big((g-1)^{2(s'-s)}R^{2(s'-s)+1}+\frac{1}{R^{s}}\Big)\,,\quad \max\{s,\tfrac{1}{2}\}<s'<1\nonumber\\
&\leq C_{d,s,s'}\|\psi\|_{\mathcal{C}^{1}} (g-1)^{\beta_{1}}\,, \quad \beta_{1}=\frac{2(s'-s)s}{2(s'-s)+1+s}\,.
\end{align}
The rate $\beta_{1}$ is  obtained by choosing $R>0$ (large) such that $(g-1)^{2(s'-s)}R^{2(s'-s)+1}=1/R^{s}$.\\

\noindent
Let us now focus in the first integral on the right side of \eqref{e1:app:Dapprox}.  The procedure is similar as before, first introducing a radius $R>0$ (not necessarily large this time) and considering the region $\{|v|\leq R\}$.  Then, inequality \eqref{e2.9:app:Dapprox} and some direct computations leads to (recall that $\alpha=\frac{d-1}{2}+s$)
\begin{align}\label{e3.7:app:Dapprox}
\tfrac{1}{2}\Bigg|&\int_{\mathbb{R}^{d-1}}\big(\psi(v+z)+\psi(v-z) - 2\psi(v)\big)\Bigg(\frac{1}{\delta_{g}(v,v+z)}-\frac{1+g}{(4g|z|^{2})^{\alpha}}\Bigg)\dz\Bigg|\nonumber\\
&\leq\big\|\partial^{2}_{v}\psi\|_{\infty}\big((g-1)R^{2}\big)^{2(1-s')}\int_{\{|z|\leq1\}}\frac{1}{|z|^{d-1-2(s'-s)}}\dz\nonumber\\
&\hspace{.5cm}+\|\psi\|_{\infty}\big((g-1)R^{2}\big)^{2(1-s')}\int_{\{|z|\geq1\}}\frac{1}{|z|^{d-1+2s}}\dz\nonumber\\
&\hspace{1cm}\leq C_{d,s,s'}\|\psi\|_{\mathcal{C}^{2}}\big((g-1)R^{2}\big)^{2(1-s')}\,,\quad \max\{s,\tfrac{1}{2}\}<s'<1\,.
\end{align}
In the set $\{|v|>R\}$ simply use the rough estimate for any $\epsilon\in(0,1)$
\begin{equation*}
\Bigg|\frac{1}{\delta_{g}(v,v+z)}-\frac{1+g}{(4g|z|^{2})^{\alpha}}\Bigg|\leq \frac{2(1+g)}{(4g|z|^{2})^{\alpha}}\leq \frac{\langle v \rangle^{\epsilon}}{R^{\epsilon}}\frac{2(1+g)}{(4g|z|^{2})^{\alpha}}\,,
\end{equation*}
to conclude that
\begin{align}\label{e3.8:app:Dapprox}
\tfrac{1}{2}\Bigg|\int_{\mathbb{R}^{d-1}}&\big(\psi(v+z)+\psi(v-z) - 2\psi(v)\big)\Bigg(\frac{1}{\delta_{g}(v,v+z)}-\frac{1+g}{(4g|z|^{2})^{\alpha}}\Bigg)\dz\Bigg| \nonumber\\
&\leq C\, \frac{\langle v \rangle^{\epsilon}}{R^{\epsilon}}\Bigg(\|\partial^{2}_{v}\psi\|_{\infty}\int_{\{|z|<1\}}\frac{1}{|z|^{d-1- 2(1-s)}}\dz + \|\psi\|_{\infty}\int_{\{|z|\geq1\}}\frac{1}{|z|^{d-1+2s}}\dz\Bigg)\nonumber\\
&\leq C_{d,s}\|\psi\|_{\mathcal{C}^{2}}\frac{\langle v \rangle^{\epsilon}}{R^{\epsilon}}\,.
\end{align}
Thus, gathering \eqref{e3.7:app:Dapprox} and \eqref{e3.8:app:Dapprox} it follows that
\begin{align}\label{e3.9:app:Dapprox}
\tfrac{1}{2}\Bigg|\int_{\mathbb{R}^{d-1}}&\big(\psi(v+z)+\psi(v-z) - 2\psi(v)\big)\Bigg(\frac{1}{\delta_{g}(v,v+z)}-\frac{1+g}{(4g|z|^{2})^{\alpha}}\Bigg)\dz\Bigg|\nonumber\\
&\leq C_{d,s,s'}\|\psi\|_{\mathcal{C}^{2}}\Big(\big((g-1)R^{2}\big)^{2(1-s')}+ \frac{\langle v \rangle^{\epsilon}}{R^{\epsilon}}\Big)\nonumber\\
&\hspace{1.5cm}\leq  C_{d,s,s'}\|\psi\|_{\mathcal{C}^{2}}(g-1)^{\epsilon\beta_{2}}\langle v \rangle^{2\epsilon\beta_{2}}\,,\quad \beta_{2}=\frac{2(1-s')}{4(1-s')+\epsilon}\,.
\end{align}
The last inequality follows choosing $R>0$ such that $\big((g-1)R^{2}\big)^{2(1-s')}=\frac{\langle v \rangle^{\epsilon}}{R^{\epsilon}}$.  Since $\frac{2(1-s')}{4(1-s')+1}\leq\beta_{2}\leq \frac{1}{2}$ estimate \eqref{app:Dapprox:est1} follows from \eqref{e3.6:app:Dapprox} and \eqref{e3.9:app:Dapprox} after choosing $\max\{s,\tfrac{1}{2}\}<s'<1$.\\

\noindent
Finally, estimates \eqref{app:Dapprox:est2} and \eqref{app:Dapprox:est3} follow from the fact that the all integrals vanish in the regions $\{|z|\leq |v|/2\}$ whenever $|v|\geq 2\,\text{diam}\big(\text{supp}(\psi)\big)$.  For instance
\begin{align*}
\Big|(-\Delta_{v})^{s}_{g}\psi(v)\Big| &= \Big|\int_{\mathbb{R}^{d-1}}\frac{\psi(v+z) - \psi(v)}{\delta_{g}(v,v+z)}\dz\Big| \nonumber\\
&=  \Big|\int_{|z|\geq|v|/2}\frac{\psi(v+z)}{\delta_{g}(v,v+z)}\dz\Big|\leq C_{d,s}\frac{\big|\text{supp}(\psi)\big|\|\psi\|_{\infty}}{|v|^{d-1+2s}}\,.
\end{align*}
Meanwhile, in the region $|v|\leq 2\,\text{diam}\big(\text{supp}(\psi)\big)$ one clearly has
\begin{equation*}
\big|(-\Delta_{v})^{s}_{g}\psi(v)\big|\leq C_{d,s}\big|\text{supp}(\psi)\big|\|\psi\|_{\mathcal{C}^{2}}\,.
\end{equation*}
\end{proof}

\end{appendices}

\subsection*{Acknowledgements.} R. Alonso thanks the Conselho Nacional de Desenvolvimento Cient\'{i}fico e Tecnol\'{o}gico (CNPq) - Brazil.  He also thanks the hospitality of W. Sun and the Department of Mathematics at Simon Fraser University. The research of W.Sun was supported in part by the
  Simon Fraser University President's Research Start-up Grant PRSG-877723   and NSERC Discovery Individual Grant \#611626.


\begin{thebibliography}{99}

\bibitem{AS}
\textsc{Aronszajn, N. \& Smith, K. T.}  Theory of Bessel potentials. I, \textit{Annales de l'institut Fourier}, \textbf{11} (1961), 385--475.

\bibitem{Bal}
\textsc{Bal, G.}  Inverse transport theory and applications. \textit{Inverse Problems}, \textbf{25} (2009), 053001.

\bibitem{CH}
\textsc{Chen, Y. \& He, L.} Smoothing estimates for Boltzmann equation
with full-range Interactions: Spatially inhomogeneous case. \textit{Arch. Rational Mech. Anal}, \textbf{203} (2012), 343--377.

\bibitem{FB}
\textsc{Bouchut, F.}  Hypoelliptic regularity in kinetic equations. \textit{J. Math. Pures Appl.}, \textbf{81} (2002), 1135--1159.

\bibitem{FD}
\textsc{Bouchut, F. \& Desvillettes, L.} Averaging lemmas without time Fourier transform and application to discretized kinetic equations.  \textit{Proc. of the Royal Soc. of Edinburgh.}, \textbf{129A(1)} (1999), 19--36.

\bibitem{CafVas}
\textsc{Caffarelli, L. \& Vasseur, A.} Drift diffusion equations with fractional diffusion and the quasi-geostrophic equation. \textit{Ann. Math.} \textbf{171} (2010), 1903--1930.

\bibitem{CCL}
\textsc{Carlen, E., Carvalho, M. \& Lu, X.} On strong convergence to equilibrium for the Boltzmann equation with soft potentials. \textit{J. Stat. Phys.} \textbf{135} (2009), 681--736.

\bibitem{DL}
\textsc{Dautray, R. \& Lions, J--L.}  Mathematical Analysis and Numerical Methods for Science and Technology, \textit{Berlin--Springer}, \textbf{6} (1993), 209--408.

\bibitem{HG}
\textsc{Henyey, L. G. \& Greenstein, J. L.} Diffuse radiation in the galaxy. \textit{Astrophys. J.}, (1941), 70--83.

\bibitem{LK}
\textsc{Larsen, E. W.  \& Keller, J. B.} Asymptotic solution of neutron transport problems for small
mean free paths. \textit{J. Math. Phys.}, \textbf{15} (1974), 75--81.

\bibitem{LL}
\textsc{Leakeas, C. L. \& Larsen, E. W.} Generalized Fokker Planck approximations of particle
transport with highly forward-peaked scattering. \textit{Nucl. Sci. Eng.}, 1\textbf{37} (2001), 236--250.

\bibitem{P}
\textsc{Pomraning, G. C. } The Fokker-Planck operator as an asymptotic limit. \textit{Math. Models Methods Appl. Sci.}, \textbf{2} (1992), 21--36.

\bibitem{QW}
\textsc{Qiwei, S. \& Weimin, H.} Well-posedness of the FokkerÐPlanck equation in a scattering process. \textit{Jour. Math. Anal. Appl.}, \textbf{406} (2013), 531--536.

\end{thebibliography}
\end{document}